\numberwithin{equation}{section}
\theoremstyle{plain} 
\newtheorem{thm}{Theorem}[section]
\newtheorem{lem}[thm]{Lemma}
\newtheorem{prop}[thm]{Proposition}
\newtheorem{ex}[thm]{Example}
\theoremstyle{definition} 
\theoremstyle{remark} 
\newtheorem{oss}{Remark}
\newcommand{\eps}{\varepsilon}
\providecommand{\keywords}[1]{\textbf{Keywords and Phrases:} #1}
\providecommand{\classification}[1]{\textbf{AMS Classification:}#1}
\DeclareMathOperator{\Var}{Var}
\newtheorem{ass}{Assumption}
\begin{document}

\author{{Anna Paola Todino}
\\
Gran Sasso Science Institute, L'Aquila\\
\small annapaola.todino@gssi.it}
\date{}
\title{A Quantitative Central Limit Theorem for the Excursion Area of Random
Spherical Harmonics over Subdomains of $\mathbb{S}^{2}$ }
\maketitle

\begin{abstract}
\noindent In recent years, considerable interest has been drawn by the
analysis of geometric functionals for the excursion sets of random
eigenfunctions on the unit sphere (spherical harmonics). In this paper, we
extend those results to proper subsets of the sphere $\mathbb{S}^{2}$, i.e.,
spherical caps, focussing in particular on the excursion area. Precisely, we show that the asymptotic behaviour of the excursion area is
dominated by the so-called second-order chaos component, and we exploit this
result to establish a Quantitative Central Limit Theorem, in the high energy
limit. These results generalize analogous findings for the full sphere;
their proofs, however, requires more sophisticated techniques, in particular
a careful analysis (of some independent interest) for smooth approximations
of the indicator function for spherical caps subsets.
\end{abstract}

\begin{itemize}
\item \keywords{} Gaussian Eigenfunctions, Spherical Harmonics, Excursion Area,
Quantitative Central Limit Theorem, Wiener-chaos expansion, Clebsch-Gordan coefficients.

\item \classification{} 42C10, 33C55, 60B10.
\end{itemize}

\section{Introduction and background results}
Let $\mathbb{S}^{2}$ be the unit 2-dimensional sphere and consider the
Helmholtz equation 
\begin{equation*}
\Delta _{\mathbb{S}^{2}}T_{\ell }+\lambda _{\ell }T_{\ell }=0,\mbox{ }\mbox{
}\mbox{ }T_{\ell }:\mathbb{S}^{2}\rightarrow \mathbb{R},
\end{equation*}%
where $\Delta _{\mathbb{S}^{2}}$ is the Laplace-Beltrami operator on $%
\mathbb{S}^{2}$, defined as usual as
\begin{equation*}
\dfrac{1}{\sin \theta }\dfrac{\partial }{\partial \theta }\bigg\{\sin \theta 
\dfrac{\partial }{\partial \theta }\bigg\}+\dfrac{1}{\sin ^{2}\theta }\dfrac{%
\partial ^{2}}{\partial \varphi ^{2}},\mbox{ }0\leq \theta \leq \pi ,\mbox{ }%
0\leq \varphi < 2\pi ,
\end{equation*}%
and $\lambda _{\ell }=\ell (\ell +1),\mbox{ }\ell =0,1,\dots $. For a given
eigenvalue $\lambda _{\ell },$ the corresponding eigenspace is the $(2\ell
+1)-$dimensional space of spherical harmonics of degree $\ell .$ A standard,
complex-valued $L^{2}$ basis $\{Y_{\ell m}(\cdot )\}_{m=-\ell ,\dots ,\ell }$
can be defined as (see \cite{M e Peccati} p. 64)%
\begin{eqnarray*}
&Y_{\ell m}(\theta ,\varphi ) &:=\sqrt{\frac{2\ell +1}{4\pi }}\sqrt{\frac{%
(\ell -m)!}{(\ell +m)!}}P_{\ell m}(\cos \theta )\exp (im\varphi )\text{ ,
for }m\geq 0\text{ ,} \\
&Y_{\ell m}(\theta ,\varphi ) &:=(-1)^{m}\overline{Y}_{\ell ,-m}(\theta
,\varphi )\text{ , for }m<0\text{ ,}
\end{eqnarray*}%
where $P_{\ell m}(.)$ denotes the associated Legendre functions. We can
hence consider random eigenfunctions of the form 
\begin{equation*}
T_{\ell }(x)=\sqrt{\dfrac{4\pi }{2\ell +1}}\sum_{m=-\ell }^{\ell }a_{\ell
m}Y_{\ell m}(x),
\end{equation*}%
where the coefficients $\{a_{\ell m}\}$ are independent, safe for the condition $a_{\ell
	m}=(-1)^{m}\overline{a}_{\ell ,-m}$; for $m\neq 0$ they are
standard complex-valued Gaussian variables,  while $a_{\ell 0}$ is a standard
real-valued Gaussian variable. The random fields $\{T_{\ell }(x),x\in \mathbb{S}^{2}\}
$ are Gaussian and isotropic, namely the probability laws of $T_{\ell
}(\cdot )$ and $T_{\ell }(g\cdot )$ are the same for any rotation $g\in
SO(3).$ 
Also, we have that 
\begin{eqnarray*}
\mathbb{E}[T_{\ell }(x)] &=&0,\mbox{ and }\mathbb{E}[T_{\ell }(x)^{2}]=1%
\text{ ,} \\
\mathbb{E}[T_{\ell }(x)T_{\ell }(y)] &=&P_{\ell }(\cos d(x,y)),
\end{eqnarray*}%
where $P_{\ell }$ are the Legendre polynomials and $d(x,y)$ is the spherical
geodesic distance between $x$ and $y,$ i.e. 
\begin{equation*}
d(x,y)=\arccos (\langle x,y\rangle ).
\end{equation*}%
The analysis of random eigenfunctions on the sphere or on other compact
manifolds (such as the torus) has been recently considered in many papers, due
to strong motivations arising from Cosmology and Quantum Mechanics, see
i.e., \cite{M e Peccati}, \cite{13}, \cite{21}, \cite{RW} and \cite{RWY}. Many papers have focussed on the geometry of the $z$%
-excursion sets, which are defined for $z\in \mathbb{R}$ as 
\begin{equation*}
A_{z}(\ell ):=A_{z}(T_{\ell },\mathbb{S}^{2}):=\{x\in \mathbb{S}^{2}:T_{\ell
}(x)>z\},
\end{equation*}%
see for instance \cite{M e W 2011}, \cite{M e W 2012}, \cite{M e W 2011bis}, 
\cite{mau}. More precisely, a natural tool to characterize the geometry of $%
\left\{ A_{z}(T_{\ell },\mathbb{S}^{2})\right\} $ is provided by the so-called
Lipschtz-Killing curvatures (see i.e., \cite{Adler e Taylor} p. 175), which in the
2-dimensional case correspond to the area of $A_{z}(\ell )$ (which we shall
write as $\mathcal{L}_{2}(A_{z}(T_{\ell },\mathbb{S}^{2})),$ (half) the
boundary length $\partial A_{z}(\ell )$ (i.e., the length of level curves $%
T_{\ell }^{-1}(z),$ written $\mathcal{L}_{1}(A_{z}(T_{\ell },\mathbb{S}%
^{2})))$, and their Euler-Poincaré characteristic, i.e., the difference
between the number of connected regions and the number of ``holes" (written $%
\mathcal{L}_{0}(A_{z}(T_{\ell },\mathbb{S}^{2}))$).

In order to characterize the stochastic properties of these functionals, the
first step clearly is the evaluation of their expected values. This goal can
be achieved by means of the celebrated Gaussian Kinematic Formula (see \cite%
{Adler e Taylor} chapter 13, Theorem 13.2.1), which yields, respectively, 
\begin{equation*}
\mathbb{E}[\mathcal{L}_{0}(A_{z}(T_{\ell },\mathbb{S}^{2}))]=2\{1-\Phi (z)\}+%
\frac{\lambda _{\ell }}{2}\frac{ze^{-z^{2}/2}}{\sqrt{(2\pi )^{3}}}4\pi ,
\end{equation*}%
for the Euler-Poincaré characteristic, 
\begin{equation*}
\mathbb{E}[\mathcal{L}_{1}(A_{z}(T_{\ell },\mathbb{S}^{2}))]=\pi \times 
\sqrt{\frac{\lambda _{\ell }}{2}}e^{-z^{2}/2},
\end{equation*}%
for (half) the boundary length, and 
\begin{equation*}
\mathbb{E}[\mathcal{L}_{2}(A_{z}(T_{\ell },\mathbb{S}^{2}))]=4\pi \times
\{1-\Phi (z)\},
\end{equation*}%
for the excursion area; note that $\frac{\lambda _{\ell }}{2}=\frac{\ell
(\ell +1)}{2}=P_{\ell }^{\prime }(1)$ represents the derivative of the
covariance function at the origin. Actually, in the Gaussian Kinematic Formula, the LKC are computed on the entire manifold and it depends only on metric properties, i.e., if the metric is scaled by $\lambda$, the LKC scales by $\lambda^k$. Hence, here, $\frac{\lambda_\ell}{2} $ is this scaling factor.  \newline

\bigskip The next step in the investigation of the random properties for
these functionals is the derivation of their variances and hence their limiting distributions. A crucial step to achieve these
results is to note that all these statistics can be written as nonlinear
functionals of the random fields itself and their spatial derivatives. For
instance, the excursion area can be expressed by 
\begin{equation*}
S_{\ell }(z)=\int_{\mathbb{S}^{2}}1_{\{z,+\infty \}}(T_{\ell }(x))\,dx,
\end{equation*}%
where $1_{A}(\cdot )$ is, as usual, the indicator function of the set $%
A,$ which takes value one if the condition in the argument is satified, zero
otherwise; likewise, using a Kac-Rice argument (see \cite{Azais e ws} Chapter 6, \cite%
{Adler e Taylor} Chapter 11) the length of level curves can be written as 
\begin{equation*}
\mathcal{L}_{\ell }(z)=\int_{\mathbb{S}^{2}}\delta _{z}(T_{\ell
}(x))||\nabla T_{\ell }(x)||\,dx,
\end{equation*}%
and a related expression can be given for the Euler-Poincaré characteristic
(see \cite{C M }). Starting from these expressions, it is possible to compute
explicitly the expansion of Lipschitz-Killing curvatures into the
orthonormal system generated by Hermite polynomials; for instance, in the
case of the excursion area it can be readily shown that (see \cite{M e W
2011}, \cite{M e W 2012}, \cite{M e Mau 2015})
\begin{equation*}
S_{\ell }(z)=\sum_{q=0}^{\infty }\dfrac{J_{q}(z)}{q!}\int_{\mathbb{S}%
^{2}}H_{q}(T_{\ell }(x))\,dx\text{,}
\end{equation*}%
the equality holding in the $L^{2}(\Omega )$-sense; we recall here the
standard definition of Hermite polynomials, i.e., $H_{0}\equiv 1$ and, for $q\geq
1,$ (see for instance \cite{Peccati Nourdin}) 
\begin{equation*}
H_{q}(x)=(-1)^{q}e^{\frac{x^{2}}{2}}\frac{d^{q}}{dx^{q}}e^{-\frac{x^{2}}{2}}%
\mbox{,  }x\in \mathbb{R}.
\end{equation*}%
The coefficients $\{J_{q}(\cdot )\}$ have the analytic expressions $%
J_{0}(z)=\Phi (z),$ $J_{1}(z)=-\phi (z),$ $J_{2}(z)=-z\phi (z),$ $%
J_{3}(z)=(1-z^{2})\phi (z)$ and in general 
\begin{equation*}
J_{q}(z)=-H_{q-1}(z)\phi (z),\text{ }q=1,2,3...
\end{equation*}%
where $\phi (\cdot )$ and $\Phi (\cdot )$ are the density function and the
distribution function of a standard Gaussian variable (\cite{M e W 2011}, 
\cite{M e W 2012}). As in \cite{M e W 2012}, we denote 
\begin{equation}
h_{\ell ,q}:=\int_{\mathbb{S}^{2}}H_{q}(T_{\ell }(x))\,dx\mbox{ }\mbox{ }%
q=1,2,\dots ,  \label{lug1}
\end{equation}%
and we can hence write 
\begin{equation}
S_{\ell }(z)=\sum_{q=0}^{\infty }\dfrac{J_{q}(z)}{q!}h_{\ell ;q}\mbox{ in }%
L^{2}(\Omega ).  \label{lug2}
\end{equation}%
It can be readily verified that the term corresponding to $q=1$ in (\ref%
{lug1}), (\ref{lug2}) are identically equal to zero for every $\ell \geq 1$;
indeed we have that%
\begin{equation}
h_{\ell,1 }:=\int_{\mathbb{S}^{2}}\sqrt{\dfrac{4\pi }{2\ell +1}}%
\sum_{m=-\ell }^{\ell }a_{\ell m}Y_{\ell m}(x)dx=\sqrt{\dfrac{4\pi }{2\ell +1%
}}\sum_{m=-\ell }^{\ell }a_{\ell m}\int_{\mathbb{S}^{2}}Y_{\ell m}(x)dx=0.
\label{lug3}
\end{equation}%
The crucial step in \cite{M e W 2011}, \cite{M e Mau 2015} is then to show
that a single term, corresponding to $q=2,$ has asymptotically (in the
high-energy regime $\ell \rightarrow \infty $) a dominating role in the
expansion, i.e.,%
\begin{equation*}
\Var(S_{\ell })=\left\{ \frac{J_{2}(z)}{2}\right\} ^{2}\Var(h_{\ell
;2})+o(\Var(S_{\ell })),\text{ as }\ell \rightarrow \infty \text{ ,}
\end{equation*}%
so that both the asymptotic variance and the Central Limit Theorem can be
established by simply considering the behaviour of this single term. Similar
expansions can be derived for the boundary length and the Euler-Poincaré
characteristic (see \cite{C M }, \cite{MRW2017}, \cite{W}, \cite{BW2016}), thus
leading to the following asymptotic expressions for the variances (see %
\cite{C M }, \cite{M e W 2012}, \cite{M e Mau 2015}, \cite{mau}):%
	\begin{equation*}
\begin{split}
&\lim_{\ell \rightarrow \infty} \ell^{-3} \Var(\mathcal{L}_{0}(A_{z}(T_{\ell },\mathbb{S}^{2})))=\dfrac{1}{4}%
(H_{3}(z)+H_{2}^{\prime }(z))^{2}\phi (z)^{2},\\
&
\lim_{\ell \rightarrow \infty}\ell^{-1}\mbox{Var}(\mathcal{L}_{1}(A_{z}(T_{\ell },\mathbb{S}^{2})))=\dfrac{\pi^3}{2} (H_{2}(z)+H_{1}^{\prime }(z))^2\phi (z)^{2}
\\&
\lim_{\ell \rightarrow \infty}\ell \mbox{Var}\lbrack \mathcal{L}_{2}(A_{z}(T_{\ell },\mathbb{S}^{2}))]= 4\pi^2(H_{1}(z)+H_{0}^{\prime }(z))^{2}\phi (z)^{2}.
\end{split}
\end{equation*}
See also \cite{11}, \cite{35}, \cite{M e Mau 2015}, \cite{W}, \cite{C}, \cite{P-R}, \cite{N-P-R}, \cite{MPRW}, \cite{DNPR} for related results on the
torus and on the plane, and \cite{MRW2017}, \cite{JW}, \cite{C M }, \cite{M e
Mau 2015}, \cite{M e W 2011}, \cite{RW}, \cite{RWY} for other works concerning the geometry of
random eigenfunctions on compact manifolds.
A common features of all these statistics is the disappearance of the
leading term at the zero level $z=0$ (the so-called Berry's cancellation
phenomenon, investigated in \cite{W}, \cite{M e W 2011bis}, \cite{35}). In
the case of the excursion area, at $z=0$ all the odd-order chaoses become
relevant, and the Central Limit Theorem can be established as in \cite%
{MRW2017}, \cite{C M }, \cite{MPRW}. For other functionals (nodal length), at 
$z=0$ the fourth order chaos plays the role of the dominant term.
Along the same lines, it has been possible to establish Quantitative Central
Limit Theorems for the asymptotic fluctuations in the high-energy regime. To
report these results, we need to introduce some more notation. Recall that
the Wasserstein $d_{W}$ distance between random variables $Z,N$ is defined by
\begin{equation*}
d_{W}(Z,N):=\sup_{h\in Lip(1)}|\mathbb{E}[h(Z)]-\mathbb{E}[h(N)]|
\end{equation*}%
where $Lip(1)$ denotes the set of Lipschitz functions with bounding constant
equal to 1. It should be noted that the functionals $\left\{ h_{\ell
;q}\right\} $ belong to the so-called Wiener chaoses of order $q,$ defined
as the space spanned by linear combinations of Hermite polynomials of order $%
q;$ as such, they belong to the domain of application for the so-called
Stein-Malliavin method, leading to very neat characterizations for
Quantitative Central Limit Theorems (see i.e., \cite{NP2009}, \cite{Peccati
Nourdin}). More precisely, we have that (Theorem 5.2.6, p. 99 \cite{Peccati
Nourdin}) 
\begin{equation}
d_{W}\bigg(\dfrac{h_{\ell ;q}}{\sqrt{\Var(h_{\ell ;q})}},Z \bigg)\leq 2%
\sqrt{\dfrac{q-1}{3q}\bigg(\dfrac{cum_{4}(h_{\ell ;q})}{\Var^{2}(h_{\ell ;q})}%
\bigg)},  \label{N-P}
\end{equation}%
where $Z \sim \mathcal{N}(0,1)$ and $cum_{4}(Y):=\mathbb{E}Y^{4}-3\mathbb{E}Y^2$ denotes the fourth-order cumulant of a random variable
Y. In words, this means that in these circumstances to prove a Quantitative
Central Limit Theorem for standardized sequences it is enough to show that
their fourth-order moment goes to 3. This approach was used to establish
Quantitative Central Limit Theorems in \cite{M e W 2011},\cite{M e Mau 2015}
(see also \cite{M e W 2012}, \cite{MRW2017}, \cite{C M }, \cite{MPRW}), i.e.,
for $z\neq 0$, 
\begin{equation}\label{tlc-sfera}
d_{W}\bigg(\dfrac{S_{\ell }(z)-\mathbb{E}[S_{\ell }(z)]}{\sqrt{\Var(S_{\ell
}(z))}},Z\bigg)=O\big(\ell ^{-1/2}\big),
\end{equation}%
as $\ell \rightarrow \infty ,$ entailing as a Corollary that 
\begin{equation*}
\dfrac{S_{\ell }(z)-\mathbb{E}[S_{\ell }(z)]}{\sqrt{\Var(S_{\ell }(z))}}%
\rightarrow _{d}Z,\mbox{ }z\neq 0
\end{equation*}%
$d$ denoting the convergence in distribution.

\subsection{Main Result}
In this paper we extend and generalize some of the previous results,
considering the case of the excursion area evaluated on a spherical cap
rather than the full sphere. More precisely, we shall focus on a symmetric
spherical cap $B$ of radius $r<\pi ,$ which we take without loss of
generality to be centred around the North Pole $N=(0,0),$ i.e.,%
\begin{equation}\label{Bdef}
B=\left\{ x\in \mathbb{S}^{2}:0\leq \theta _{x}\leq r,\text{ }0\leq \varphi
_{x}\leq 2\pi \right\} \text{.}
\end{equation}%
We shall then consider the excursion set 
\begin{equation*}
A_{z}(T_{\ell },B)=\{x\in B:T_{\ell }(x)>z\},
\end{equation*}%
and in particular the excursion area 
\begin{equation*}
S_{\ell }(B,z):=\int_{B}1_{\{T_{\ell }(x)>z\}}(T_{\ell }(x))\,dx\text{.}
\end{equation*}%
Note that the mean value of the excursion area is simply given by $$\mathbb{E}[S_\ell(B,z)]= \int_B \mathbb{E}[1_{\{T_\ell(x)>z\}}(T_\ell(x))] \,dx=(1-\phi(z)) m(B),$$
where $m(B)$ is the measure of $B$.\\
Our main result is a Quantitative Central Limit Theorem of the form
\begin{thm}
\label{tlc} For every $z\neq 0$, as $\ell \rightarrow \infty ,$ we have that 
\begin{equation*}
d_{W}\bigg(\frac{S_{\ell }(B,z)-\mathbb{E}[S_{\ell }(B,z)]}{\sqrt{\Var(S_{\ell }(B,z))}},Z\bigg)=O%
\bigg(\frac{1}{\sqrt{\ell }}\bigg),
\end{equation*}
where $Z \sim \mathcal{N}(0,1)$.
\end{thm}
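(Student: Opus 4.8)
The plan is to follow the Wiener-chaos strategy used on the full sphere in \cite{M e W 2012}, \cite{M e Mau 2015}, adapting every step to the subdomain $B$. Writing
\[
h_{\ell;q}(B)=\int_B H_q(T_\ell(x))\,dx,\qquad
S_\ell(B,z)=\sum_{q=0}^\infty\frac{J_q(z)}{q!}\,h_{\ell;q}(B)\ \text{ in }L^2(\Omega),
\]
the first task is a variance analysis isolating a single dominating chaotic projection. By isotropy and the identity $\mathbb{E}[H_q(T_\ell(x))H_q(T_\ell(y))]=q!\,P_\ell(\cos d(x,y))^q$ one has $\Var(h_{\ell;q}(B))=q!\int_B\int_B P_\ell(\cos d(x,y))^q\,dx\,dy$. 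In contrast with the full sphere, the linear term no longer vanishes: the rotational symmetry of $B$ about the pole kills every $m\neq0$ contribution and leaves $h_{\ell;1}(B)$ proportional to $a_{\ell 0}\int_0^r P_\ell(\cos\theta)\sin\theta\,d\theta$. Using $\int_{\cos r}^1 P_\ell(t)\,dt=\frac{P_{\ell-1}(\cos r)-P_{\ell+1}(\cos r)}{2\ell+1}$ and the Hilb asymptotics $P_n(\cos r)=O(n^{-1/2})$, this coefficient is $O(\ell^{-3/2})$, whence $\Var(h_{\ell;1}(B))=O(\ell^{-3})$ is negligible. I would then establish $\Var(h_{\ell;2}(B))\sim c_B/\ell$ with $c_B>0$ and show that the chaoses of order $q\ge3$ contribute only $o(\ell^{-1})$; since $J_2(z)=-z\phi(z)\neq0$ precisely for $z\neq0$, this yields
\[
\Var(S_\ell(B,z))=\Big(\tfrac{J_2(z)}{2}\Big)^2\Var(h_{\ell;2}(B))+o(\ell^{-1})\sim\frac{C_z}{\ell},\qquad C_z>0,
\]
which also explains the hypothesis $z\neq0$.

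With the dominant term identified, the quantitative CLT reduces to the second chaos: by the triangle inequality for $d_W$ and the domination of the Wasserstein distance between normalized variables by their $L^2(\Omega)$-distance, orthogonality of distinct chaoses together with the variance dominance above reduce the problem to bounding $d_W\big(h_{\ell;2}(B)/\sqrt{\Var(h_{\ell;2}(B))},Z\big)$. For this I invoke the fourth-moment bound (\ref{N-P}) with $q=2$,
\[
d_W\bigg(\frac{h_{\ell;2}(B)}{\sqrt{\Var(h_{\ell;2}(B))}},Z\bigg)\le\frac{2}{\sqrt6}\sqrt{\frac{\cum_4(h_{\ell;2}(B))}{\Var(h_{\ell;2}(B))^2}},
\]
so that, as $\Var(h_{\ell;2}(B))^2\sim(c_B/\ell)^2$, the announced rate $O(\ell^{-1/2})$ follows from the single estimate $\cum_4(h_{\ell;2}(B))=O(\ell^{-3})$.

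It remains to prove this cumulant bound together with the variance asymptotics, and here the subdomain geometry makes the analysis genuinely harder than on $\mathbb{S}^2$. Representing $T_\ell(x)=I_1(e_x)$ with $\langle e_x,e_y\rangle=P_\ell(\cos d(x,y))$, so that $h_{\ell;2}(B)=I_2(f_\ell)$ with $f_\ell=\int_B e_x^{\otimes2}\,dx$ (with $I_2$ the double Wiener--It\^o integral), the fourth cumulant equals $48\|f_\ell\otimes_1 f_\ell\|^2$, i.e. the loop integral $48\int_{B^4}\prod_{i=1}^{4}P_\ell(\cos d(x_i,x_{i+1}))\prod_{i=1}^{4}dx_i$ (indices mod $4$). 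On the full sphere this is exactly $O(\ell^{-3})$, by applying twice the reproducing identity $\int_{\mathbb{S}^2}P_\ell(\cos d(x,z))P_\ell(\cos d(z,y))\,dz=\frac{4\pi}{2\ell+1}P_\ell(\cos d(x,y))$; the crux is that this identity breaks down once the inner integrations are confined to $B$. To handle the cap I would expand the indicator $1_B$ in spherical harmonics and reduce the resulting integrals of triple products of $Y_{\ell m}$'s to Clebsch--Gordan coefficients; because $1_B$ is only piecewise smooth these expansions converge slowly, so the technical heart of the argument is a careful smooth approximation $\chi_\varepsilon\to1_B$ whose coefficient decay makes the Clebsch--Gordan sums summable and of the correct order in $\ell$, with the approximation error controlled uniformly in $\ell$. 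This boundary-layer analysis is also what supplies the matching lower bound $\Var(h_{\ell;2}(B))\ge c/\ell$, and it is the step I expect to be the main obstacle. For the cumulant \emph{upper} bound there is moreover a clean shortcut: writing $T_B=P_B\Pi_\ell P_B$ for the compression to $B$ of the orthogonal projection onto the degree-$\ell$ eigenspace, the loop integral equals $\big(\tfrac{4\pi}{2\ell+1}\big)^4\operatorname{Tr}(T_B^4)$, and since $0\le T_B\le I$ we have $\operatorname{Tr}(T_B^4)\le\operatorname{Tr}(T_B^2)=O(\ell)$, giving $\cum_4(h_{\ell;2}(B))=O(\ell^{-3})$ at once. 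Combining the cumulant estimate with the variance asymptotics then yields the stated bound $O(\ell^{-1/2})$.
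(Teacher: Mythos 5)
Your proposal follows the same global architecture as the paper's proof --- the $L^{2}$ expansion into Wiener chaoses, the identification of the second chaotic projection as the dominant term, and the fourth-moment theorem applied to $h_{\ell;2}(B)$ --- but two of the key technical steps are carried out by genuinely different, and in fact simpler, arguments. For the first chaos the paper builds a $C^{M}$ mollifier $1_{B,\eps}$, controls its Legendre coefficients (Theorem \ref{mollifier}), and obtains only $\Var(h_{\ell;1}(B))=o(1/\ell)$ under the coupling condition (\ref{eps cond}) between $\eps$ and $\ell$; your observation that the axial symmetry of the cap kills every $m\neq 0$ mode, so that $h_{\ell;1}(B)$ is an explicit multiple of $a_{\ell 0}\int_{\cos r}^{1}P_\ell(t)\,dt=a_{\ell 0}\,\frac{P_{\ell-1}(\cos r)-P_{\ell+1}(\cos r)}{2\ell+1}$, gives the sharper bound $\Var(h_{\ell;1}(B))=O(\ell^{-3})$ with no mollifier at all. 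For the fourth cumulant the paper performs a long computation with Gaunt integrals, Clebsch--Gordan coefficients and Wigner $6j/9j$ symbols (the proof of Proposition \ref{4cum} in the Appendix); your operator argument --- the loop integral equals $\big(\tfrac{4\pi}{2\ell+1}\big)^{4}\operatorname{Tr}(T_B^{4})$ with $T_B=1_B\Pi_\ell 1_B$, and $0\le T_B\le I$ forces $\operatorname{Tr}(T_B^{4})\le\operatorname{Tr}(T_B^{2})\le\operatorname{Tr}(\Pi_\ell)=2\ell+1$ --- is correct, works for an arbitrary measurable $B$, and replaces that entire computation.

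The one genuine gap is the step you yourself flag: the two-sided estimate $\Var(h_{\ell;2}(B))\asymp 1/\ell$ (whose lower bound is what enters all the denominators) is asserted, with a plan to prove it by the same mollifier/Clebsch--Gordan route as the paper (Lemmas \ref{lemma1} and \ref{lemma2}), but not carried out. Note, however, that your own operator framework closes it at once: $\Var(h_{\ell;2}(B))=2\big(\tfrac{4\pi}{2\ell+1}\big)^{2}\operatorname{Tr}(T_B^{2})$, and since $T_B$ has rank at most $2\ell+1$ while $\operatorname{Tr}(T_B)=\tfrac{2\ell+1}{4\pi}\,m(B)$, Cauchy--Schwarz on the eigenvalues gives $\operatorname{Tr}(T_B^{2})\ge(\operatorname{Tr}T_B)^{2}/(2\ell+1)=\tfrac{(2\ell+1)\,m(B)^{2}}{(4\pi)^{2}}$, hence $\Var(h_{\ell;2}(B))\ge 2\,m(B)^{2}/(2\ell+1)$, with the matching upper bound from $\operatorname{Tr}(T_B^{2})\le 2\ell+1$; so no mollifier is needed here either. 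One further caveat, which your reduction shares with the paper's final argument: replacing $S_\ell(B,z)$ by its second chaotic projection costs $\sqrt{\Var(h_{\ell;3}(B))/\Var(S_\ell(B,z))}$ in Wasserstein distance, and the bound $\Var(h_{\ell;3}(B))=O(\sqrt{\log\ell}\,\ell^{-3/2})$ obtained by putting absolute values inside $\int_{B^{2}}P_\ell^{3}$ only yields a contribution of order $(\log\ell)^{1/4}\ell^{-1/4}$; to actually reach the stated rate $O(\ell^{-1/2})$ one needs $\Var(h_{\ell;3}(B))=O(\ell^{-2})$, i.e.\ some cancellation in the third chaos must be exploited rather than merely showing it is $o(\ell^{-1})$.
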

The main steps in the proof of this result are described in the next
section. We anticipate that our main ideas are broadly similar to those
previously exploited in the related literature: namely, we compute the $L^{2}$-expansion into Hermite polynomials and we show that the second
order term is the dominating one. Along these similarities, we stress
however that there exist as well very important differences, which we list
below as follows:
\begin{itemize}
\item While the first-order chaos term is identically zero in the case of
the full sphere (see (\ref{lug3})), this result does no longer hold on
subdomains and a careful analysis is needed to show that the corresponding
terms are of lower stochastic order. Here we shall also require the
properties of a smooth approximation for the indicator function of the
spherical cap, whose construction is of some independent interest (Section \ref{mol}).

\item The second-order chaos term is still the leading one in the $L^{2}$
expansion, and it decays to zero with the same rate $\ell ^{-1}$ as in the
full spherical case. However, the normalizing constants are different, and
they can be given a natural interpretation as the relative area of the
region under consideration.

\item It is still possible to show that a (Quantitative) Central Limit
Theorem holds. However the proof is entirely different from the one
exploited in the case of the full sphere, and indeed much more challenging.
In fact, due to Parseval's identity, in the case of the full sphere the
second-order chaos boils down to a simple sum of independent and identically
distributed random variables, so that the Central Limit Theorem, even in its
Quantitative version, is almost immediate. Here, on the contrary, these
identities no longer hold, and it thus becomes necessary to exploit the full
power of Stein-Malliavin results (see \cite{Peccati Nourdin} and \cite{NP2009}) by means of a careful computation of
fourth-order cumulants. In particular, the latter result requires the
investigation of complex cross-sums of so-called Clebsch-Gordan coefficients
(see \cite{quantum theory}, \cite{M e Peccati}), which arise from integrals
of multiple products of spherical harmonics. Finally,
it is remarkable that the asymptotic rate of convergence in the Quantiative
Central Limit Theorem turns out to be identical to the full spherical case.

\item It remains true that the leading term in the variance expansion
vanishes in the ``nodal" case $z=0,$ i.e., some form of the Berry's
cancellation phenomenon (see \cite{Berry1977}, \cite{mau}, \cite{W}) applies
to subdomains of the sphere as well.
\end{itemize}

\subsection{Plan of the paper}
In Section 2 we briefly explain the ideas of the proof of the main result, while Section 3 discusses the construction of a smooth
approximation to the indicator function and its asymptotic properties. The
proof of the Central Limit Theorem is given in Section 4, where the
asymptotic behavior of the chaotic components of the excursion area are investigated.
Further technical computations are collected in the Appendix.

In the sequel, given any two positive sequence $a_{n},b_{n}$, we shall write 
$a_{n}\sim b_{n}$ if $\lim_{n\rightarrow \infty }a_{n}/b_{n}=1.$ Also we
shall write $a_{n}\ll b_{n}$ or $a_{n}=O(b_{n})$ when the sequence $%
a_{n}/b_{n}$ is asymptotically bounded.

\subsection{Acknowledgements}
This topic was suggested by Domenico Marinucci and the author would like to
thank him for this and for all the suggestions, the discussions and the
useful comments. I would like to thank the departement of Mathematics of the
University of Rome \textquotedblleft Tor Vergata", where part of the
research was done, for the warm ospitality. Finally, thanks to Igor Wigman and Maurizia
Rossi for many insights and useful discussions. 

\section{On the proof of the main result}
From now on $B$ will denote the spherical cap defined in (\ref{Bdef}).
As mentioned earlier, in order to study the excursion area, we start by writing it as the
functional 
\begin{equation*}
S_{\ell }(B,z)=\int_{B}1_{\{T_{\ell }(x)>z\}}(T_{\ell }(x))dx
\end{equation*}%
and then, exploiting the $L^{2}$-expansion into Wiener chaoses, we have
\begin{equation}\label{jen}
1_{\{T_\ell(x) \leq z\}}(T_\ell(x))=\sum_{q=0}^{\infty} \frac{J_q(z)}{q!}
H_q(T_\ell(x)),
\end{equation}
meaning that 
\begin{equation*}
\lim_{Q \rightarrow \infty} E[|\sum_{q=0}^{Q} \dfrac{J_q(z)}{q!}
H_q(T_\ell(x))-1_{\{T_\ell(x)\leq z \}}(T_\ell(x))|^2]=0.
\end{equation*}
Because of the linearity of the integral and Jensen inequality, one has 
\begin{equation*}
\int_{B}1_{\{T_{\ell }(x)\geq z\}}(T_\ell(x))\,dx=\lim_{Q\rightarrow \infty
}\sum_{q=0}^{Q}\dfrac{J_{q}(z)}{q!}\int_{B}H_{q}(T_{\ell
}(x))\,dx=\lim_{Q\rightarrow \infty }\sum_{q=0}^{Q}\dfrac{J_{q}(z)}{q!}%
h_{\ell ;q}(B)
\end{equation*}%
where
\begin{equation*}
h_{\ell ;q}(B):=\int_{B}H_{q}(T_{\ell }(x))\,dx;
\end{equation*}%
Indeed, 
$$\mathbb{E} \bigg[ \bigg| \sum_{q=0}^{Q} \frac{J_q(z)}{q!} \int_B H_q(T_\ell(x)) \,dx-\int_B 1_{\{T_\ell(x)\leq z\}}(T_\ell(x)) \bigg|^2\bigg] \leq \mathbb{E} \bigg[ \int_B \bigg| \sum_{q=0}^{Q} \frac{J_q(z)}{q!} H_q(T_\ell(x)) \,dx- 1_{\{T_\ell(x)\leq z\}}(T_\ell(x)) \bigg|^2 \,dx \bigg] $$
which goes to zero for (\ref{jen}).
We can hence write
\begin{equation}\label{serie}
\begin{split}
\int_{B}1_{\{T_{\ell }(x)>z\}}(T_\ell(x))\,dx& =\int_{B}\left\{ 1-\Phi (z)\right\}
dx+\int_{B}\phi (z)H_{1}(T_{\ell }(x))dx+\int_{B}z\phi (z)\frac{1}{2}%
H_{2}(T_{\ell }(x))dx \\
& +\int_{B}\sum_{q=3}^{\infty }\dfrac{J_{q}(z)}{q!}H_{q}(T_{\ell }(x))\,dx,
\end{split}
\end{equation}%
in the $L^{2}(\Omega )-$convergence sense. The same holds for the variance
thanks to the continuity of the norm. Indeed 
\begin{equation}
\begin{split}
\Var\bigg(& \int_{B}1_{\{T_{\ell }(x)>z\}}(T_{\ell }(x))\,dx\bigg)=E\bigg[%
\bigg(\int_{B}1_{\{T_{\ell }(x)>z\}}(T_{\ell }(x))\,dx\bigg)^{2}\bigg]= \\
& =\left\langle \int_{B}1_{\{T_{\ell }(x)>z\}}(T_{\ell
}(x))\,dx,\int_{B}1_{\{T_{\ell }(x)>z\}}(T_{\ell }(x))\,dx\right\rangle
_{L^{2}(\Omega )}= \\
& =\lim_{Q\rightarrow \infty }\left\langle \sum_{q=0}^{Q}\dfrac{J_{q}(z)}{q!}%
\int_{B}H_{q}(T_{\ell }(x))\,dx,\sum_{q=0}^{Q}\dfrac{J_{q}(z)}{q!}%
\int_{B}H_{q}(T_{\ell }(x))\,dx\right\rangle.
\end{split}%
\end{equation}
Hence the following expansion holds in $L^{2}(\Omega )$ sense: 
\begin{equation}
\begin{split}
\mbox{Var}\bigg(\int_{B}1_{\{T_{\ell }(x)>z\}}(T_\ell(x))dx\bigg)& =0+\phi (z)^{2}%
\mbox{Var}\bigg(\int_{B}H_{1}(T_{\ell }(x))dx\bigg)+\dfrac{z^{2}\phi (z)^{2}%
}{4}\mbox{Var}\bigg(\int_{B}H_{2}(T_{\ell }(x))dx\bigg) \\
& +\mbox{Var}\bigg(\int_{B}\sum_{q=3}^{\infty }\dfrac{J_{q}(z)}{q!}%
H_{q}(T_{\ell }(x))dx\bigg).
\end{split}
\label{serie-var}
\end{equation}%
The Quantitative Central Limit Theorem is established by the analysis of the
asymptotic behaviour for each of these terms; here below we give a summary
of the results we obtained for the singular components. \\\\
In the sequel, we
shall need a continuous differentiable function, which we denote as $1_{B,\varepsilon }(x)$, for $\eps >0$, converging to the indicator function in $L^1(\mathbb{S}^2)$, as $\eps \rightarrow 0.$
\begin{oss}
	Along the framework we will refer to a particular smooth function, constructed in Section \ref{mol}. However, we would like to stress that the specific choice of the mollifier function is not important; more precisely, the fundamental issue is the behaviour of its Fourier coefficients: indeed, we need them to go to zero quite $``$fast$"$, in order to exchange integrals and series and to work with absolute convergent series. In Section \ref{mol} we give just an example of such a possible function.
\end{oss}
Here below, we summarize the conditions which $\eps$ has to satisfy in order to prove Theorem \ref{tlc}. Note that we take $\eps:=\eps_{\ell}$ as a sequence depending on $\ell$; nevertheless, we drop the $\ell$ whenever possible for notational simplicity.
\begin{ass}\label{cond-eps}
	Let us consider $1_{B,\varepsilon }(x)$ the smooth function constructed in Section \ref{mol}, then $\eps=\eps_{\ell}$ is such that
	\begin{equation}\label{cond2} 
	\ell^{\frac{3-M}{2M+1}}<\eps_\ell <\ell^{-\frac{1}{3}}.
	\end{equation}
	The parameter $M$ will be fixed below (see Section \ref{mol}).
\end{ass}
The condition on the left ensures the convergence of the Fourier coefficients $b_{\ell;\eps}$ of the Fourier expansion of $1_{B;\eps}(x)$, namely
\begin{equation}\label{fourier}
1_{B;\eps}(x)=\sum_\ell b_{\ell;\eps} Y_{\ell 0}(x),
\end{equation} and the absolutely convergence of (\ref{fourier}) (see (\ref{perche cond1})). Whereas, the bound on the right hand side in (\ref{cond2}) makes the first chaotic component smaller than the second one (see the proof of Proposition \ref{cor1chaos}).
\begin{ex}
	If we set for instance $\eps=\eps_{\ell}=\dfrac{1}{\ell^\alpha}$, with $\alpha>0, \alpha \in \mathbb{R}$, the lower bound in (\ref{cond2}) implies
	$$ \alpha<\dfrac{M-3}{2M+1} $$
	and the upper bound
	$$ \alpha>\dfrac{1}{3};$$
	hence, a sequence $\eps_{\ell}$ satisfying Assumption \ref{cond-eps} exists  taking $M>10$.
\end{ex}

\vspace{0.5cm}
Since $H_1(T_\ell(x))=T_\ell(x),$ we obtain, for the first chaotic component, the following proposition.

\begin{prop}
\label{cor1chaos} Let $B$ the spherical cap defined in (\ref{Bdef}), under the Assumption \ref{cond-eps}, the variance of the first chaotic component of (\ref{serie-var}) is 
\begin{equation*}
\Var \bigg(\int_{\mathbb{S}^{2}}1_{B}(x)T_{\ell }(x)\,dx\bigg)=o\bigg(\dfrac{1}{\ell }%
\bigg)
\end{equation*}%
as $\ell \rightarrow \infty .$
\end{prop}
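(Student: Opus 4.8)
The plan is to replace the indicator $1_{B}$ by the smooth approximation $1_{B,\varepsilon}$ constructed in Section~3 and to split
\[
\int_{\mathbb{S}^{2}} 1_{B}(x)T_{\ell}(x)\,dx = \int_{\mathbb{S}^{2}} 1_{B,\varepsilon}(x)T_{\ell}(x)\,dx + \int_{\mathbb{S}^{2}}\big(1_{B}-1_{B,\varepsilon}\big)(x)T_{\ell}(x)\,dx,
\]
so that, by $\Var(X+Y)\le 2\Var(X)+2\Var(Y)$, it suffices to control the two variances separately. For any deterministic $g\in L^{2}(\mathbb{S}^{2})$, isotropy yields $\Var\big(\int_{\mathbb{S}^{2}} g\,T_{\ell}\big)=\int_{\mathbb{S}^{2}}\int_{\mathbb{S}^{2}} g(x)g(y)P_{\ell}(\cos d(x,y))\,dx\,dy$, which by $|P_{\ell}|\le 1$ is bounded by $\big(\int_{\mathbb{S}^{2}}|g|\big)^{2}$. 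Applying this to $g=1_{B}-1_{B,\varepsilon}$, which is supported in a boundary annulus of width $\asymp\varepsilon$ and is bounded by $1$, gives $\Var\big(\int(1_{B}-1_{B,\varepsilon})T_{\ell}\big)\lesssim \varepsilon^{2}$; under assumption (\ref{eps cond}) this is $o(1/\ell)$.

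For the smooth term I would exploit that $T_{\ell}$ is an eigenfunction, $\Delta_{\mathbb{S}^{2}}T_{\ell}=-\lambda_{\ell}T_{\ell}$ with $\lambda_{\ell}=\ell(\ell+1)$. Since $\mathbb{S}^{2}$ has no boundary, Green's identity gives $\int_{\mathbb{S}^{2}}1_{B,\varepsilon}T_{\ell}=\lambda_{\ell}^{-1}\int_{\mathbb{S}^{2}}(-\Delta_{\mathbb{S}^{2}}1_{B,\varepsilon})T_{\ell}$, and iterating $k$ times
\[
\int_{\mathbb{S}^{2}} 1_{B,\varepsilon}(x)T_{\ell}(x)\,dx = \lambda_{\ell}^{-k}\int_{\mathbb{S}^{2}}\big((-\Delta_{\mathbb{S}^{2}})^{k} 1_{B,\varepsilon}\big)(x)\,T_{\ell}(x)\,dx.
\]
Bounding this variance again through $\big(\int_{\mathbb{S}^{2}}|\cdot|\big)^{2}$ gives $\Var\big(\int 1_{B,\varepsilon}T_{\ell}\big)\lesssim \lambda_{\ell}^{-2k}\big(\int_{\mathbb{S}^{2}}|\Delta^{k}1_{B,\varepsilon}|\big)^{2}$. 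The properties from Section~3 (each derivative costs a factor $\varepsilon^{-1}$ across the boundary layer, itself of area $\asymp\varepsilon$) yield $\int_{\mathbb{S}^{2}}|\Delta^{k}1_{B,\varepsilon}|\lesssim \varepsilon^{1-2k}$, hence $\Var\big(\int 1_{B,\varepsilon}T_{\ell}\big)\lesssim \ell^{-4k}\varepsilon^{2-4k}$. Choosing $k$ large enough, the rate in (\ref{eps cond}) forces this term to be $o(1/\ell)$ as well, and combining the two bounds completes the proof.

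The delicate point is the interplay between $\varepsilon$ and $\ell$: the error term $\varepsilon^{2}$ pushes $\varepsilon$ to decay faster than $\ell^{-1/2}$, whereas the iterated-Laplacian term $\ell^{-4k}\varepsilon^{2-4k}$ requires $\varepsilon$ not to decay faster than $\ell^{-1}$, so that the admissible window for $\varepsilon$ encoded in (\ref{eps cond}), together with the explicit derivative ($W^{2k,1}$-type) estimates on $1_{B,\varepsilon}$ from Section~3, is exactly what makes the argument close. Verifying those derivative estimates on the smooth approximation is the main technical obstacle.

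As an independent check on the rate, I note that the variance can also be evaluated exactly: since $B$ is invariant under rotations about the polar axis, only the zonal mode $m=0$ of $T_{\ell}$ survives the integration, and using $\int_{\cos r}^{1}P_{\ell}(t)\,dt=(2\ell+1)^{-1}\big(P_{\ell-1}(\cos r)-P_{\ell+1}(\cos r)\big)$ one obtains
\[
\Var\Big(\int_{\mathbb{S}^{2}}1_{B}\,T_{\ell}\Big)=\frac{4\pi^{2}}{(2\ell+1)^{2}}\big(P_{\ell-1}(\cos r)-P_{\ell+1}(\cos r)\big)^{2}.
\]
The Hilb asymptotics $P_{\ell}(\cos r)=O(\ell^{-1/2})$ for fixed $r\in(0,\pi)$ then give the sharp rate $O(\ell^{-3})$, which is comfortably $o(1/\ell)$ and confirms the bound obtained along the smooth-approximation route.
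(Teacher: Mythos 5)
Your main (smoothing) route has a quantitative gap in the treatment of the difference term. You bound $\Var\big(\int(1_{B}-1_{B,\eps})T_{\ell}\big)$ by $\big(\int|1_{B}-1_{B,\eps}|\big)^{2}\lesssim\eps^{2}$ using only $|P_{\ell}|\le 1$, and assert this is $o(1/\ell)$ under (\ref{eps cond}). It is not: (\ref{eps cond}) reads $\ell^{M-1}\eps^{2M+1}>\ell^{2}$, which forces $\eps\gg\ell^{-(M-3)/(2M+1)}$, and since $(M-3)/(2M+1)<1/2$ for every $M$, the admissible $\eps$ must decay \emph{more slowly} than $\ell^{-1/2}$; hence $\eps^{2}$ can never be $o(1/\ell)$. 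This is exactly the tension you flag yourself at the end of your second-to-last paragraph, but it is not a tension that "(\ref{eps cond}) resolves" --- it kills the crude bound outright. The paper avoids this by keeping one factor of the kernel: applying Cauchy--Schwarz in the inner integral and using $\int_{\mathbb{S}^{2}}P_{\ell}^{2}(\langle x,y\rangle)\,dy=4\pi/(2\ell+1)$ yields the sharper estimate $O(\ell^{-1/2}\eps^{3/2})$, which is $o(1/\ell)$ as soon as $\eps=o(\ell^{-1/3})$, a window compatible with (\ref{eps cond}) for $M>10$. Your treatment of the smooth term by iterating Green's identity is fine in spirit (it is the same integration-by-parts mechanism the paper uses to prove the decay of the Legendre coefficients $b_{\ell,\eps}$, after which the paper evaluates $\Var\big(\int 1_{B,\eps}T_{\ell}\big)=\frac{4\pi}{2\ell+1}b_{\ell;\eps}^{2}$ exactly), though you should be aware that the derivative bounds actually proved in Section~3 carry $\eps^{-(2M+1)}$ rather than $\eps^{-M}$, so the bookkeeping must be redone with those exponents.

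Your closing "independent check" is, however, a complete and correct proof on its own, and a sharper one than the paper's. Since $B$ is a polar cap, only the $m=0$ mode survives, $\int_{B}T_{\ell}\,dx=a_{\ell 0}\,\frac{2\pi}{2\ell+1}\big(P_{\ell-1}(\cos r)-P_{\ell+1}(\cos r)\big)$, and with $a_{\ell 0}$ standard Gaussian and $P_{\ell}(\cos r)=O(\ell^{-1/2})$ for fixed $r\in(0,\pi)$ one gets $\Var\big(\int_{B}T_{\ell}\big)=O(\ell^{-3})$ --- unconditionally, with no mollifier and no hypothesis (\ref{eps cond}) at all. I would recommend promoting that computation to the proof and discarding the smoothing argument for this particular proposition; the mollifier machinery is genuinely needed elsewhere in the paper (for the second chaos and the fourth cumulant), but not here.
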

To establish this result, we write the variance as 
\begin{equation}\label{key1}
\begin{split}
\Var\bigg(\int_{\mathbb{S}^{2}}1_{B}(x)T_{\ell }(x)\,dx\bigg)& =\Var\bigg(%
\int_{\mathbb{S}^{2}}(1_{B}(x)-1_{B,\varepsilon }(x))T_{\ell }(x)\bigg)+\Var\bigg(%
\int_{\mathbb{S}^{2}}1_{B,\varepsilon }(x)T_{\ell }(x)\,dx\bigg)+ \\
& +\mathbb{E}\bigg[\int_{\mathbb{S}^{2}}(1_{B,\varepsilon }(x)-1_{B}(x))1_{B;\varepsilon
}(y)T_{\ell }(x)T_{\ell }(y)\bigg].
\end{split}%
\end{equation}%
The second integral in (\ref{key1}) will be computed to be 
\begin{equation*}
\Var\bigg(\int_{\mathbb{S}^{2}}1_{B,\varepsilon }(x)T_{\ell }(x)\,dx\bigg)=\frac{4\pi 
}{2\ell +1}b_{\ell ,\varepsilon }^{2};
\end{equation*}%
where, we have already said, $b_{\ell ,\varepsilon }$ are the Fourier coefficients of $%
1_{B,\varepsilon }(x)$, given in Theorem \ref{mollifier}. The former
and the latter terms in (\ref{key1}) will be proved to be of order $\frac{1}{\sqrt{\ell}}\varepsilon ^{3/2}$, for $\eps \rightarrow 0$. Hence, the right hand side of Assumption $\ref{cond-eps}$ implies
the thesis of Proposition \ref{cor1chaos}.\\

As far as the second chaotic component is concerned, the following proposition will be proved.

\begin{prop}\label{2chaotic} Let us consider $1_{B,\eps}(x)$, for $\eps>0$, the continuous function constructed in Section \ref{mol}, converging to $1_{B}(x)$ in $L^1(\mathbb{S}^2)$, with $\eps$ satisfying Assumption \ref{cond-eps}. It can be proved (see Lemma \ref{lemma1}) that
	\begin{equation}\label{key3}
	\Var\bigg( \int_{\mathbb{S}^2} 1_{B,\eps}(x) H_2(T_\ell(x))\,dx\bigg)=8\pi \sum_{\ell _{1}}b_{\ell
		_{1};\varepsilon }^{2}\dfrac{1}{2\ell _{1}+1}\bigg(C_{\ell 0\ell 0}^{\ell
		_{1}0}\bigg)^{2},
	\end{equation}
	where $\{C_{\ell 0\ell 0}^{\ell _{1}0}\}$ are
	the Clebsch-Gordan coefficients (\cite{quantum theory}, Chapter 8 or Appendix \ref{C-G Appendix}).
	Then, the variance of the second chaotic projection of the excursion area in (\ref{serie-var}) is
	\begin{equation}
	\Var\bigg(\int_{B}H_{2}(T_{\ell }(x))\,dx\bigg)=8\pi \sum_{\ell _{1}}b_{\ell
		_{1};\varepsilon }^{2}\dfrac{1}{2\ell _{1}+1}\bigg(C_{\ell 0\ell 0}^{\ell
		_{1}0}\bigg)^{2}+o\bigg(\frac{1}{\ell }\bigg),  \label{variance2chaos}
	\end{equation}%
	as $\ell \rightarrow \infty$, where the bound is uniformly in $\eps$.
\end{prop}

Note that, to prove Proposition \ref{2chaotic}, it is sufficient that $\eps$ satisfies ($\ref{cond2}$).

\begin{oss}\label{norma} It is easy to see with Parseval's identity that
	$\sum_\ell b_{\ell;\varepsilon}^2=||1_{B;\varepsilon} ||_{L^2(\mathbb{S}^2)}^2 \leq
	m(\mathbb{S}^2)= 4\pi$.
\end{oss}

\begin{oss}It is interesting to compare the results in Proposition \ref{2chaotic} with the one in the case of the full sphere. Then, let us consider $B=\mathbb{S}^2$, i.e. $1_B(\cdot)=1_{\mathbb{S}^2}(%
	\cdot);$ in this case the approximating function $1_{B;\varepsilon}(\cdot)$ is not necessary. Indeed, the only
	term of the Fourier expansion of the indicator function $%
	1_{\mathbb{S}^2}(\cdot)$ is $\ell_1=0,$ moreover, for (\ref{1 pag 248})
	\begin{equation*}
	C_{\ell0\ell0}^{00}= \dfrac{1}{\sqrt{2\ell+1}},
	\end{equation*}
	\begin{equation*}
	\mbox{ and }\mbox{ }\mbox{ }\mbox{ } b_0=2\pi \int_{0}^{\pi} \dfrac{1}{\sqrt{4\pi}} \sin \theta d\theta=\dfrac{%
		4\pi}{\sqrt{4\pi}}=\sqrt{4\pi},
	\end{equation*}
	so that
	\begin{equation*}
	8\pi \sum_{\ell_1=0}^{\infty} b_{\ell_1;\varepsilon}^2 \dfrac{1}{2\ell_1+1}%
	(C_{\ell 0\ell 0}^{\ell_1 0})^2=8\pi b_0^2 (C_{\ell 0 \ell 0}^{00})^2= 32\pi^2
	\dfrac{1}{2\ell+1},
	\end{equation*}
	hence, the variance is
	\begin{equation*}
	\Var\bigg(\int_B H_2(T_\ell(x)) \,dx \bigg)= 32\pi^2 \dfrac{1}{2\ell+1}
	\sim 16 \pi^2 \dfrac{1}{\ell},
	\end{equation*}
	that is exactly the value obtained in \cite{M e W 2011}, Proposition 2.1.
\end{oss}

The idea of the proof of Proposition \ref{2chaotic} is similar to the one given in Proposition \ref{cor1chaos}. More precisely,
we write 
\begin{equation}\label{un giorno di pioggia}
\begin{split}
\Var\bigg(\int_{\mathbb{S}^{2}}H_{2}(T_{\ell }(x))\,dx\bigg)& =\Var\bigg(%
\int_{\mathbb{S}^{2}}(1_{B}(x)-1_{B,\varepsilon }(x))H_{2}(T_{\ell }(x))\,dx\bigg)\\&+\Var%
\bigg(\int_{\mathbb{S}^{2}}1_{B,\varepsilon }(x)H_{2}(T_{\ell }(x))\,dx\bigg) \\
& +2\mathbb{E}\bigg[\int_{\mathbb{S}^{2}\times \mathbb{S}^{2}}1_{B,\varepsilon }(x)\bigg(%
1_{B}(y)-1_{B,\varepsilon }(y)\bigg)H_{2}(T_{\ell }(x))H_{2}(T_{\ell
}(y))\,dxdy\bigg].
\end{split}%
\end{equation}%
The first integral in (\ref{un giorno di pioggia}) can be shown to be smaller than $\dfrac{Const\cdot
	\varepsilon}{2\ell +1}$ (see below (\ref{key2})), which is a $o\bigg(\dfrac{1}{%
	\ell }\bigg)$ since $\varepsilon \rightarrow 0$; the same bound holds for
the third integral in (\ref{un giorno di pioggia}), in view of the Cauchy-Schwarz inequality. Whereas, for the second
integral in (\ref{un giorno di pioggia}), the validity of (\ref{key3}) can be proved (see Lemma \ref{lemma1}) %
and it is seen that its asymptotic order is $\dfrac{1}{\ell }$ (see Lemma \ref{lemma2}). The
proof of (\ref{key3}) is based on manipulations of spherical
harmonics and their integrals. More precisely, we make use of the addition
formula (see for example \cite{M e Peccati}, eq. (3.42) p. 66):
\begin{equation}
\sum_{m=-\ell }^{\ell }\overline{Y}_{\ell m}(x)Y_{\ell m}(y)=\dfrac{2\ell +1%
}{4\pi }P_{\ell }(\langle x,y\rangle )\text{ ;}  \label{add formula}
\end{equation}%
moreover, recalling that 
\begin{equation}
Y_{\ell 0}(\theta ,\varphi )=\sqrt{\dfrac{2\ell +1}{4\pi }}P_{\ell }(\cos
\theta ),
\end{equation}%
using the expansion%
\begin{equation}
1_{B;\varepsilon }(x)=\sum_{\ell =0}^{\infty }b_{\ell ;\varepsilon }Y_{\ell
0}(x)  \label{fn caratteristica}
\end{equation}%
and replacing these formulae in the left hand side in (\ref{key3}), we obtain
the so-called Gaunt integral of spherical harmonics (\cite{M e Peccati} eq. (3.64) p. 81)
which can be computed by the following relation: 
\begin{equation}\label{ullalla}
\int_{\mathbb{S}^{2}}Y_{\ell _{1}m_{1}}(x)Y_{\ell _{2}m_{2}}(x)\overline{Y}_{\ell
_{3}m_{3}}(x)d\sigma (x)=\sqrt{\dfrac{(2\ell _{1}+1)(2\ell _{2}+1)}{4\pi
(2\ell _{3}+1)}}C_{\ell _{1}m_{1}\ell _{2}m_{2}}^{\ell _{3}m_{3}}C_{\ell
_{1}0\ell _{2}0}^{\ell _{3}0}, 
\end{equation}%
for all $\ell _{1},\ell _{2},\ell _{3}.$ Finally, the proof is completed by
a careful analysis of properties for the Clebsch-Gordan coefficients, most
of which are reported in the Appendix.\\

The next important step in our argument is to establish the Quantitative
Central Limit Theorem. This argument requires two steps; first we need to
show that the variance of all higher-order chaoses for $q\geq 3$ is of
smaller order; this can be done quite simply by some rather easy
majorizations, which allow to show that all these terms are of order $o\bigg(%
\dfrac{1}{\ell }\bigg)$. On the other hand, since the second term is the
leading component, we need to compute its fourth-order cumulant to be able
to apply Theorem 5.2.6 in \cite{Peccati Nourdin} and
hence to establish asymptotic Gaussianity.

However, the difficulty to handle computations with the indicator function leads us to consider the fourth cumulant of 
\begin{equation}\label{h2star}
h_{\ell,2}^*(B):=\int_{\mathbb{S}^2} 1_{B,\eps}(x) H_2(T_\ell(x))\, dx;
\end{equation}
more precisely, we shall show that

\begin{prop}	\label{4 cumulant} Under the assumptions of Proposition \ref{2chaotic}, the fourth cumulant of (\ref{h2star}) satisfies
	\begin{equation*}
	cum_4(h_{\ell;2}^*(B))=O\bigg(\dfrac{1}{\ell^3}\bigg),
	\end{equation*}
	as $\ell \rightarrow \infty.$
\end{prop}
Our approach in Proposition \ref{4 cumulant} is different from the one used in related circumstances
by for instance \cite{M e W 2011bis}, \cite{CMW2016}, \cite{Rossi}; indeed these papers use an approximation of Legendre polynomials
known as Hilb's asymptotics (see \cite{M e W 2012}, \cite{M e W 2011bis}): however this approximation turned out not to be
efficient enough in the present framework. Hence, we need to exploit a
different argument, i.e., we
compute the exact values of the multiple integrals for spherical harmonics
by means of Gaunt integrals (\ref{ullalla}) (see \cite{M e Peccati}) and Clebsch-Gordan coefficients.\\

At this stage, thanks to Theorem 5.2.7 \cite{Peccati Nourdin}, the following bound holds 
\begin{equation*}
d_{W}\bigg(\dfrac{h_{\ell;2 }^*(B)}{\sqrt{\Var(h_{\ell ;2}^*(B))}},Z\bigg)\leq \sqrt{\dfrac{1}{6}\bigg(\dfrac{cum_{4}(h_{\ell;2 }^*(B))}{%
		\Var(h_{\ell;2 }^*(B))^{2}}\bigg)},
\end{equation*}%
then, CLT can be proved for $h_{\ell,2}^*(B)$. Now, in view of the fact that 
\begin{equation}\label{bound per w}
\mathbb{E}\left[ \int_{\mathbb{S}^{2}}1_{B,\varepsilon }(x)H_{2}(T_{\ell
}(x))\,dx-\int_{\mathbb{S}^{2}}1_{B}(x)H_{2}(T_{\ell }(x))\,dx\right] ^{2}=o \bigg(\dfrac{1}{\ell} \bigg)%
\text{ as }\varepsilon \rightarrow 0,
\end{equation}%
we prove Theorem \ref{tlc} exploiting the triangular inequality for the Wasserstein distance. Note that, the result of Theorem \ref{tlc} is the same obtained for the sphere in (\ref{tlc-sfera})  (see also \cite{M e Mau 2015}).


\section{Construction of a mollifier for the characteristic function}\label{mol}
This section can be considered of some independent interest; it describes a method to construct an approximation of the indicator function, i.e., it gives an explicit expression for the function $1_{B,\eps}$, already mentioned, converging to the indicator function $1_B{(\cdot)}$ in $L^1(\mathbb{S}^2)$.\\\\
For any fixed $M>0, M \in \mathbb{N}$, a general method to construct a function $%
\phi (\cdot )\in C^{M}$, can be given by the
B-splines approach (see \cite{M e Peccati}, p. 250), as follows. First of all, recall that the Bernstein polynomials
are defined as 
\begin{equation*}
B_{i}^{(n)}(t):=\binom{n}{i}t^{i}(1-t)^{n-i},
\end{equation*}%
where $t\in \lbrack 0,1],$ $i=0,\dots ,n$ and $n=1,2,\dots .$ Then, we can
define polynomials 
\begin{equation*}
q_{2k+1}(t):=\sum_{i=0}^{k}B_{i}^{(2k+1)}(t);
\end{equation*}%
one has that  $q_{2k+1}(0)=1$ and $q_{2k+1}(1)=0.$ Moreover, 
\begin{equation*}
q_{2k+1}^{(m)}(1)=q_{2k+1}^{(m)}(0)=0\mbox{ for }m=1,\dots ,k.
\end{equation*}%
Hence, let  $r \in (0,\pi)$ and $\theta \in [0,\pi)$, for any $\eps >0$ we set
\begin{equation*}
t:=\dfrac{\theta -(r-\varepsilon )}{r-(r-\varepsilon )} \mbox{ } \in [0,1]
\end{equation*}%
and define the function 
\begin{equation}
\phi _{r,\varepsilon }(\theta ):=%
\begin{cases}
1 & \mbox{ if }\theta \in \lbrack 0,r-\varepsilon ) \\ 
q_{2k+1}(t)=q_{2k+1}(\frac{\theta -r+\varepsilon }{\varepsilon }) & \mbox{
if }\theta \in \lbrack r-\varepsilon ,r] \\ 
0 & \mbox{ if }\theta \in \lbrack r,\pi )%
\end{cases}%
\end{equation}%
with $\theta \in (0,\pi )$.

\begin{figure}[h!]
\centering
\includegraphics[width=0.8\linewidth]{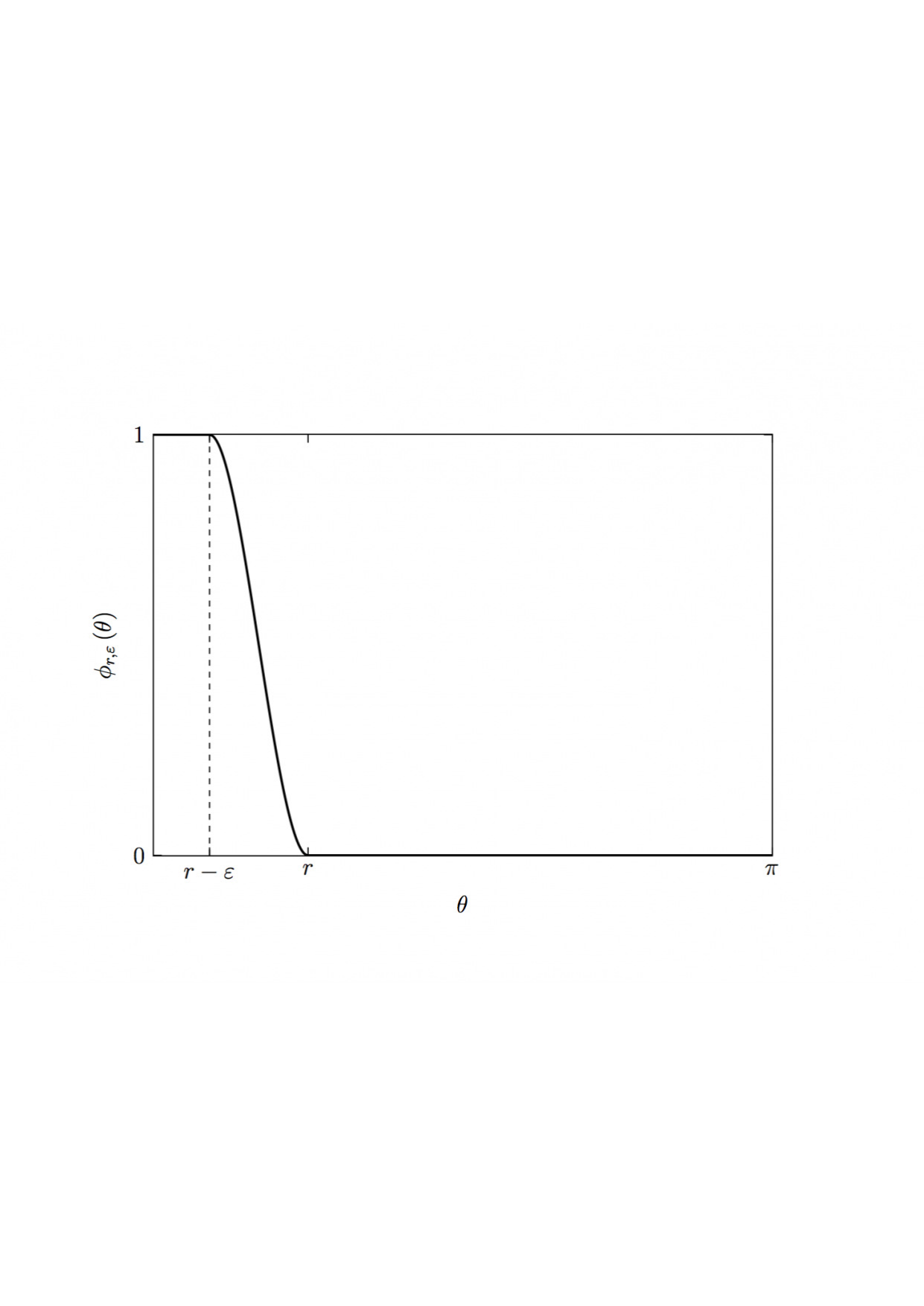} 	\caption{}
\label{fig:fig1.jpg}
\end{figure}

The function $\phi_{r,\varepsilon}(\theta)$ is a $2k+1$-degree polynomial, so $%
\phi_{r,\varepsilon} \in C^{M}$ for $M< k+1/2$ and $\phi_{r,\varepsilon}(
r-\varepsilon)=q_{2k+1}(0)=1$.

\begin{oss}\label{approx}
The indicator function $1_B(x), x \in \mathbb{S}^2$ can be written in spherical coordinates as $1_{B}(\theta, \varphi)$ with $\theta \in [0,\pi)$ and $\varphi \in [0,2\pi]$ but it only depends on the angle $\theta$, namely,
\begin{equation}
1_{B}(\theta,\varphi)=%
\begin{cases}
1 & \theta \leq r \\ 
0 & otherwise%
\end{cases}
=1_B(\theta).
\end{equation}
Defining $1_{B,\varepsilon}(\theta):=\phi_{r,\varepsilon}(\theta)$, it is easily to see that, as $\eps \rightarrow 0$, $1_{B,\eps}(\cdot) \rightarrow 1_{B}(\cdot)$ in $L^1(\mathbb{S}^2)$. In fact,
\begin{equation*}
\begin{split}
\int_{\mathbb{S}^2} |1_B(x)-1_{B,\varepsilon}(x)| dx&=2\pi \int_{0}^{\pi}
|1_B(\theta)-1_{B,\varepsilon}(\theta)| \sin \theta d\theta \\& \leq 2\pi \int_{
r-\varepsilon}^{ r} \bigg|q_{2k+1}\bigg(\dfrac{\theta-\cos r+\varepsilon}{%
\varepsilon}\bigg) \bigg| \sin \theta d \theta \leq 2\pi \varepsilon
\rightarrow 0,
\end{split}
\end{equation*}
as $\varepsilon \rightarrow 0.$
\end{oss}
Now we focus on the function $\phi_{r,\varepsilon}(\cdot)$. As denoted in \cite{Lang e Schwab}, we define $%
k_{r,\varepsilon}(\mu):=\phi_{r,\varepsilon}( \arccos \mu)$ with $\mu \in
[-1,1]$. Now recall that any function $u \in L^2(-1,1)$ can be
expanded in the $L^2(-1,1)$ convergent Fourier-Legendre series as 
\begin{equation*}
u=\sum_{\ell=0}^{\infty} u_\ell \frac{2\ell+1}{2}P_\ell 
\mbox{ }\mbox{ with } \mbox{ }
u_\ell=\int_{-1}^{1} u(x)P_\ell(x) \, dx
\end{equation*}
and hence
\begin{equation*}
b_\ell=2\pi u_\ell\sqrt{\dfrac{2\ell+1}{4\pi}}=\int_{-1}^{1} u(x) Y_\ell(x)
\, dx;
\end{equation*}

thus we can expand $k_{r,\eps}$ in such a series and its Fourier coefficients are 
\begin{equation*}
b_{\ell,\varepsilon}^r=\sqrt{\dfrac{2\ell+1}{4\pi}} \int_{-1}^{1}
k_{r,\varepsilon}(\mu)P_\ell(\mu) \,d\mu.
\end{equation*}
\begin{oss}\label{remark b0}
For $\ell=0$, it is easy to see that $	b_{0,\varepsilon}^r$ is bounded above and below by two positive constants. Actually, by definition,
	\begin{equation*}
	b_{0,\varepsilon}^r= \sqrt{\dfrac{1}{4\pi}} \int_{-1}^{1} k_{r,\varepsilon}(
	\theta) \, d\theta= \dfrac{1}{\sqrt{4\pi}} \int_{-1}^{1} \phi_{r,\varepsilon}(\arccos \theta) \,d \theta;
	\end{equation*}
	changing cordinates $\arccos \theta=x$, one has
	\begin{equation}
	\begin{split}
	b_{0,\varepsilon}^r&= \dfrac{1}{\sqrt{4\pi}} \int_{0}^{\pi}
	\phi_{r,\varepsilon} (x) \sin x \,dx \\
	&= \dfrac{1}{\sqrt{4\pi}} \int_{0}^{r-\varepsilon} \sin x\,dx + \dfrac{%
		1}{\sqrt{4\pi}} \int_{r-\varepsilon}^{r} q_{2k+1} (x) \sin x \,dx \\
	& \geq \dfrac{1}{\sqrt{4\pi}} \int_{0}^{r-\varepsilon} \sin x\,dx= 
	\dfrac{1}{\sqrt{4\pi}}(1-\arccos (r-\eps)) \geq \dfrac{1-r+\eps}{\sqrt{4\pi}}>\dfrac{1-r}{\sqrt{4\pi}}
	\end{split}%
	\end{equation}
	and since 
	\begin{equation*}
	|\phi_{r,\varepsilon}(\theta)| \leq 1,
	\end{equation*}
	it is immediate to conclude that $$ \dfrac{1-r}{\sqrt{4\pi}} \leq b_{0,\varepsilon}^r\leq \dfrac{1}{\sqrt{\pi}}.$$
\end{oss}

The main result of this section is given in the proposition below, which yields a bound for the Fourier coefficients $b_{\ell,\eps}^r$.
\begin{prop}
\label{coeff} For any fixed $M \in \mathbb{N}$ and $r \in (0,\pi)$, there exists a constant $K_{M,r}$
such that 
\begin{equation*}
|b_{\ell,\varepsilon}^r|\leq \min \bigg\{ b_{0,\eps}^r,\dfrac{K_{M,r}}{ \ell^{M-\frac{1}{2}}
\varepsilon^{2M+1} } \bigg\} .
\end{equation*}
\end{prop}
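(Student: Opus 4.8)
The plan is to prove the two entries of the minimum separately, the first being a uniform (low‑frequency) bound and the second the decay in $\ell$. Throughout I write $I_\ell := \int_{-1}^1 k_{r,\varepsilon}(\mu)P_\ell(\mu)\,d\mu$, so that $b_{\ell,\varepsilon}^r = \sqrt{(2\ell+1)/(4\pi)}\,I_\ell$, and I use that $k_{r,\varepsilon}=\phi_{r,\varepsilon}\circ\arccos$ takes values in $[0,1]$, is constant ($=1$ near $\mu=1$ and $=0$ near $\mu=-1$) outside the transition window $\{\theta\in[r-\varepsilon,r]\}$, and is of class $C^{M}$ there provided the parameter $k$ in $q_{2k+1}$ is chosen with $k+1/2>M$. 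For the first entry I use $|P_\ell(\mu)|\le 1$ on $[-1,1]$: for $\ell\ge 1$ orthogonality gives $\int_{-1}^1 P_\ell\,d\mu=0$, so I may center $k_{r,\varepsilon}$ and write $I_\ell=\int_{-1}^1(k_{r,\varepsilon}-\tfrac12)P_\ell\,d\mu$; since $|k_{r,\varepsilon}-\tfrac12|\le\tfrac12$ this yields $|I_\ell|\le\tfrac12\int_{-1}^1|P_\ell(\mu)|\,d\mu=O(\ell^{-1/2})$, the last step being the classical $L^1$ estimate for Legendre polynomials. Hence $|b_{\ell,\varepsilon}^r|=O(1)$ uniformly in $\ell$ and $\varepsilon$; since Remark \ref{remark b0} bounds $b_{0,\varepsilon}^r$ above and below by positive constants depending only on $r$, this uniform bound is, up to those constants, exactly the first entry of the minimum.

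The second bound is the substantive one and is obtained by iterating integration by parts against the Legendre operator. Writing $\lambda_\ell=\ell(\ell+1)$, I use the Legendre equation in the form $P_\ell=-\lambda_\ell^{-1}\frac{d}{d\mu}\big[(1-\mu^2)P_\ell'\big]$ together with the identity $(2\ell+1)(1-\mu^2)P_\ell'=\lambda_\ell\,(P_{\ell-1}-P_{\ell+1})$. Substituting the first and integrating by parts once, the boundary contributions vanish because $(1-\mu^2)$ annihilates the endpoint terms at $\mu=\pm1$ while the $C^{M}$ matching of $k_{r,\varepsilon}$ at the transition endpoints prevents any interior jump; using then the second identity I arrive at $I_\ell=\frac{1}{2\ell+1}\int_{-1}^1 k_{r,\varepsilon}'\,(P_{\ell-1}-P_{\ell+1})\,d\mu$. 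Repeating this reduction exactly $M$ times (which is all that $k_{r,\varepsilon}\in C^{M}$ permits) transfers $M$ derivatives onto $k_{r,\varepsilon}$ and produces $2^{M}$ terms of the shape $c\,\big(\prod_{j}(2\ell_j+1)^{-1}\big)\int_{-1}^1 k_{r,\varepsilon}^{(M)}P_{\ell''}\,d\mu$ with $|\ell''-\ell|\le M$; each of the $M$ denominators is comparable to $2\ell$, so the accumulated gain is of order $\ell^{-M}$.

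It remains to estimate the top‑order integrals. Bounding $|P_{\ell''}|\le 1$ gives $\big|\int k_{r,\varepsilon}^{(M)}P_{\ell''}\,d\mu\big|\le\|k_{r,\varepsilon}^{(M)}\|_{L^1}$, so everything reduces to controlling the $M$‑th $\mu$‑derivative of $k_{r,\varepsilon}=\phi_{r,\varepsilon}\circ\arccos$. By the Faà di Bruno formula this derivative is a sum of products of $q_{2k+1}^{(j)}$ (bounded constants, independent of $\varepsilon$) with derivatives of $\arccos$ and of the inner argument $(\theta-r+\varepsilon)/\varepsilon$: each differentiation of the latter contributes a factor $\varepsilon^{-1}$, while the derivatives of $\arccos$ contribute negative powers of $\sqrt{1-\mu^2}=\sin\theta$, which are harmless because the support of $k_{r,\varepsilon}^{(M)}$ lies in $\theta\in[r-\varepsilon,r]$, bounded away from the poles $\theta=0,\pi$ for fixed $r\in(0,\pi)$. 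Collecting these factors crudely, so as to absorb all $\sin^{-1}$ and combinatorial contributions, yields $\|k_{r,\varepsilon}^{(M)}\|_{L^1}\le K_{M,r}\,\varepsilon^{-(2M+1)}$, and therefore $|b_{\ell,\varepsilon}^r|\le\sqrt{(2\ell+1)/(4\pi)}\cdot\ell^{-M}\cdot K_{M,r}\,\varepsilon^{-(2M+1)}\lesssim K_{M,r}\,\ell^{1/2-M}\varepsilon^{-(2M+1)}$, which is the second entry.

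The main obstacle is this last step: keeping precise track of the $\varepsilon$‑dependence through the Faà di Bruno expansion of $\phi_{r,\varepsilon}\circ\arccos$, where the interaction between the $\varepsilon^{-1}$ factors from the inner scaling and the $(1-\mu^2)^{-\bullet}$ factors from $\arccos$ must be organized so that the stated power $\varepsilon^{-(2M+1)}$ is not exceeded. A secondary technical point is verifying, at each of the $M$ integration‑by‑parts steps, that both the endpoint terms at $\mu=\pm1$ and the interior matching terms genuinely vanish; for this the vanishing of $(1-\mu^2)$ and the $C^{M}$ construction of $q_{2k+1}$ are precisely what is required.
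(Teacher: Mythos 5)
Your proof of the decay bound $K_{M,r}\,\ell^{1/2-M}\varepsilon^{-(2M+1)}$ is essentially the paper's: the same $M$-fold integration by parts against $(2\ell+1)P_\ell=\frac{d}{d\mu}\left[P_{\ell+1}-P_{\ell-1}\right]$ (your detour through the Legendre ODE combined with $(1-\mu^2)P_\ell'=\frac{\lambda_\ell}{2\ell+1}(P_{\ell-1}-P_{\ell+1})$ reproduces exactly that identity, with the same vanishing boundary terms), yielding $2^{M}$ terms each gaining $\ell^{-M}$, followed by the same Fa\`a di Bruno estimate $|k_{r,\varepsilon}^{(M)}|\leq C_{M,r}\varepsilon^{-(2M+1)}$ on the support $[r-\varepsilon,r]$, which stays away from $\mu=\pm1$. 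The only divergence concerns the first entry of the minimum, which the paper's proof does not address at all and which your centering argument establishes only up to an $r$-dependent constant (a uniform bound $|b^r_{\ell,\varepsilon}|=O(1)$) rather than literally as $b^r_{0,\varepsilon}$; this is harmless for the rest of the paper, where only uniform boundedness of the coefficients is used.
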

In order to prove Proposition \ref{coeff}, we get a bound for the $M-$derivative of $%
k_{r,\varepsilon}$. Since $k_{r,\varepsilon}(\mu)$ is a composite function, Faà di Bruno's formula implies:
\begin{equation}\label{faadibruno}
D^{M}(\phi_{r,\varepsilon}(\arccos \mu))= M! \sum_{\nu=1}^{M} \frac{(D^\nu
\phi_{r,\varepsilon})(\arccos \mu)}{\nu!} \sum_{h_1+\dots+h_\nu=M} \frac{%
D^{h_1} \arccos \mu}{h_1!} \dots \frac{D^{h_\nu} \arccos \mu}{h_{\nu!}},
\end{equation}
where the second sum is computed on all the possible integer values of $%
h_1,\dots, h_\nu \geq 1$ with sum equal to M. We note that this sum is
bounded by a constant which depends on $r$; indeed, the arccos is a $C^{\infty}$ function in each compact subset of $(-1,1)$ and since outside $[r-\eps,r]$ all the derivatives of $\phi$ are zero and $r\ne \pi$, $\mu$ is always different from $+1$ and $-1$; hence the second sum of (\ref{faadibruno}) is bounded away from $-1$ and $1$. As far as the first sum is concerned in (\ref{faadibruno}), it is possible to compute it explicitly
\begin{equation}\label{b}
\begin{split}
&\sum_{\nu=1}^{M} \frac{(D^\nu \phi_{r,\varepsilon})(\arccos \mu)}{\nu!}=
\sum_{\nu=1}^{M} \frac{1}{\nu!} D^\nu \bigg(\phi_{r,\varepsilon}\bigg(\frac{%
\arccos \mu-r+\varepsilon}{\varepsilon}\bigg)\bigg)= \\
& =\sum_{\nu=1}^{M} \frac{1}{\nu!} \bigg[D^\nu q_{2k+1}\bigg(\frac{\arccos
\mu-r+\varepsilon}{\varepsilon}\bigg)\bigg]1_{[r-\varepsilon,r]}
=\sum_{\nu=1}^{M} \frac{1}{\nu!} \bigg[D^\nu \sum_{i=0}^{k} B_i^{2k+1} \bigg(%
\frac{\arccos \mu-r+\varepsilon}{\varepsilon}\bigg)\bigg]1_{[r-%
\varepsilon,r]}= \\
& = \sum_{\nu=1}^{M} \frac{1}{\nu!} \bigg[D^\nu \sum_{i=0}^{k} \binom{2k+1}{i%
} \bigg( \frac{\arccos \mu -r +\varepsilon}{\varepsilon}\bigg)^i \bigg(1-%
\frac{\arccos \mu -r+\varepsilon}{\varepsilon}\bigg)^{2k+1-i} \bigg]%
1_{[r-\varepsilon,r]}= \\
& = \sum_{\nu=1}^{M}\frac{1}{\nu!} \frac{1}{\varepsilon^{2k+1}}\bigg[D^\nu
\sum_{i=0}^{k} \binom{2k+1}{i} ( {\arccos \mu -r +\varepsilon})^i (r-\arccos
\mu)^{2k+1-i} \bigg]1_{[r-\varepsilon,r]}.
\end{split}%
\end{equation}
Since $\sum_{i=0}^{k} \binom{2k+1}{i} ( {\arccos \mu -r +\varepsilon})^i
(r-\arccos \mu)^{2k+1-i}$ is a polynomial in the  compact domain $[r-\eps,r]$, we can bound (\ref{b}) by $\dfrac{C_{M,r}}{\eps^{2M+1}}$, where $C_{M,r}$ is a constant depending on $r$ and $M$. The absolute value of (\ref{faadibruno}) satisfy then
\begin{equation}\label{bb}
\bigg|D^M\phi_{r,\varepsilon}(\arccos \mu)\bigg| \leq \dfrac{M!C_{M,r}}{%
\varepsilon^{2M+1}}.
\end{equation}
We are hence in the position to prove Proposition \ref{coeff}.
\begin{proof}[Proof of Proposition \ref{coeff}]
		We recall the following property of the Legendre polynomials (see for instance \cite{abra}, Chapter 22, formula 22.7 and formula 22.8 combined with the Legendre differential equation)
		\begin{equation}\label{nu}
		(2\ell+1)P_\ell(x)= \dfrac{d}{dx}\bigg[P_{\ell+1}(x)-P_{\ell-1}(x)\bigg],
			\end{equation}
		and we substitute it in the definition of $b_{\ell,\eps}^r$	to obtain, integrating by parts,
		\begin{equation}\label{bo}
			\begin{split}
		\int_{-1}^{1} k_{\eps,r}(x) P_\ell(x)\,dx&=\bigg[ k_{\eps,r}(x) \dfrac{P_{\ell+1}(x)-P_{\ell-1}(x)}{2\ell+1} \bigg|_{-1}^{1}- \int_{-1}^{1} \frac{d}{dx}k_{\eps,r}(x) \dfrac{P_{\ell+1}(x)-P_{\ell-1}(x)}{2\ell+1} \,dx\bigg]\\&
		=\dfrac{1}{2\ell+1} \int_{-1}^{1} \frac{d}{dx} k_{\eps,r}(x) P_{\ell+1}(x) \,dx-\dfrac{1}{2\ell+1} \int_{-1}^{1} \frac{d}{dx} k_{\eps,r}(x) P_{\ell-1}(x) \,dx.
			\end{split}
		\end{equation}
		Applying again (\ref{nu}) to $P_{\ell+1}$ and to $P_{\ell-1}$ in the place of $P_\ell$ and integrating by parts, one has that (\ref{bo}) holds
		\begin{equation}
		\begin{split}
		&=\dfrac{1}{2\ell+1} \frac{1}{2\ell+3} \int_{-1}^{1}\frac{d^2}{dx^2}k_{\eps,r}(x) (P_{\ell+2}(x)-P_{\ell}(x))\,dx -\dfrac{1}{(2\ell+1)}\dfrac{1}{2\ell-1}\int_{-1}^{1}\frac{d^2}{dx^2}k_{\eps,r}(x) (P_{\ell}(x)-P_{\ell-2}(x))\,dx\\&
		=\dfrac{1}{2\ell+1} \frac{1}{2\ell+3} \int_{-1}^{1}\frac{d^2}{dx^2}k_{\eps,r}(x) P_{\ell+2}(x)\,dx +\dfrac{1}{(2\ell+1)}\dfrac{1}{2\ell-1}\int_{-1}^{1}\frac{d^2}{dx^2}k_{\eps,r}(x) P_{\ell-2}(x)\,dx\\&
		 -\dfrac{1}{2\ell+1} \frac{1}{2\ell+3} \int_{-1}^{1}\frac{d^2}{dx^2}k_{\eps,r}(x)P_{\ell}(x)\,dx -\dfrac{1}{2\ell+1}\dfrac{1}{2\ell-1}\int_{-1}^{1}\frac{d^2}{dx^2}k_{\eps,r}(x)P_{\ell}(x)\,dx.
		\end{split}
		\end{equation}
Iterating $M$ times, taking the absolute value, using (\ref{bb}) and the fact that $|P_\ell(x)|\leq1$ in $[-1,1]$ $\forall \ell$, one has that $|\int_{-1}^{1} k_{\eps,r}(x) P_\ell(x)\,dx|$ is bounded by $2^{M}$ terms times 
		$$ \dfrac{ C}{\ell^M}\dfrac{M!C_{M,r}}{\eps^{2M+1}}.$$
	Consequently, for $\ell \geq 1$
\begin{equation}\label{perche cond1}
|b_{\ell;\eps}^r|\leq \sqrt{\dfrac{2\ell+1}{4\pi}} \dfrac{C 2^M M!C_{M,r}}{\eps^{2M+1} \ell^{M}} \leq \dfrac{K_{M,r}}{\ell^{M-1/2} \eps^{2M+1}},
\end{equation}
where $K_{M,r}= \sqrt{\dfrac{3}{4\pi}} M!2^{M} C C_{M,r}$.


\end{proof}
\begin{oss}
Note that for the coefficients $b_{\ell;\eps}^r$ to go to zero, the condition $$\ell^{M-1/2}\eps^{2M+1} \rightarrow \infty,$$ as $\ell \rightarrow \infty$ and $\eps \rightarrow 0$, has to be satisfied; Assumption $\ref{cond-eps}$ ensures it. 
\end{oss}
In conclusion, this section can be summarized in the theorem below.

\begin{thm}
\label{mollifier} Let $B \subset \mathbb{S}^2$ be a spherical cap of radius $r \in (0,\pi)$, parametrized
by $\theta \in [0,r], \varphi \in [0,2\pi]$. For
any $M>0 \in \mathbb{N} $ and $\varepsilon>0$ there exists a function $%
1_{B,\varepsilon} \in C^{M}$ which converges to the indicator function $%
1_B(x)$ in $L^1(\mathbb{S}^2)$, as $\varepsilon \rightarrow 0$, such that the
coefficients $b_{\ell,\varepsilon}^r$ of the Fourier expansion 
	\begin{equation}\label{numeretto4}
1_{B,\varepsilon}( \theta)=\sum_{\ell=0}^{\infty} b^r_{\ell;\varepsilon} 
\sqrt{\dfrac{2\ell+1}{4\pi}} P_\ell(\cos \theta), \mbox{ } \mbox{ }%
b^r_{\ell;\varepsilon}=\sqrt{\frac{2\ell+1}{4\pi}}\int_{-1}^{1}
1_{B,\varepsilon}(\arccos x)Y_\ell(x) \, dx
\end{equation}
satisfy the condition 
	\begin{equation}\label{numeretto3}
|b^r_{\ell;\varepsilon}| \leq \min \bigg\{ b_{0,\eps}^r, \dfrac{K_{M,r}}{ \ell^{M-\frac{1}{2}%
	}\varepsilon^{2M+1}} \bigg\}
\end{equation}
as $\ell \rightarrow \infty$, where 
\begin{equation*}
K_{M,r}= \sqrt{\dfrac{3}{4\pi}} M!2^{M} C C_{M,r}.
\end{equation*}
\end{thm}

\begin{ex}
Let us consider $k=1$, then $n=2k+1=3$, $M=1$ and $B_0(t)=(1-t)^3,$ $B_1(t)= 3t(1-t)^2$. It
follows that 
\begin{equation*}
q(t)=B_0(t)+B_1(t)= 2t^3-3t^2+1
\end{equation*}
and 
\begin{equation*}
q^{\prime}(t)=6t^2-6t.
\end{equation*}
Hence, the first derivative of $k_{\eps,r}(\mu), \mu \in [-1,1]$ is
\begin{equation}
\begin{split}
\dfrac{d}{d\mu} k(\mu)&=\dfrac{d}{d\mu} \phi(\arccos\mu)= \phi^{\prime
}(\arccos\mu) \dfrac{-1}{\sqrt{1-\mu^2}}\\&=\bigg[\frac{6}{\varepsilon^3}
(\arccos \mu-r+\varepsilon)^2-\frac{6}{\varepsilon^3} (\arccos
\mu-r+\varepsilon)\bigg] \dfrac{-1}{\sqrt{1-\mu^2}} \\
& =\frac{6}{\varepsilon^3} (\arccos \mu-r+\varepsilon)(\arccos \mu-r) \dfrac{%
-1}{\sqrt{1-\mu^2}}.
\end{split}%
\end{equation}
Accordingly for $b_{\ell,\eps}^r$ one obtains
\begin{equation*}
|b_{\ell,\varepsilon}|\leq \bigg|\dfrac{1}{2\ell+1} \sqrt{\dfrac{2\ell+1}{4\pi}}
\int_{-1}^{1} \dfrac{d}{d\mu} k(\mu) (P_{\ell+1}(x)-P_{\ell-1}(x))\,dx\bigg|\leq 
\dfrac{ C_{r}}{\ell^{1/2}\varepsilon^3}.
\end{equation*}
\end{ex}
We give some values of $b_{\ell,\varepsilon}^r$ in figure \ref{fig:fig2.jpg};
the graphic was realized choosing the parameters as $\varepsilon=\frac{1}{2}$ and $r=\frac{\pi}{4}
$. 

\begin{figure}[h!]
\label{fig2.jpg} \centering
\includegraphics[width=0.8\linewidth]{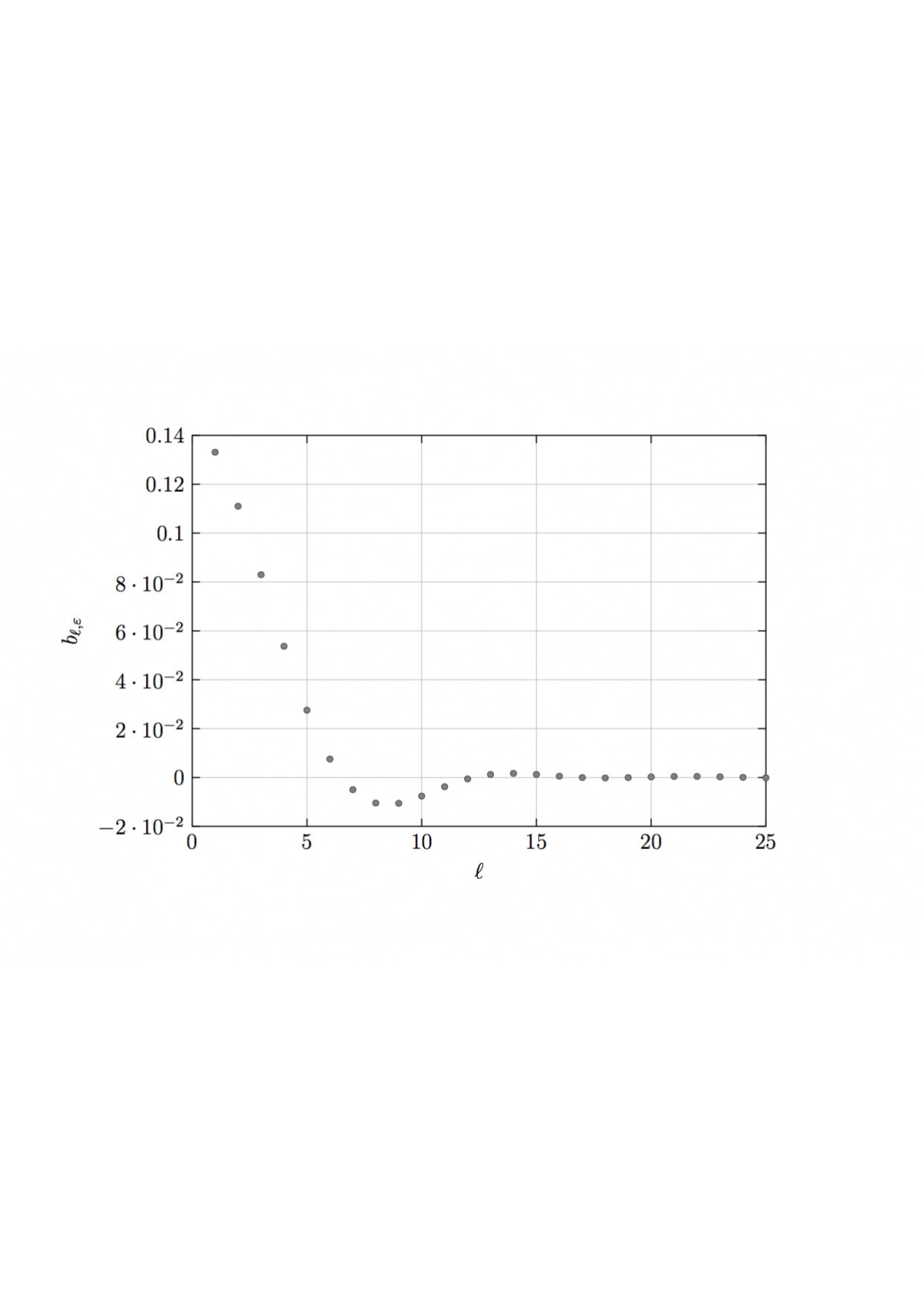}
\caption{First values of $b_{\ell,1/2}^{\protect\pi/4}$ varying $\ell$.}
\label{fig:fig2.jpg}
\end{figure}

\begin{ex}\label{ex2}
Choosing $k=2$, one has $n=5$, $M=2$ and $B_0(t)=(1-t)^5$, $B_1(t)=5t(1-t)^4$
and $B_{2}(t)=10t^2(1-t)^3$. One finds that 
\begin{equation*}
q(t)=-6t^5+15t^4-10t^3+1,
\end{equation*}
\begin{equation*}
q^{\prime}(t)=-30 t^4+60t^3-30t^2
\end{equation*}
and 
\begin{equation*}
q^{\prime \prime} (t)= -120 t^3+180t^2-60t.
\end{equation*}
Same computations to the previous example give

\begin{equation*}
\bigg|\dfrac{d^2}{d\mu^2} k(\mu)\bigg| \leq \frac{C_r}{\varepsilon^5}
\end{equation*}
and 
\begin{equation*}
|b_{\ell,\varepsilon}| \leq \dfrac{C_{r}}{\ell^{3/2}\varepsilon^5}.
\end{equation*}
\end{ex}

\begin{oss}
In the table and the graphs below, we compare $b_{\ell,\eps }^r$, for $\ell =1,2,3,4,5$, for different values of $%
\varepsilon $ and the assumptions of the example \ref{ex2}
\begin{center}
\begin{tabular}{|c|c|c|c|c|}
\hline
$\ell $ & $b_{\ell ;1/2}$ & $b_{\ell ,1/4}$ & $b_{\ell ,1/8}$ & $b_{\ell
,1/10}$ \\ \hline
1 & 0.132269 & 0.188425 & 0.218866 & 0.225059 \\ \hline
2 & 0.111278 & 0.147981 & 0.163897 & 0.166747 \\ \hline
3 & 0.0843363 & 0.0983641 & 0.0987674 & 0.0982093 \\ \hline
4 & 0.0557163 & 0.0493925 & 0.0381274 & 0.0352638 \\ \hline
5 & 0.0294925 & 0.00959262 & -0.00638063 & -0.0097985 \\ \hline
\end{tabular}
\end{center}
\begin{figure}[h]
\centering
\includegraphics[width=0.7\linewidth]{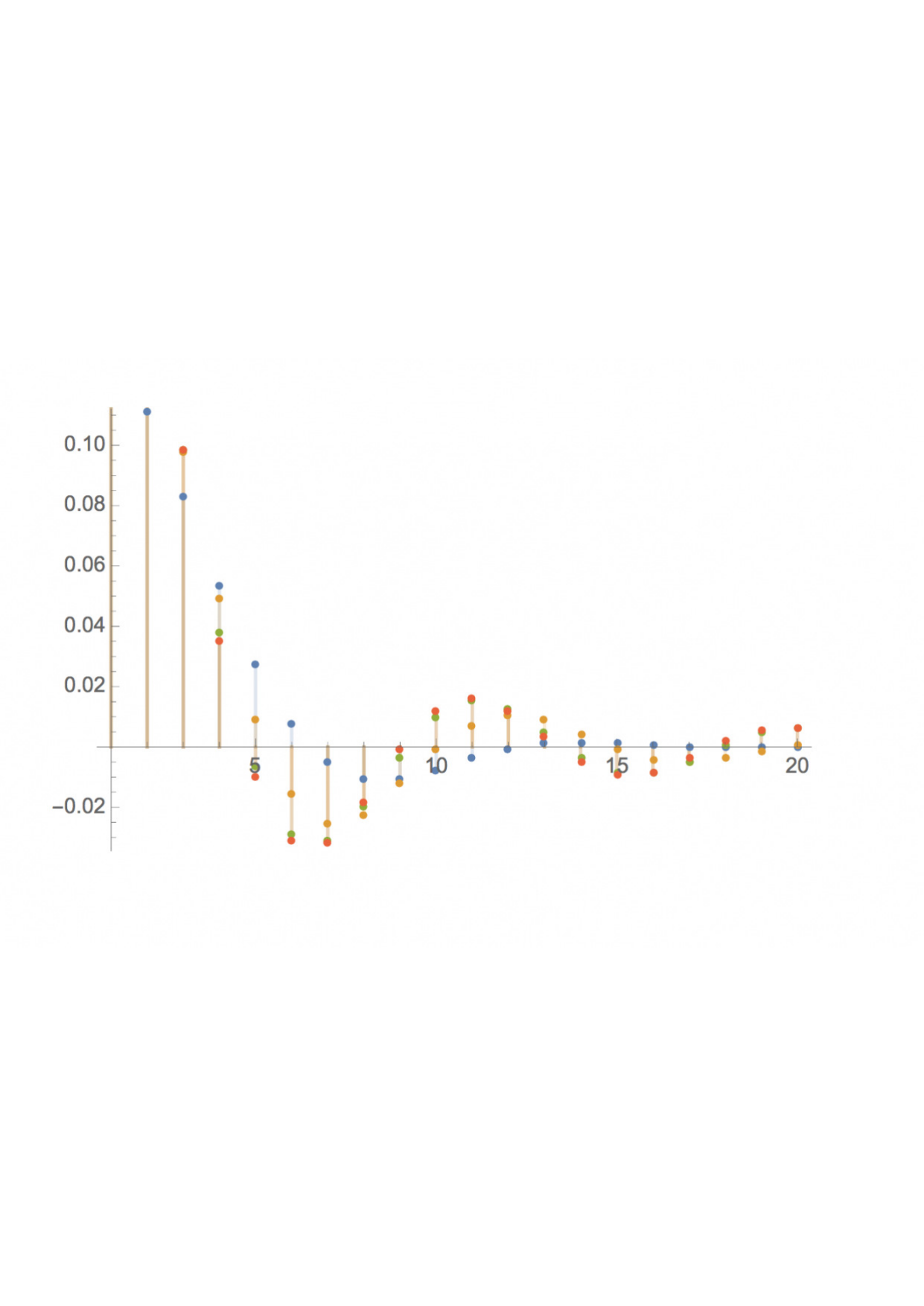}
\caption{$b_{\ell,\eps}^r$ varying $\ell$, for $\eps=\frac{1}{2},\frac{1}{4},\frac{1}{8},\frac{1}{10}$.}
\label{fig:grafbl2.jpg}
\end{figure}
\begin{figure}[h!]
\centering
\includegraphics[width=0.7\linewidth]{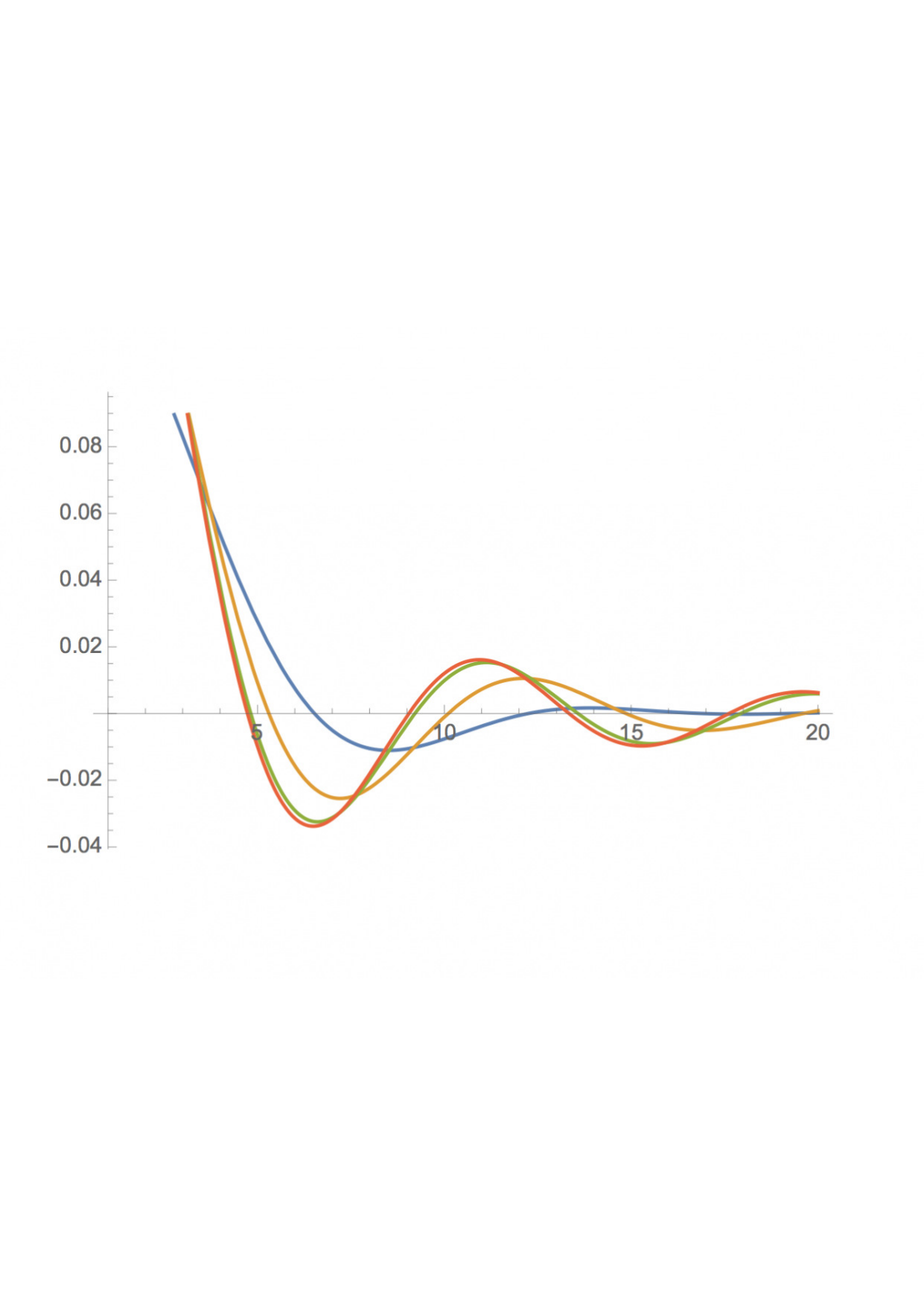}
\caption{{}}
\label{fig:grafbl.jpg}
\end{figure}
We note that the decay of the coefficients $b_{\ell,\eps}$ is actually faster than the one given by our upper bound. 

\end{oss}

\begin{oss}	It is quite natural to compare our result with the work of Lang and Schwab in \cite{Lang e Schwab}. We report here, briefly, their findings. Hence, they define the space $V^n(-1,1)$ as the closures of $H^n(-1,1)$, where $H^n(-1,1)$ is the standard Sobolev spaces, with respect to the weighted norms $||u||_{V^{n}(-1,1)}^2:=
	\sum_{j=0}^{n} |u|^2_{V^j(-1,1)},$ where for $j \in \mathbb{N}_0$,
	\begin{equation*}
	|u|^2_{V^j(-1,1)}:= \int_{-1}^{1} \bigg|\dfrac{\partial^j}{\partial \mu^j}
	u(\mu)\bigg|^2 (1-\mu^2)^j \,d \mu,
	\end{equation*}
	is a seminorn. Denoted as $(\frac{2\ell+1}{2}(1+\ell^{2n}), \ell \in \mathbb{N}_0)$ the sequence of weights, the authors in \cite{Lang e Schwab} show an
	isomorphism between the spaces $V^n(-1,1)$ and the
	spaces of the weights $\ell_n:= \ell^2((\frac{2\ell+1}{2}(1+\ell^{2n}), \ell \in \mathbb{N})$. 
	Precisely, they proved that, 
	for $u(\mu) \in V^n(-1, 1), n \in 
	\mathbb{N}_0, $ the sequence $%
	(\ell^{n+1/2}A_\ell, \ell \geq n)$, with $A_\ell=2\pi u_\ell$, is in $\ell^2(%
	\mathbb{N}_0)$ if and only if $(1-\mu^2)^{n/2} \frac{\partial^n}{\partial \mu^n }u(\mu)$ is
	in $L^2(-1, 1);$ namely,
	\begin{equation*}
	\dfrac{1}{(4\pi)^2} \sum_{\ell \geq n} A_\ell^2\frac{2\ell+1}{2}
	\ell^{2n}<+\infty
	\end{equation*}
	if and only if 
	\begin{equation*}
	\int_{-1}^{1} \bigg|\dfrac{\partial^n}{\partial\mu^n}u(\mu) \bigg|^2(1-\mu^2)^n \,d\mu
	<\infty.
	\end{equation*}
	More explicitly, in their proof (p. 13 \cite{Lang e Schwab}) they get that 
	\begin{equation}\label{LeS}
	\int_{-1}^{1} \bigg|\dfrac{\partial^n}{\partial\mu^n}u(\mu) \bigg|^2(1-\mu^2)^n \,d\mu=
	\sum_{\ell \geq n} A_\ell^2 \dfrac{2\ell+1}{2(4\pi)^2} \frac{(\ell+n)!}{%
		(\ell-n)!}
	\end{equation}
	and 
	\begin{equation*}
	c_1(n)\ell^{2n} \leq \frac{(\ell+n)!}{(\ell-n)!} \leq c_2(n) \ell^{2n}.
	\end{equation*}
	Although it is possible to compute explicitly the integral on the left hand side of (\ref{LeS}), this would be sufficient only for a bound on the tail behavior of the series in the right hand side of (\ref{LeS}), while, in our situation, we require a full control on any term $A_\ell^2$.
\end{oss}

\begin{oss} We refer to \cite{LMX} for the broadly similar construction of a ``spherical bump function$"$. Also, our proposal is in some sense symmetric to so-called needlets (see i.e., \cite{NPW}, \cite{NPW2}, \cite{BKMP2009} and Chapter 10 of \cite{M e Peccati}). Indeed, in the standard needlet construction one considers spherical functions with compact support in the harmonic domain and nearly-exponential decay in the real domain, whereas here the converse is studied: functions with compact support in the real domain and polynomial decays in the harmonic space.
\end{oss}

\section{Proof of the main result}
Here we finally prove Theorem \ref{tlc}; as stated at the beginning of the paragraph, we do that studying each single term of the chaotic projection in (\ref{serie-var}) separately. We divide in small different subsections the results obtained for these components.\\\\
From now on, $1_{B,\eps}(x)$ is the function given in Remark \ref{approx}, satisfying (\ref{numeretto3}) and Assumption \ref{cond-eps}.

\subsubsection{ First chaotic component}

The variance of the first chaotic component, i.e., Proposition \ref{cor1chaos} follows as a corollary of the lemma below.

\begin{lem}
	\label{1chaos} For any $\varepsilon>0,$ satisfying Assumption \ref{cond-eps},
	\begin{equation}\label{varianza1}
	\Var\bigg(\int_{\mathbb{S}^2} 1_{B}(x)T_\ell(x) \, dx\bigg)=\dfrac{4\pi}{2\ell+1}
	b_{\ell;\varepsilon}^2+O(\ell^{-1/2}\varepsilon^{3/2}),
	\end{equation}
	as $\ell \rightarrow 0,$ where $b_{\ell;\varepsilon}$ are the Fourier
	coefficients of $1_{B,\eps}(x)$, given by (\ref{numeretto4}).
\end{lem}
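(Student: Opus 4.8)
The plan is to exploit the decomposition already set up in \eqref{key1}: writing $1_B = (1_B - 1_{B,\eps}) + 1_{B,\eps}$ and expanding the variance, the problem splits into (i) computing exactly the variance of the smooth piece $\int_{\mathbb{S}^2} 1_{B,\eps}(x) T_\ell(x)\,dx$, and (ii) showing that the variance of the remainder $\int_{\mathbb{S}^2}(1_B - 1_{B,\eps})(x)T_\ell(x)\,dx$ together with the cross term is $O(\ell^{-1/2}\eps^{3/2})$.

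For step (i) I would use the Fourier--Legendre expansion $1_{B,\eps} = \sum_{\ell_1} b_{\ell_1;\eps} Y_{\ell_1 0}$ from \eqref{fn caratteristica} together with $T_\ell = \sqrt{4\pi/(2\ell+1)}\sum_{m} a_{\ell m} Y_{\ell m}$. Since $Y_{\ell_1 0}$ is real, orthonormality of the spherical harmonics gives $\int_{\mathbb{S}^2} Y_{\ell_1 0}(x) Y_{\ell m}(x)\,dx = \delta_{\ell_1 \ell}\delta_{m 0}$, so that only the term $\ell_1 = \ell$, $m = 0$ survives and
\[
\int_{\mathbb{S}^2} 1_{B,\eps}(x)\, T_\ell(x)\,dx = \sqrt{\frac{4\pi}{2\ell+1}}\; a_{\ell 0}\, b_{\ell;\eps}.
\]
Because $a_{\ell 0}$ is a standard real Gaussian of unit variance, this yields the announced clean term $\Var\big(\int_{\mathbb{S}^2} 1_{B,\eps} T_\ell\big) = \frac{4\pi}{2\ell+1} b_{\ell;\eps}^2$, exactly.

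For step (ii) I would pass to the covariance kernel: using $\mathbb{E}[T_\ell(x)T_\ell(y)] = P_\ell(\cos d(x,y))$, the remainder variance equals $\int\int (1_B - 1_{B,\eps})(x)(1_B - 1_{B,\eps})(y)\, P_\ell(\cos d(x,y))\,dx\,dy$. The factor $1_B - 1_{B,\eps}$ is bounded by $1$ and supported on the thin band $\{\,r - \eps \le \theta \le r\,\}$, whose area is $2\pi\int_{r-\eps}^r \sin\theta\,d\theta = O(\eps)$. The essential analytic input is the $L^2$-concentration of the Legendre kernel, $\int_{\mathbb{S}^2} P_\ell(\langle x,y\rangle)^2\,dy = \frac{4\pi}{2\ell+1}$, which follows from the addition formula \eqref{add formula} and orthonormality. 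Applying Cauchy--Schwarz over the product band, the $L^2$ norm of the product $(1_B-1_{B,\eps})(x)(1_B-1_{B,\eps})(y)$ contributes $O(\eps)$ while the $L^2$ norm of $P_\ell$ restricted to the band contributes $O\big((\eps/\ell)^{1/2}\big)$, producing the bound $O(\ell^{-1/2}\eps^{3/2})$. The cross term is then controlled by the Cauchy--Schwarz inequality for covariances, $|\Cov(A,B)| \le \sqrt{\Var(A)\,\Var(B)}$ with $A$ the remainder integral and $B$ the smooth integral, and is of the same or lower order once one uses $b_{\ell;\eps}^2 = O(1)$ from Remark \ref{remark b0}.

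The main obstacle is precisely the control of the remainder: since $P_\ell$ does not decay fast enough pointwise to bound $\int |P_\ell|$ directly, one cannot simply estimate the area of the band against a pointwise bound on the kernel. The correct device is to trade pointwise decay for the $L^2$ mass $\frac{4\pi}{2\ell+1}$ of $P_\ell$ against the $O(\eps)$ smallness of the band via Cauchy--Schwarz, which is what simultaneously produces the $\ell^{-1/2}$ and the $\eps^{3/2}$ gains; the careful bookkeeping of the cross term, so that it too sits at the required order, is the delicate point. Finally, Lemma \ref{1chaos} combined with the coefficient bound $|b_{\ell;\eps}| \le K_{M,r}/(\ell^{M-1/2}\eps^{2M+1})$ from Theorem \ref{mollifier}, under the admissibility condition \eqref{eps cond} on a suitable sequence $\eps = \eps_\ell$, makes both the clean term and the error $o(1/\ell)$, thereby yielding Proposition \ref{cor1chaos}.
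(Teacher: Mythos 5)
Your argument is essentially the paper's: the same splitting $1_B=(1_B-1_{B,\eps})+1_{B,\eps}$, the same exact evaluation $\Var\big(\int_{\mathbb{S}^2}1_{B,\eps}(x)T_\ell(x)\,dx\big)=\tfrac{4\pi}{2\ell+1}b_{\ell;\eps}^2$ (you read it off directly from $\int_{\mathbb{S}^2}1_{B,\eps}T_\ell=\sqrt{4\pi/(2\ell+1)}\,a_{\ell 0}\,b_{\ell;\eps}$ via orthonormality, the paper obtains the same thing through the addition formula and the orthogonality relations), and the same Cauchy--Schwarz trade of the $O(\eps)$ mass of the band against $\int_{\mathbb{S}^2}P_\ell(\langle x,y\rangle)^2\,dy=\tfrac{4\pi}{2\ell+1}$ for the remainder, which correctly yields $O(\ell^{-1/2}\eps^{3/2})$. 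The one point to tighten is the cross term: the bound $\sqrt{\Var(A)\Var(B)}$ with only $b_{\ell;\eps}^2=O(1)$ gives $O(\ell^{-3/4}\eps^{3/4})$, which in the regime actually used later ($\eps=o(\ell^{-1/3})$) is \emph{larger} than the stated $O(\ell^{-1/2}\eps^{3/2})$ --- it is still $o(1/\ell)$, so Proposition \ref{cor1chaos} is unaffected, but to get the lemma's exact error order you should invoke the strong decay $|b_{\ell;\eps}|\ll \ell^{-5/2}$ that condition \eqref{eps cond} forces through Theorem \ref{mollifier}, rather than the crude $O(1)$ bound.
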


\begin{proof}[Proof of Lemma \ref{1chaos}]
The first chaotic projection can be written as 

$$\int_B T_\ell (x) \, dx= \int_{\mathbb{S}^2} [1_B(x)-1_{B;\eps}(x)]T_\ell(x) \,dx+\int_{\mathbb{S}^2} 1_{B;\eps}(x)T_\ell(x) \,dx$$
and consequently, its variance as
\begin{equation}\label{a1}
	\begin{split}
	\Var \bigg(\int_B T_\ell (x) \, dx\bigg)&=\Var \bigg( \int_{\mathbb{S}^2} [1_B(x)-1_{B;\eps}(x)]T_\ell(x) \,dx \bigg) +\Var \bigg(\int_{\mathbb{S}^2} 1_{B;\eps}(x)T_\ell(x) \,dx\bigg)\\&
	+2\mathbb{E}\bigg[ \int_{\mathbb{S}^2 \times \mathbb{S}^2 } (1_B(x)-1_{B;\eps}(x))1_{B;\eps} (y) T_\ell(x)T_\ell(y) \, dx \,dy \bigg].
	\end{split}
\end{equation}
For the first variance of (\ref{a1}) it holds that
\begin{equation}\label{two}
	\begin{split}
\Var\bigg(\int_{\mathbb{S}^2} (1_B(x)-1_{B;\eps}(x))T_\ell(x) \,dx\bigg)& = \int_{\mathbb{S}^2 \times \mathbb{S}^2 } (1_B(x)-1_{B;\eps}(x)) (1_B(y)-1_{B;\eps}(y)) \mathbb{E}[T_\ell(x)T_\ell(y)]\, dx dy\\&
\leq \int_{\mathbb{S}^2}|1_B(x)-1_{B;\eps}(x) | \bigg( \int_{\mathbb{S}^2}| 1_B(y)-1_{B;\eps}(y) | |P_\ell(\langle x,y \rangle)|\,dy \bigg)\,dx 
	\end{split}
\end{equation}
and applying the Cauchy-Schwarz inequality to the second integral, (\ref{two}) is bounded by 
\begin{equation}\label{non}
\begin{split}
&\leq \int_{\mathbb{S}^2}|1_B(x)-1_{B;\eps}(x) | \bigg( \int_{\mathbb{S}^2}| 1_B(y)-1_{B;\eps}(y) |^2\,dy \bigg)^{1/2} \bigg( \int_{\mathbb{S}^2}| P_\ell(\langle x,y \rangle)|^2\,dy \bigg)^{1/2}\,dx \\& \leq \sqrt{\dfrac{2}{2\ell+1}} 2\pi \sqrt{2\pi} \eps \sqrt{\eps};
\end{split} 
\end{equation}
the third term in (\ref{a1}) is as small as this one by Cauchy-Schwarz inequality.
Concerning the second variance in (\ref{a1}), one has
\begin{equation}\label{a2}
\begin{split}
\mbox{Var}\bigg( \int_{\mathbb{S}^2} 1_{B;\eps}(x) T_\ell (x) \, dx \bigg)&= \mathbb{E}\bigg[ \bigg(\int_{\mathbb{S}^2}  1_{B;\eps}(x) T_\ell (x) \, dx\bigg)^2\bigg]= \int_{\mathbb{S}^2\times \mathbb{S}^2}  1_{B;\eps}(x)  1_{B;\eps}(y) \mathbb{E}[T_\ell (x) T_\ell(y)] \,dx\,dy\\&
=\int_{\mathbb{S}^2 \times \mathbb{S}^2} 1_{B;\eps}(x) 1_{B;\eps}(y)P_\ell (\langle x,y\rangle) \,dx \,dy.
\end{split}
\end{equation}
Through the Addition formula (\cite{M e Peccati} p. 66):
	$$\sum_{m=-\ell}^{\ell} \bar{Y}_{\ell m}(x) Y_{\ell m}(y)=\dfrac{2\ell+1}{4\pi} P_\ell(\langle x,y \rangle)$$
	 and the expansion
	 \begin{equation}\label{alessio3}
	 	1_{B,\varepsilon}(x)=\sum_{\ell=0}^{\infty} b_{\ell;\varepsilon} Y_{\ell 0}(x),
	 		 \end{equation}
	 		  it is possible to write (\ref{a2}) as
	\begin{equation}\label{ss}
	\int_{\mathbb{S}^2 \times \mathbb{S}^2} \dfrac{4\pi}{2\ell+1} \sum_{m=-\ell}^{\ell} \overline{Y_{\ell m}(x)} Y_{\ell m}(y) \sum_{\ell_1=0}^{\infty} b_{\ell_1;\eps} Y_{\ell_1 0}(x) \sum_{\ell_2=0}^{\infty} b_{\ell_2;\eps} Y_{\ell_2 0}(y) \,dx \,dy.
	\end{equation}
Condition (\ref{cond2}) implies that the series $\sum_{\ell=0} b_{\ell,\eps}Y_{\ell 0}(x)$ is absolutely convergent; indeed
	$$\sum |b_{\ell,\eps}^r||Y_{\ell}(x)|\sim \sum |b_{\ell,\eps}^r| \sqrt{\ell}< \sum \frac{1}{\ell^2}<\infty,$$
	 and so we can exchange the series with the integral to derive that (\ref{ss}) equals to 
	\begin{equation}\label{equazione}
	\begin{split}
	& \dfrac{4\pi}{2\ell+1}  \sum_{m=-\ell}^{\ell} \sum_{\ell_1=0}^{\infty} b_{\ell_1;\eps} \sum_{\ell_2=0}^{\infty} b_{\ell_2;\eps} \int_{\mathbb{S}^2 \times \mathbb{S}^2} \overline{Y_{\ell m}(x)} Y_{\ell m}(y) Y_{\ell_1 0}(x)  Y_{\ell_2 0}(y) \,dx \,dy=\\&
	=\dfrac{4\pi}{2\ell+1} \sum_{m=-\ell}^{\ell} \sum_{\ell_1=0}^{\infty} b_{\ell_1;\eps} \sum_{\ell_2=0}^{\infty} b_{\ell_2;\eps} \int_{\mathbb{S}^2} \overline{Y_{\ell m}(x)} Y_{\ell_1 0}(x) \,dx \int_{\mathbb{S}^2} Y_{\ell m}(y) Y_{\ell_2 0}(y)\,dy.
	\end{split}
	\end{equation}
	The orthogonality condition (\cite{M e Peccati} eq. (3.39) p. 66)
	\begin{equation}
	\int_{\mathbb{S}^2}  \overline{Y_{\ell m}(x)} Y_{\ell' m'}(x) \,dx = \delta_{\ell'}^{\ell} \delta_{m'}^{m},
	\end{equation}
reduces (\ref{equazione}) to
\begin{equation}\label{ics}
\dfrac{4\pi}{2\ell+1}\sum_{m=-\ell}^{\ell} \sum_{\ell_1=0}^{\infty} b_{\ell_1,\eps} \sum_{\ell_2=0}^{\infty} b_{\ell_2;\eps} \delta_{\ell_1}^{\ell} \delta_{0}^{m}\delta_{\ell_2}^{\ell} \delta_{0}^{m}= \dfrac{4\pi}{2\ell+1} b_{\ell;\eps}^2 
\end{equation}
and then the variance (\ref{a2}) is
	$$\Var\bigg(\int_{\mathbb{S}^2} 1_{B;\eps}(x) T_\ell (x) \, dx \bigg)=\dfrac{4\pi}{2\ell+1} b_{\ell;\eps}^2;$$
thus (\ref{ics}), (\ref{two}) lead to the thesis of the lemma.
\end{proof}
Now, Proposition \ref{cor1chaos} is proven by choosing a sequence $\eps=\eps_{\ell}$ satisfying Assumption \ref{cond-eps}.


\subsubsection{Second chaotic component}
In this subsection we prove Proposition \ref{2chaotic}; to this aim we introduce the two lemmas below, whose proofs can be found in Appendix A

\begin{lem}\label{lemma1} Under the assumptions of Proposition \ref{2chaotic}, one has that
\begin{equation}\label{smoothvar}
\Var\bigg(\int_{\mathbb{S}^2} 1_{B;\varepsilon}(x) H_2(T_\ell(x)) \,dx \bigg)=8
\pi \sum_{\ell_1} b_{\ell_1;\varepsilon}^2 \dfrac{1}{2\ell_1+1} \bigg(%
C_{\ell 0 \ell 0}^{\ell_1 0}\bigg)^2,
\end{equation}
where $\{C_{\ell 0 \ell 0}^{\ell_1 0}\}$ are the Clebsch-Gordan coefficients 
(see \cite{quantum theory} or the Appendix). 
\end{lem}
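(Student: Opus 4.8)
The plan is to compute the variance of the second-chaos projection $\int_{\mathbb{S}^2} 1_{B;\varepsilon}(x) H_2(T_\ell(x))\,dx$ by reducing it to an integral of products of spherical harmonics, following exactly the template already sketched for Lemma \ref{1chaos}. First I would use the fact that $H_2(t)=t^2-1$ together with the diagram-formula identity for second-chaos random variables: for a centred unit-variance Gaussian field, $\mathrm{Cov}(H_2(T_\ell(x)),H_2(T_\ell(y)))=2\,\mathbb{E}[T_\ell(x)T_\ell(y)]^2=2P_\ell(\langle x,y\rangle)^2$. Hence
\begin{equation*}
\Var\bigg(\int_{\mathbb{S}^2} 1_{B;\varepsilon}(x)H_2(T_\ell(x))\,dx\bigg)=2\int_{\mathbb{S}^2\times\mathbb{S}^2} 1_{B;\varepsilon}(x)1_{B;\varepsilon}(y)\,P_\ell(\langle x,y\rangle)^2\,dx\,dy.
\end{equation*}

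**Next** I would expand both copies of $P_\ell$ via the addition formula (\ref{add formula}), so that $P_\ell(\langle x,y\rangle)^2=\big(\tfrac{4\pi}{2\ell+1}\big)^2\sum_{m,m'}\overline{Y_{\ell m}(x)}Y_{\ell m}(y)\overline{Y_{\ell m'}(x)}Y_{\ell m'}(y)$, and substitute the Fourier expansion $1_{B;\varepsilon}=\sum_{\ell_1}b_{\ell_1;\varepsilon}Y_{\ell_1 0}$ (and likewise in the $y$ variable with index $\ell_2$). As in the first-chaos case, the absolute convergence guaranteed by condition (\ref{eps cond}) and Proposition \ref{coeff} lets me exchange the sums with the integral. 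The double integral then factorizes into a product of two single-sphere Gaunt integrals of the form $\int_{\mathbb{S}^2}\overline{Y_{\ell m}(x)}\,\overline{Y_{\ell m'}(x)}\,Y_{\ell_1 0}(x)\,dx$, which are evaluated by the triple-product formula (\ref{3.64 libro Domenico}). This introduces the Clebsch-Gordan coefficients $C^{\ell_1 0}_{\ell 0\,\ell 0}$ for the ``magnetic'' part $m_1=m_2=0$ and $C^{\ell_1 m''}_{\ell m\,\ell m'}$ for the general part; the selection rules force $\ell_2=\ell_1$ and $m'=-m$ (up to the conventions in the Appendix), collapsing the double sum over $\ell_1,\ell_2$ to a single sum over $\ell_1$.

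**The key combinatorial step** is then to sum the squared Clebsch-Gordan coefficients over the magnetic quantum number $m$. Using the unitarity (orthogonality) relation $\sum_{m}\big(C^{\ell_1 0}_{\ell m\,\ell(-m)}\big)^2=\tfrac{2\ell_1+1}{2\ell+1}\big(C^{\ell_1 0}_{\ell 0\,\ell 0}\big)^2$ — a standard identity recorded in the Appendix — the sum over $m$ produces exactly the factor $(2\ell_1+1)$ that, when combined with the prefactors $\big(\tfrac{4\pi}{2\ell+1}\big)^2$, the leading $2$, and the two $\tfrac{1}{2\ell_1+1}$ normalizations, yields the claimed constant $8\pi\,\tfrac{1}{2\ell_1+1}$. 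Assembling everything gives
\begin{equation*}
\Var\bigg(\int_{\mathbb{S}^2} 1_{B;\varepsilon}(x)H_2(T_\ell(x))\,dx\bigg)=8\pi\sum_{\ell_1}b_{\ell_1;\varepsilon}^2\,\frac{1}{2\ell_1+1}\big(C^{\ell_1 0}_{\ell 0\,\ell 0}\big)^2,
\end{equation*}
which is (\ref{smoothvar}).

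**The main obstacle** I anticipate is bookkeeping the Clebsch-Gordan algebra correctly: keeping track of the phase conventions relating $Y_{\ell,-m}$ to $\overline{Y_{\ell m}}$, applying the correct orthogonality/symmetry relation among the $C$-coefficients, and verifying that the selection rules genuinely force both the diagonal collapse $\ell_2=\ell_1$ and the pairing $m'=-m$. A secondary care point is justifying the interchange of the (doubly infinite) summations with the double integral, which rests on the absolute-convergence bound from Proposition \ref{coeff} under (\ref{eps cond}); this is routine but must be stated. Everything else — the $H_2$ covariance identity and the Gaunt evaluation — is a direct application of results already quoted, so I expect the proof to be essentially a careful, largely mechanical computation deferred to the Appendix.
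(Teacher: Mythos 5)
Your overall route is exactly the paper's (Appendix A.1): write $\mathbb{E}[H_2(T_\ell(x))H_2(T_\ell(y))]=2P_\ell(\langle x,y\rangle)^2$, expand $P_\ell^2$ by the addition formula (\ref{add formula}) and $1_{B;\varepsilon}$ by (\ref{fn caratteristica}), interchange sums and integrals, and evaluate the resulting Gaunt integrals by (\ref{3.64 libro Domenico}). Up to that point the proposal matches the paper step for step.

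The step that does not go through as written is your ``key combinatorial step''. The identity you invoke,
\begin{equation*}
\sum_{m}\Big(C^{\ell_1 0}_{\ell m\,\ell(-m)}\Big)^2=\frac{2\ell_1+1}{2\ell+1}\Big(C^{\ell_1 0}_{\ell 0\,\ell 0}\Big)^2,
\end{equation*}
is not one of the identities recorded in the Appendix and is in fact false: by the unitarity relation (\ref{unitary prop}) (taking $j=j'=\ell_1$, $m=m'=0$, the selection rule forcing $m_2=-m_1$) the left-hand side equals $1$ identically in $\ell,\ell_1$, whereas the right-hand side equals, e.g., $(2\ell+1)^{-2}$ when $\ell_1=0$. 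What actually happens in the paper is a three-step chain: (i) each Gaunt integral produces, besides the factor $C^{\ell 0}_{\ell 0\,\ell_i 0}$, a ``magnetic'' coefficient $C^{\ell m_2}_{\ell m_1\,\ell_1 0}$, which is first converted via the 3j symmetries (\ref{symmetry p 10}) into $(-1)^{\ell+m_2+\ell_1}\sqrt{(2\ell+1)/(2\ell_1+1)}\,C^{\ell_1 0}_{\ell\, -m_1\,\ell\, m_2}$; (ii) the selection rule then forces $m_1=m_2$ and the genuine unitarity relation gives $\sum_{m}C^{\ell_1 0}_{\ell -m\,\ell m}C^{\ell_2 0}_{\ell -m\,\ell m}=\delta^{\ell_2}_{\ell_1}$, so the sum over the magnetic index contributes a factor $1$ (together with the diagonal collapse), not a factor $(2\ell_1+1)$; (iii) the residual $1/(2\ell_1+1)$ in (\ref{smoothvar}) comes from the final symmetry conversion $(C^{\ell 0}_{\ell 0\,\ell_1 0})^2=\frac{2\ell+1}{2\ell_1+1}(C^{\ell_1 0}_{\ell 0\,\ell 0})^2$, the $(2\ell_1+1)$ from the Gaunt prefactors and the powers of $(2\ell+1)$ cancelling to leave $8\pi/(2\ell_1+1)$ after the factor $2!$ is included. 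So the architecture of your proof is right and the final formula you state is the correct one, but the bookkeeping you yourself flagged as the main risk is indeed where the argument as stated breaks; replacing the false identity by the chain (i)--(iii) repairs it.
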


\begin{lem}\label{lemma2}
There exist two strictly positive constants $c_1$ and $c_2$ such that 
\begin{equation}
\begin{split}
\dfrac{c_1}{\ell}\leq \Var\bigg(\int_{\mathbb{S}^2}1_{B;\varepsilon}(x)
H_2(T_\ell(x)) \,dx \bigg)&=8 \pi \sum_{\ell_1} b_{\ell_1;\varepsilon}^2 
\dfrac{1}{(2\ell+1)} \bigg(C_{\ell 0 \ell 0}^{\ell_1 0}\bigg)^2 \leq \dfrac{%
c_2}{\ell}.
\end{split}%
\end{equation}
as $\ell \rightarrow \infty.$
\end{lem}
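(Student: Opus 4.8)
The plan is to prove the two one-sided bounds by completely different mechanisms: the lower bound will come from a single, explicit Clebsch--Gordan term, while the upper bound is cleanest if we step back from the series and exploit isotropy. Throughout I take for granted the identity furnished by Lemma \ref{lemma1}, namely
\[
\Var\left(\int_{\mathbb{S}^2}1_{B;\varepsilon}(x)H_2(T_\ell(x))\,dx\right)=8\pi\sum_{\ell_1}b_{\ell_1;\varepsilon}^2\,\frac{(C_{\ell 0\ell 0}^{\ell_1 0})^2}{2\ell_1+1},
\]
in which \emph{every} summand is nonnegative.

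For the lower bound I would simply retain the $\ell_1=0$ term. Specializing the Gaunt relation (\ref{3.64 libro Domenico}) (or, equivalently, using $\int_{-1}^1 P_\ell(t)^2\,dt=\frac{2}{2\ell+1}$) gives the elementary value $(C_{\ell 0\ell 0}^{00})^2=\frac{1}{2\ell+1}$, so that
\[
\Var\left(\int_{\mathbb{S}^2}1_{B;\varepsilon}(x)H_2(T_\ell(x))\,dx\right)\ge \frac{8\pi\,b_{0;\varepsilon}^2}{2\ell+1}.
\]
Since Remark \ref{remark b0} keeps $b_{0;\varepsilon}$ bounded below by a strictly positive constant depending only on $r$, this already yields $\Var\ge c_1/\ell$. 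For the upper bound I would avoid estimating the series term by term and instead return to the covariance representation
\[
\Var\left(\int_{\mathbb{S}^2}1_{B;\varepsilon}H_2(T_\ell)\right)=2\int_{\mathbb{S}^2}\int_{\mathbb{S}^2}1_{B;\varepsilon}(x)\,1_{B;\varepsilon}(y)\,P_\ell(\langle x,y\rangle)^2\,dx\,dy.
\]
Using $0\le 1_{B;\varepsilon}\le 1$ together with the isotropy identity $\int_{\mathbb{S}^2}P_\ell(\langle x,y\rangle)^2\,dy=\frac{4\pi}{2\ell+1}$, which is independent of $x$, the double integral is dominated by $\frac{8\pi}{2\ell+1}\int_{\mathbb{S}^2}1_{B;\varepsilon}\le \frac{16\pi^2(1-\cos r)}{2\ell+1}=O(1/\ell)$. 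If one prefers to remain inside the Clebsch--Gordan framework, the same conclusion follows from the uniform estimate $\frac{(C_{\ell 0\ell 0}^{\ell_1 0})^2}{2\ell_1+1}=\frac12\int_{-1}^1 P_\ell^2P_{\ell_1}\le \frac12\int_{-1}^1P_\ell^2=\frac{1}{2\ell+1}$ (using $|P_{\ell_1}|\le 1$ on $[-1,1]$) combined with $\sum_{\ell_1}b_{\ell_1;\varepsilon}^2=\|1_{B;\varepsilon}\|_{L^2(\mathbb{S}^2)}^2\le 4\pi$ from Remark \ref{norma}.

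The hard part, and the reason I would route the upper bound through isotropy (or through the uniform estimate above), is that a naive term-by-term majorization of the series is too lossy: bounding $b_{\ell_1;\varepsilon}^2$ by its maximum $b_{0;\varepsilon}^2$ leaves $\sum_{\ell_1}\frac{(C_{\ell 0\ell 0}^{\ell_1 0})^2}{2\ell_1+1}$, whose mass is spread over $\ell_1$ up to order $\ell$, so that the factor $\frac{1}{2\ell_1+1}$ threatens to produce a spurious logarithmic factor $\log\ell/\ell$. The clean way around this is precisely the observation that, after integrating one variable over the \emph{full} sphere, the inner integral equals $\frac{4\pi}{2\ell+1}$ exactly; equivalently, the single inequality $\frac{(C_{\ell 0\ell 0}^{\ell_1 0})^2}{2\ell_1+1}\le\frac{1}{2\ell+1}$ transfers the whole $\ell$-dependence outside the sum and lets the finite total mass $\sum_{\ell_1}b_{\ell_1;\varepsilon}^2\le 4\pi$ close the estimate. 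Collecting the two bounds gives $\frac{c_1}{\ell}\le \Var(\cdot)\le\frac{c_2}{\ell}$ with constants independent of $\varepsilon$, as claimed.
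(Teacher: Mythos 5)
Your proof is correct and follows essentially the same route as the paper: the lower bound is obtained, exactly as in the paper, by retaining the single $\ell_1=0$ term and invoking Remark \ref{remark b0}, while your Clebsch--Gordan variant of the upper bound (the uniform estimate $\frac{(C_{\ell 0 \ell 0}^{\ell_1 0})^2}{2\ell_1+1}\le\frac{1}{2\ell+1}$ together with $\sum_{\ell_1}b_{\ell_1;\varepsilon}^2\le 4\pi$ from Remark \ref{norma}) is precisely the paper's argument, there derived from the Wigner $3j$ bound rather than from the identity $\frac{(C_{\ell 0 \ell 0}^{\ell_1 0})^2}{2\ell_1+1}=\frac12\int_{-1}^{1}P_\ell^2P_{\ell_1}$. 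Your preferred isotropy route for the upper bound is just the Parseval-dual phrasing of the same estimate and is equally valid; note only that the paper does not in fact incur the $\log\ell/\ell$ loss you warn against, since it uses the same uniform bound you do rather than majorizing $b_{\ell_1;\varepsilon}^2$ by its maximum.
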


\begin{proof}[Proof of Proposition \ref{2chaotic}]

The variance of the second chaotic component can be written as


	\begin{equation}\label{one}
		\begin{split}
	\Var\bigg[ \int_{\mathbb{S}^2} 1_B(x) H_2(T_\ell(x)) \,dx \bigg]&=\mathbb{E}\bigg[\int_{\mathbb{S}^2} (1_B(x)-1_{B;\varepsilon}(x) )H_2(T_\ell(x)) \,dx\bigg]^2+\Var\bigg[\int_{\mathbb{S}^2} 1_{B,\varepsilon}(x) H_2(T_\ell(x)) \,dx\bigg]\\&
	+2\mathbb{E}\bigg[\int_{\mathbb{S}^2 \times \mathbb{S}^2 } 1_{B,\varepsilon}(x) \bigg(1_B(y)-1_{B,\varepsilon}(y)\bigg) H_2(T_\ell(x))H_2(T_\ell(y)) \,dxdy\bigg].
		\end{split}
	\end{equation}
The first integral in (\ref{one}), in view of Remark 4.10 in \cite{M e Peccati}, is
	\begin{equation}\label{key2}
		\begin{split}
	&\mathbb{E}\bigg[\int_{\mathbb{S}^2} (1_B(x)-1_{B;\varepsilon}(x) )H_2(T_\ell(x)) \,dx\bigg]^2=\\&=\int_{\mathbb{S}^2 \times \mathbb{S}^2} (1_B(x)-1_{B;\eps}(x))(1_B(y)-1_{B;\eps}(y)) \mathbb{E}[H_2(T_\ell(x))H_2(T_\ell(y))] \,dx dy\\&
	\leq 2 \int_{\mathbb{S}^2 \times \mathbb{S}^2} |1_B(x)-1_{B;\eps}(x)|   P^2_\ell(\langle x,y \rangle ) \, dxdy \leq 2 \int_{\mathbb{S}^2 } |1_{B;\eps}(x)-1_B(x)| \cdot \int_{\mathbb{S}^2} P^2_\ell(\langle x,y\rangle) \,dy \,dx \\&
	\leq 2 C \eps \dfrac{2}{2\ell+1}, 
		\end{split}
	\end{equation}
	where $C=2\pi$ has already been computed in (\ref{two}), and for the Cauchy-Schwarz inequality, the same bound holds for the third integral in (\ref{one}). Then, Lemma \ref{lemma1} with Lemma \ref{lemma2} conclude the proof.
\end{proof}


\subsubsection{ Terms of the chaotic components for $q\geq 3$}\label{chapter q}
Let us consider the chaotic components of order $q$ for $q\geq3$.\\
The variance of the third term in (\ref{serie-var}) can be bounded by its absolute value, and hence we can bound the integral on $B$ with the one computed on $\mathbb{S}^2$ as in the following way, we have that
\begin{equation}\label{sss}
\begin{split}
\mbox{Var} \bigg( \int_B \dfrac{J_3(z)}{3!}&H_3(T_\ell(x)) dx\bigg)= \dfrac{%
	J_3(z)^2}{3!} \int_B \int_B P_\ell(\langle x,y \rangle)^3 \,dx dy \leq 
\dfrac{J_3(z)^2}{3!} \int_{B} \int_{\mathbb{S}^2} | P_\ell(\langle x,y \rangle)|^3
\,dx dy \\
& =\dfrac{J_3(z)^2}{3!} 2 \pi m(B) \int_{0}^{\pi/2} | P_\ell(\cos \theta)|^3
\sin \theta \,d\theta =\dfrac{J_3(z)^2}{3!} 2 \pi m(B) \int_{0}^{1} |
P_\ell(x)|^3 \,dx;
\end{split}%
\end{equation}
the Cauchy-Schwartz inequality implies that (\ref{sss}) is 
\begin{equation}\label{alessio1}
\leq \dfrac{J_3(z)^2}{3!} 2\pi m(B) \bigg( \int_{0}^{1}P_\ell( x)^2 \,dx %
\bigg)^{1/2} \bigg(\int_{0}^{1} P_\ell( x)^4 \,dx \bigg)^{1/2}
\end{equation}
and since it has been proved in \cite{M e W 2011} and \cite{M e Mau 2015} that $\int_{0}^{1}P_\ell( x)^2 \,dx =O\big(\frac{1}{\ell}\big)$ and $\int_{0}^{1} P_\ell( x)^4 \,dx= O\big( \frac{\log \ell}{\ell^2}\big)$,
(\ref{alessio1}) has order $O\big(\frac{\sqrt{\log \ell}}{\ell \sqrt{\ell}}\big),$ as $\ell \rightarrow
\infty.$\\\\
Likewise, for the variance of the fourth chaotic projection in (\ref{serie-var}), 
we obtain that

\begin{equation*}
\begin{split}
\Var \bigg( \int_B \dfrac{J_4(z)}{4!} H_4(T_\ell(x)) dx\bigg)&= \dfrac{%
	J_4(z)^2}{(4!)^2} \int_B \int_B P_\ell(\langle x,y \rangle)^4 dx dy \\
& \leq \dfrac{J_4(z)^2}{(4!)^2} \int_{B} \int_{\mathbb{S}^2} P_\ell(\langle x,y
\rangle)^4 dx dy \\
& = \dfrac{J_4(z)^2}{(4!)^2} m(B) 2\pi \int_{0}^{1} P_\ell(x)^4 \,dx
\end{split}%
\end{equation*}
which behaves as $\dfrac{\log \ell}{\ell^2},$
as $\ell \rightarrow \infty$ \cite{M e W 2011}.\\\\
Eventually, for the remaining terms in (\ref{serie-var}), in the same way we get
\begin{equation*}
\begin{split}
\Var \bigg(&\int_B \sum_{q=5}^{\infty} \dfrac{J_q(z)}{q!}
H_q(T_\ell(x)) \,dx \bigg)=\mathbb{E}\bigg[ \int_B \sum_{q=5}^{\infty} \dfrac{J_q(z)}{%
	q!}H_q(T_\ell(x)) \,dx \bigg]^2 \\
&= \sum_{q=5}^{\infty} \dfrac{J_q(z)^2}{(q!)^2}\int_{B\times B}
\mathbb{E} [H_q(T_\ell(x)) H_q(T_\ell(y))] \,dx dy = \sum_{q=5}^{\infty} \dfrac{J_q(z)^2%
}{(q!)^2} \int_{B\times B} q! P_\ell( \langle x,y\rangle)^q \,dx dy \\
& \leq \sum_{q=5}^{\infty} \dfrac{J_q(z)^2}{q!} \int_{B\times B} |P_\ell(
\langle x,y\rangle)|^q \,dx dy \leq \sum_{q=5}^{\infty} \dfrac{J_q(z)^2}{q!}
\int_{B\times \mathbb{S}^2} |P_\ell( \langle x,y\rangle)|^q \,dx dy \\
& \leq \sum_{q=5}^{\infty} \dfrac{J_q(z)^2}{q!} 2\pi m(B) \int_{0}^{\pi/2}
|P_\ell(\cos \theta)|^q \sin \theta \, d\theta =\sum_{q=5}^{\infty} \dfrac{%
	J_q(z)^2}{q!} 2\pi m(B) \int_{0}^{1} |P_\ell(x)|^q\, dx
\end{split}%
\end{equation*}
and $ \int_{0}^{1} |P_\ell(x)|^q\, dx =O\big(\frac{1}{\ell^2}\big)$ (\cite{M e W 2011bis}, Lemma 5.7 or \cite{M e Mau 2015}, Proposition 1.1).

\subsubsection{Quantitative Central Limit Theorem}
In this subsection, we finally prove Theorem \ref{tlc}, assuming Proposition \ref{4 cumulant}; the argument is quite similar to the one for the full sphere given in \cite{M e Mau 2015}. 
\begin{proof}[Proof of Theorem \ref{tlc} assuming Proposition \ref{4 cumulant}]
	As in \cite{M e Mau 2015}, we denote 
	\begin{equation*}
	S_\ell(M):=\int_B M(T_\ell(x)) \,dx,
	\end{equation*}
	with 
	\begin{equation*}
	M(T_\ell(x)):=1_{((T_\ell(x))>z)}(T_\ell(x)).
	\end{equation*}
	Now, we consider the chaotic expansion
	$$S^\prime_\ell(M):=S_\ell(M)-\mathbb{E}[S_\ell(M)]=\int_{B} \sum_{q=1}^{\infty} \dfrac{J_q(M)H_q(T_\ell(x))}{q!} \,dx,$$
	which we write as
	\begin{equation*}
	\begin{split}
	S^\prime_\ell(M)&=J_1(M)h_{\ell;1}(B)+\dfrac{J_2(M)}{2}h_{\ell;2}(B)+\dfrac{J_3(M)}{3!}h_{\ell;3}(B)+\dfrac{J_4(M)}{4!}h_{\ell;4}(B)+\int_{B} \sum_{q=5}^{\infty} \dfrac{J_q(M)H_q(T_\ell(x))}{q!} \, dx\\
	&=S_\ell(M,1)+S_\ell(M,2)
	\end{split}
	\end{equation*}
	where
	$$S_\ell(M;1)=J_1(M)h_{\ell;1}(B)+\dfrac{J_2(M)}{2}h_{\ell;2}(B)+\dfrac{J_3(M)}{3!}h_{\ell;3}(B)+\dfrac{J_4(M)}{4!}h_{\ell;4}(B),$$
	$$S_\ell(M;2)=\int_B \sum_{q=5}^{\infty} \dfrac{J_q(M)H_q(T_\ell(x))}{q!} \, dx.$$
	Hence, one has that
	\begin{equation}
	\begin{split}
	&d_W\bigg(\dfrac{S^\prime_\ell(M)}{\sqrt{\Var[S_\ell(M)]}}, \mathcal{N}(0,1)\bigg)
	\leq d_W\bigg(\dfrac{S^\prime_\ell(M)}{\sqrt{\Var[S_\ell(M)]}}, \dfrac{S_\ell(M;1)}{\sqrt{\Var[S_\ell(M)]}}\bigg)+\\&+d_W\bigg(\dfrac{S_\ell(M;1)}{\sqrt{\Var[S_\ell(M)]}}, \mathcal{N} \bigg(0, \dfrac{\Var[S_\ell(M;1)]}{\Var[S_\ell(M)]}\bigg)\bigg)+
	d_W\bigg( \mathcal{N} \bigg(0, \dfrac{\Var[S_\ell(M;1)]}{\Var[S_\ell(M)]}\bigg), \mathcal{N}(0,1) \bigg) \\&
	\leq \dfrac{1}{\sqrt{\Var[S_\ell(M)]}} \mathbb{E}\bigg[\bigg(\int_{B} \sum_{q=5}^{\infty} \dfrac{J_q(M)H_q(T_\ell(x))}{q!} \,dx\bigg)^2\bigg]^{1/2}+\\&
	+ d_W\bigg(\dfrac{S_\ell(M;1)}{\sqrt{\Var[S_\ell(M)]}}, \mathcal{N}\bigg(0, \dfrac{\Var[S_\ell(M;1)]}{\Var[S_\ell(M)]}\bigg)\bigg)+d_W\bigg( \mathcal{N}\bigg(0, \dfrac{\Var[S_\ell(M;1)]}{\Var[S_\ell(M)]}\bigg), \mathcal{N}(0,1) \bigg).
	\end{split}
	\end{equation}
	We have seen that
	$$\Var(S_\ell(M;2)) \ll \frac{1}{\ell^2}$$ and since $\Var(S_\ell(M))$ has the same asymptotic order as the second chaotic component, we have that
	$$\frac{\Var(S_\ell(M;2))}{\Var(S_\ell(M))}\ll \dfrac{1}{\ell};$$
	moreover, the triangular inequality gives 
	\begin{equation}\label{alessio2}
	\begin{split}
	d_W\bigg(\dfrac{S_\ell(M;1)}{\sqrt{\Var(S_\ell(M))}}, &\mathcal{N} \bigg(0, \dfrac{\Var(S_\ell(M;1))}{\Var(S_\ell(M))}\bigg)\bigg) \leq d_W\bigg( \frac{J_2(M)}{2\sqrt{\Var(S_\ell(M))}}h_{\ell;2}(B), \mathcal{N}\bigg(0, \dfrac{\Var(S_\ell(M;1))}{\Var(S_\ell(M))}\bigg) \bigg)+\\&
	+d_W\bigg( \frac{J_1(M)}{\sqrt{\Var(S_\ell(M))}}h_{\ell;1}(B)+\frac{J_2(M)}{2\sqrt{\Var(S_\ell(M))}}h_{\ell;2}(B)+\\&+\frac{J_3(M)}{3!\sqrt{\Var(S_\ell(M))}}h_{\ell;3}(B)+\frac{J_4(M)}{4!\sqrt{\Var(S_\ell(M))}}h_{\ell;4}(B), \mathcal{N} \bigg(0, \dfrac{\Var(S_\ell(M;1))}{\Var(S_\ell(M))}\bigg) \bigg).
	\end{split}
	\end{equation}
	
	For the first term in (\ref{alessio2}), we can use, again, the triangular inequality to obtain that
	\begin{equation}\label{fantasia}
	\begin{split}
	&d_W\bigg( \frac{J_2(M)}{2\sqrt{\Var(S_\ell(M))}}h_{\ell;2}(B), \mathcal{N}\bigg(0, \dfrac{\Var(S_\ell(M;1))}{\Var(S_\ell(M))}\bigg) \bigg)\\\leq &d_W\bigg( \frac{J^*_2(M)}{2\sqrt{\Var(S_\ell(M))}}h_{\ell;2}^*(B), \mathcal{N}\bigg(0, \dfrac{\Var(S_\ell(M;1))}{\Var(S_\ell(M))}\bigg) \bigg)+ d_W\bigg( \frac{J_2(M)h_{\ell;2}(B)}{2\sqrt{\Var(S_\ell(M))}}, \frac{J_2(M)^*h_{\ell;2}^*(B)}{2\sqrt{\Var(S_\ell(M))}}\bigg),
	\end{split}
	\end{equation}
	where $J_2(M)^*$ is the coefficient of the second chaotic component of the chaos expansion of $1_{B;\eps}$.
	In light of (\ref{bound per w}), the latter summand in (\ref{fantasia}) is an $o\bigg(\dfrac{1}{\ell}\bigg)$, thanks to the triangular inequality; whereas, by the Fourth Moment Theorem (see \cite{Peccati Nourdin}, Theorem 5.2.7), the former is
	$O\bigg( \dfrac{1}{\sqrt{\ell}} \bigg)$.  
	For the second term in (\ref{alessio2}), we have that
	\begin{equation}
	\begin{split}
	&d_W\bigg( \frac{J_1(M)}{\sqrt{\Var(S_\ell(M))}}h_{\ell;1}(B)+\frac{J_3(M)}{3!\sqrt{\Var(S_\ell(M))}}h_{\ell;3}(B)+\frac{J_4(M)}{4!\sqrt{\Var(S_\ell(M))}}h_{\ell;4}(B), 0 \bigg) \leq\\& d_W\bigg( \frac{J_1(M)}{\sqrt{\Var(S_\ell(M))}}h_{\ell;1}(B), 0 \bigg)+ d_W\bigg( \frac{J_3(M)}{3!\sqrt{\Var(S_\ell(M))}}h_{\ell;3}(B), 0 \bigg)+d_W\bigg(\frac{J_4(M)}{4!\sqrt{\Var(S_\ell(M))}}h_{\ell;4}(B), 0 \bigg);
	\end{split}
	\end{equation}
	since
	$$d_W\bigg( \frac{J_1(M)}{\sqrt{\Var(S_\ell(M))}}h_{\ell;1}(B), 0 \bigg) \leq \sqrt{\mathbb{E}\bigg[ \bigg(\frac{J_1(M)}{\sqrt{\Var(S_\ell(M))}}h_{\ell;1}(B) \bigg)^2\bigg]}=o \bigg(\dfrac{1}{\sqrt{\ell}}\bigg),$$
	
	$$d_W\bigg( \frac{J_3(M)}{\sqrt{3!\Var(S_\ell(M))}}h_{\ell;3}(B), 0 \bigg) \leq \sqrt{\mathbb{E}\bigg[ \bigg(\frac{J_3(M)}{3!\sqrt{\Var(S_\ell(M))}}h_{\ell;3}(B) \bigg)^2\bigg]}=O \bigg(\sqrt{\dfrac{\sqrt{\log \ell}}{\ell \sqrt{\ell}}}\bigg)$$
	and
	$$d_W\bigg( \frac{J_4(M)}{\sqrt{4!\Var(S_\ell(M))}}h_{\ell;4}(B), 0 \bigg) \leq \sqrt{\mathbb{E}\bigg[ \bigg(\frac{J_4(M)}{4!\sqrt{\Var(S_\ell(M))}}h_{\ell;4}(B) \bigg)^2\bigg]}=O \bigg(\sqrt{\dfrac{\log \ell}{\ell^2}}\bigg),$$
	one has that
	$$d_W\bigg(\dfrac{S_\ell(M;1)}{\sqrt{\Var(S_\ell(M))}}, \mathcal{N}\bigg(0, \dfrac{\Var(S_\ell(M;1))}{\Var(S_\ell(M))}\bigg)\bigg)=O\bigg(\dfrac{1}{\sqrt{\ell}}\bigg).$$
	Finally, Proposition 3.6.1 in \cite{Peccati Nourdin} leads to
	$$d_W\bigg( \mathcal{N}\bigg(0, \dfrac{\Var(S_\ell(M;1))}{\Var(S_\ell(M))}\bigg), \mathcal{N}(0,1) \bigg)\leq \sqrt{\frac{2}{\pi}} \bigg| \dfrac{\Var(S_\ell(M;1))}{\Var(S_\ell(M))}-1\bigg|=O\bigg(\dfrac{1}{\ell}\bigg)$$
	and the thesis of the theorem follows.
\end{proof}

\appendix

\section{Technical details}
In this section we give all the technical details of the proofs of the propositions and the lemmas whereby the main result has been proved.
\subsection{Proof of Lemma \ref{lemma1}}
\begin{proof}
	The aim here is to prove (\ref{smoothvar}). As we have already explained, the ideas is to write the integral in terms of spherical harmonics and then to exploit the properties of the Clebsch-Gordan coefficients. We split the proof in these two steps in order to make the argument clearer.\\\\
	\textit{Step 1}.
	Let us consider the left hand side of (\ref{smoothvar}), we can write it as
	\begin{equation}\label{uu}
	\begin{split}
	\mbox{Var}\bigg(\int_{\mathbb{S}^2}1_{B;\eps}(x) H_2(T_\ell(x)) \,dx \bigg)& =\mathbb{E}\bigg[ \bigg( \int_{\mathbb{S}^2}1_{B;\eps}(x) H_2(T_\ell (x)) \,dx \bigg)^2\bigg]\\&
	=\mathbb{E}\bigg[  \int_{\mathbb{S}^2 \times \mathbb{S}^2}1_{B;\eps}(x) H_2(T_\ell (x)) 1_{B;\eps}(y) H_2(T_\ell (y)) \,dx \,dy\bigg]
	\end{split}
	\end{equation}
	Exchanging the integral and the mean, we have that (\ref{uu}) is
	\begin{equation}\label{uuu}
	\begin{split}
	\\&=\int_{\mathbb{S}^2\times \mathbb{S}^2} 1_{B;\eps}(x)1_{B;\eps}(y) \mathbb{E}[H_2(T_\ell(x)) H_2(T_\ell(y))] dx dy\\&
		\end{split}
	\end{equation}
	and in view of Remark 4.10 in \cite{M e Peccati}, it follows that (\ref{uuu}) is
		\begin{equation}
	\begin{split}
	= 2! \int_{\mathbb{S}^2\times \mathbb{S}^2} 1_{B;\eps}(x)1_{B;\eps}(y)  \mathbb{E}[T_\ell(x) T_\ell(y)]^2 \,dx dy
	=2! \int_{\mathbb{S}^2\times \mathbb{S}^2} 1_{B;\eps}(x)1_{B;\eps}(y)  P_\ell( \langle x,y\rangle)^2 \,dx dy.
	\end{split}
	\end{equation}
	Along the same lines as the proof of (\ref{varianza1}), we replace $P_\ell^2(\langle x,y \rangle)$ with 
	\begin{equation}
	P_\ell(\langle x,y \rangle)^2= \bigg( \dfrac{4\pi}{2\ell+1}\bigg)^2 \sum_{m_1=-\ell}^{\ell} \sum_{m_2=-\ell}^{\ell} Y_{\ell m_1}(x)\overline{Y_{\ell m_1}(y)} \mbox{ }\overline{Y_{\ell m_2}(x)}Y_{\ell m_2}(y)
	\end{equation}
	and (\ref{alessio3}) to obtain

	\begin{equation}
	\begin{split}
	\int_{\mathbb{S}^2 \times \mathbb{S}^2 } & P_\ell(\langle x,y \rangle)^2  1_{B;\eps}(x) 1_{B;\eps}(y) \,dx dy=\\&
	\begin{split}
	= \int_{\mathbb{S}^2 \times \mathbb{S}^2 }  \bigg( \dfrac{4\pi}{2\ell+1}\bigg)^2 \sum_{m_1} \sum_{m_2} Y_{\ell m_1}(x)\overline{Y_{\ell m_2}(x)} \mbox{ } &{Y_{\ell m_2}(y)} \overline{Y_{\ell m_1}(y)}Y_{\ell m_2}(y) \times\\&
	\times\sum_{\ell_1} \sum_{\ell_2} b_{\ell_1;\eps} b_{\ell_2;\eps} Y_{\ell_1 0}(x) Y_{\ell_2 0}(y) dx dy
	\end{split}
	\end{split}
	\end{equation}
	\begin{equation}\label{integral}
	\begin{split}
	=\bigg( \dfrac{4\pi}{2\ell+1}\bigg)^2 \sum_{\ell_1}\sum_{\ell_2} \sum_{m_1} \sum_{m_2}  b_{\ell_1;\eps} b_{\ell_2;\eps} \int_{\mathbb{S}^2}   Y_{\ell m_1}(x) \mbox{ } &Y_{\ell_1 0}(x) \overline{Y_{\ell m_2}(x)} \,dx \times \\&
	\times \int_{\mathbb{S}^2} Y_{\ell m_2}(y)  Y_{\ell_2 0}(y)  \mbox{ } \overline{Y_{\ell m_1}(y)} \,dy;
	\end{split}
	\end{equation}
we already justified the exchange between the series and the integral in Lemma \ref{1chaos}, which follows from (\ref{cond2}).
Now, (\ref{integral}) is known as a Gaunt integral and it is given in \cite{M e Peccati} by the following relation:
	\begin{equation}\label{3.64 libro Domenico}
	\int_{S^2} Y_{\ell_1 m_1} (x) Y_{\ell_2 m_2} (x) \overline{Y}_{\ell_3 m_3}(x) d\sigma(x)= \sqrt{\dfrac{(2\ell_1+1)(2\ell_2+1)}{4\pi (2\ell_3+1)}} C_{\ell_1 m_1 \ell_2 m_2}^{\ell_3 m_3} C_{\ell_1 0 \ell_2 0}^{\ell_3 0},
	\end{equation}
	for all $\ell_1,\ell_2,\ell_3,$
	with the convention that $C_{\ell_1 m_1 \ell_2 m_2}^{\ell_3 m_3}=0$ for those integers $\ell_1,\ell_2,\ell_3$ not satisfying the triangle conditions. 
	Replacing it in (\ref{integral}), one has
	
	\begin{equation}\label{eq}
	\begin{split}
	\bigg( \dfrac{4\pi}{2\ell+1}\bigg)^2 \sum_{\ell_1}\sum_{\ell_2} \sum_{m_1} \sum_{m_2}  b_{\ell_1;\eps}
	\sqrt{\dfrac{(2\ell+1)(2\ell_1+1)}{4\pi(2\ell+1)}} &C_{\ell m_1 \ell_1 0}^{\ell m_2} C_{\ell 0 \ell_1 0}^{\ell 0} \times\\& \times b_{\ell_2;\eps}    \sqrt{\dfrac{(2\ell+1)(2\ell_2+1)}{4\pi(2\ell+1)}} C_{\ell m_2 \ell_2 0}^{\ell m_1} C_{\ell 0 \ell_2 0}^{\ell 0} 
	\end{split}
	\end{equation}
	
	\begin{equation}\label{new}
=\bigg( \dfrac{4\pi}{2\ell+1}\bigg)^2 \dfrac{1}{4\pi} \sum_{\ell_1} b_{\ell_1;\eps} \sqrt{2{\ell_1}+1} C_{\ell 0 \ell_1 0}^{\ell 0} \sum_{\ell_2} b_{\ell_2;\eps}\sqrt{2{\ell_1}+1} C_{\ell 0 \ell_2 0}^{\ell 0} \sum_{m_1 m_2} C_{\ell m_1 \ell_1 0}^{\ell m_2} C_{\ell m_2 \ell_2 0}^{\ell m_1}
\end{equation}
	and then, this is the value of the left hand side in (\ref{smoothvar}).\\\\
	\textit{Step 2}. Now, we just exploit some properties of the Clebsch-Gordan coefficients to simplify the expression in (\ref{new}) and finally to prove that it is equal to the right hand side in (\ref{smoothvar}). Hence, Recalling that the mentioned coefficients are related to the Wigner 3j coefficients by the identities (see \cite{M e Peccati}, Section 3.5.3):
	\begin{equation}\label{3.67 libro Domenico}
	\begin{pmatrix} \ell_1 & \ell_2 & \ell_3 \\ m_1 &
	m_2 & m_3\\ 
	\end{pmatrix}
	= (-1)^{\ell_3+m_3} \dfrac{1}{\sqrt{2\ell_3+1}} C_{\ell_1 -m_1 \ell_2 -m_2}^{\ell_3 m_3}
	\end{equation}
	\begin{equation}\label{3.68 libro Domenico}
	C_{\ell_1 m_1 \ell_2 m_2}^{\ell_3 m_3}=(-1)^{\ell_1-\ell_2+m_3} \sqrt{2\ell_3+1}
	\begin{pmatrix} \ell_1 & \ell_2 & \ell_3 \\ m_1 &
	m_2 & -m_3\\ 
	\end{pmatrix},
	\end{equation}
	and using their permutation property of columns 
	\begin{equation}\label{5 pag 244}
	\begin{pmatrix} \ell_1 & \ell_2 & \ell_3 \\ m_1 &
	m_2 & m_3\\ 
	\end{pmatrix}
	= (-1)^{\ell_1+\ell_2+\ell_3}\begin{pmatrix} \ell_1 & \ell_3 & \ell_2 \\ m_1 &
	m_3 & m_2\\ 
	\end{pmatrix},
	\end{equation}
	it follows that
	\begin{equation}\label{un nome}
	\begin{split}
	C_{\ell m_1 \ell_1 0}^{\ell m_2}&=(-1)^{\ell-\ell_1+m_2} \sqrt{2\ell+1} \begin{pmatrix} \ell & \ell_1 & \ell \\ m_1 & 0 & -m_2 \end{pmatrix}\\&
	=(-1)^{\ell-\ell_1+m_2} \sqrt{2\ell+1} (-1)^{\ell+\ell_1+\ell} \begin{pmatrix} \ell & \ell & \ell_1 \\ m_1 & -m_2 & 0 \end{pmatrix}\\&=
	(-1)^{\ell+m_2+2\ell} \sqrt{2\ell+1} (-1)^{\ell_1+2\ell} \dfrac{1}{\sqrt{2\ell_1+1}} C_{\ell -m_1 \ell m_2}^{\ell_1 0}\\&
	=(-1)^{\ell+m_2+\ell_1} \dfrac{\sqrt{2\ell+1}}{\sqrt{2\ell_1+1}} C_{\ell -m_1 \ell m_2}^{\ell_1 0}
	\end{split}
	\end{equation}
	and
	$$C_{\ell m_2 \ell_2 0}^{\ell m_1}=(-1)^{\ell+m_1+\ell_2} \sqrt{2\ell+1} \dfrac{1}{\sqrt{2\ell_2+1}} C_{\ell -m_2 \ell m_1}^{\ell_2 0};$$
so equation (\ref{eq}) is equal to
	\begin{equation}\label{14}
	\begin{split}
	\bigg( &\dfrac{4\pi}{2\ell+1}\bigg)^2  \dfrac{1}{4\pi} \sum_{\ell_1} b_{\ell_1;\eps} \sqrt{2{\ell_1}+1} C_{\ell 0 \ell_1 0}^{\ell 0} \sum_{\ell_2} b_{\ell_2;\eps}\sqrt{2{\ell_1}+1} C_{\ell 0 \ell_2 0}^{\ell 0} \times\\&
	\sum_{m_1 m_2}  (-1)^{m_1+m_2} (-1)^{\ell_1+\ell_2} \dfrac{\sqrt{2\ell+1}}{\sqrt{2\ell_1+1}} \dfrac{\sqrt{2\ell+1}}{\sqrt{2\ell_2+1}} C_{\ell -m_1 \ell m_2}^{\ell_1 0} C_{\ell -m_2 \ell m_1}^{\ell_2 0} =\\&
	= \dfrac{4\pi}{2\ell+1} \sum_{\ell_1} b_{\ell_1;\eps}  C_{\ell 0 \ell_1 0}^{\ell 0} \sum_{\ell_2} b_{\ell_2;\eps} C_{\ell 0 \ell_2 0}^{\ell 0} (-1)^{\ell_1+\ell_2}\cdot
	\sum_{m_1 m_2}  (-1)^{m_1+m_2}  C_{\ell -m_1 \ell m_2}^{\ell_1 0} C_{\ell -m_2 \ell m_1}^{\ell_2 0}
	\end{split}
	\end{equation}
	and for the triangular condition
	$$m_1-m_2=0 \Rightarrow m_1=m_2$$
	and the unitary relation \cite{quantum theory}:
	
	\begin{equation}\label{unitary prop}
	\sum_{m_1 m_2 } C_{j_1 m_1 j_2 m_2 }^{j m} C_{j_1 m_1 j_2 m_2}^{j' m'}= \delta_{j}^{j'} \delta_{m}^{m'},
	\end{equation}
 (\ref{14}) yields
	
	\begin{equation}\label{eq1}
	\begin{split}
	\dfrac{4\pi}{2\ell+1} \bigg\{ \sum_{\ell_1} (-1)^{\ell_1 } b_{\ell_1;\eps} C_{\ell 0 \ell_1 0}^{\ell 0} \sum_{\ell_2} (-1)^{\ell_2} b_{\ell_2;\eps} C_{\ell 0 \ell_2 0}^{\ell 0} \bigg\} \delta_{\ell_1}^{\ell_2}.
	\end{split}
	\end{equation}
As in (\ref{un nome}) one has
	\begin{equation}
	\begin{split}
	C_{\ell 0 \ell_10}^{\ell 0}&=(-1)^{\ell-\ell_1} \sqrt{2\ell+1}  \begin{pmatrix} \ell & \ell_1 & \ell \\ 0 & 0 & 0 \end{pmatrix} \\&
	=(-1)^{\ell-\ell_1} \sqrt{2\ell+1} \mbox{ }(-1)^{2\ell+\ell_1} \begin{pmatrix} \ell & \ell & \ell_1 \\ 0 & 0 & 0 \end{pmatrix} \\&
	=(-1)^{\ell} \sqrt{2\ell+1} \mbox{ } (-1)^{\ell_1+2\ell} \dfrac{1}{\sqrt{2\ell_1+1}} C_{\ell0 \ell 0 }^{\ell_1 0}\\&
	=(-1)^{\ell+\ell_1} \dfrac{\sqrt{2\ell+1}}{\sqrt{2\ell_1+1}} C_{\ell0 \ell 0 }^{\ell_1 0}
	\end{split}
	\end{equation}
	and then (\ref{eq1}) is 
	\begin{equation}
	\begin{split}
	&\dfrac{4\pi}{2\ell+1} \bigg\{ \sum_{\ell_1} (-1)^{\ell_1} b_{\ell_1;\eps} (-1)^{\ell+\ell_1} \dfrac{\sqrt{2\ell+1}}{\sqrt{2\ell_1+1}}  C_{\ell 0 \ell 0}^{\ell_1 0} \sum_{\ell_2} b_{\ell_2;\eps} (-1)^{\ell_2} (-1)^{\ell+\ell_2} \dfrac{\sqrt{2\ell+1}}{\sqrt{2\ell_2+1}}  C_{\ell 0 \ell 0}^{\ell_2 0} \bigg\} \delta_{\ell_1}^{\ell_2}\\&
	=4\pi \sum_{\ell_1} b_{\ell_1;\eps} \dfrac{1}{\sqrt{2\ell_1+1}} C_{\ell 0 \ell 0}^{\ell_1 0} \sum_{\ell_1} \dfrac{1}{\sqrt{2\ell_1+1}} b_{\ell_1;\eps} C_{\ell 0 \ell 0}^{\ell_1 0}\\&
	=4\pi \bigg\{ \sum_{\ell_1} b_{\ell_1;\eps} \dfrac{1}{\sqrt{2\ell_1+1}} C_{\ell 0 \ell 0}^{\ell_1 0}\bigg\}^2 \\&
	=4\pi \sum_{\ell_1} b_{\ell_1;\eps}^2 \dfrac{1}{2\ell_1+1} \bigg(C_{\ell 0 \ell 0}^{\ell_1 0}\bigg)^2,
	\end{split}
	\end{equation}
	where the last step is due to the previous property (\ref{unitary prop}) with $m_1=m_2=m_3=0$; finally Lemma \ref{lemma1} is proven.
\end{proof}

\subsection{Proof of Lemma \ref{lemma2}}
\begin{proof}
	The variance in (\ref{smoothvar})	
	is bounded from below by a single term of the series in the right hand side of (\ref{smoothvar}), i.e.,
	$$8 \pi b_{\bar{\ell_1};\eps}^2 \dfrac{1}{2\bar{\ell_1}+1} \bigg(C_{\ell 0 \ell 0}^{\bar{\ell_1} 0}\bigg)^2,$$
	for a fixed $\bar{\ell_1}$ of the sum; for instance  $\bar{\ell}_1=0$, i.e.,
	$$\mbox{Var}\bigg[\int_{S^2} 1_{B;\eps}(x) H_2(T_\ell(x)) \,dx \bigg]=8 \pi \sum_{\ell_1} b_{\ell_1;\eps}^2 \dfrac{1}{2\ell_1+1} \bigg(C_{\ell 0 \ell 0}^{\ell_1 0}\bigg)^2 \geq 8 \pi b_{0;\eps}^2  \bigg(C_{\ell 0 \ell 0}^{0 0}\bigg)^2=8\pi b_{0;\eps}^2 \dfrac{1}{2\ell+1},$$
by the property
	\begin{equation}\label{1 pag 248}
	C_{\ell_1 m_1 \ell_2 m_2}^{ 00} =(-1)^{\ell_1-m_1} \dfrac{\delta_{\ell_1}^{\ell_2} \delta_{m_1}^{-m_2}}{\sqrt{2\ell_1+1}}
	\end{equation}
	(see \cite{quantum theory}).
	To find an upper bound, it is sufficient to recall that for any $\ell_1,\ell_2,\ell_3,$ 
	\begin{equation}
	\bigg| \begin{pmatrix} \ell_1 & \ell_2 & \ell_3 \\ m_1 &
	m_2 & m_3\\ 
	\end{pmatrix}
	\bigg| \leq [\max\{2\ell_1+1,2\ell_2+1,2\ell_3+1\}]^{-1/2}
	\end{equation}
	(see \cite{M e Peccati} p. 110) so that
	\begin{equation}\label{mod c}
	|C_{\ell_1 m_1 \ell_2 m_2}^{\ell_3 m_3}|\leq \sqrt{2\ell_3+1} [\max\{2\ell_1+1,2\ell_2+1,2\ell_3+1\}]^{-1/2}
	\end{equation}
and 
	then, it is easily seen that 
	$$8 \pi \sum_{\ell_1} b_{\ell_1;\eps}^2 \dfrac{1}{2\ell_1+1} \bigg(C_{\ell 0 \ell 0}^{\ell_1 0}\bigg)^2 \leq 8\pi \sum_{\ell_1} b_{\ell_1;\eps}^2 \dfrac{1}{2\ell_1+1} \dfrac{2\ell_1+1}{2\ell+1}= \dfrac{8\pi}{2\ell+1} \sum_{\ell_1} b_{\ell_1;\eps}^2.$$
	The series is finite by Remark \ref{norma}.
	In conclusion, (\ref{smoothvar}) is bounded above and below by
	\begin{equation}
	\begin{split}
	8\pi b_{0;\eps}^2 \dfrac{1}{2\ell+1} \leq 8 \pi \sum_{\ell_1} b_{\ell_1;\eps}^2 \dfrac{1}{2\ell_1+1} \bigg(C_{\ell 0 \ell 0}^{\ell_1 0}\bigg)^2 \leq \dfrac{8\pi}{2\ell+1} \sum_{\ell_1} b_{\ell_1;\eps}^2 \leq \dfrac{8\pi}{2\ell+1} m(\mathbb{S}^2)= \dfrac{8\pi}{2\ell+1} 4\pi
	\end{split}
	\end{equation}
	and in light of Remark \ref{remark b0}, the lemma is proved.
\end{proof}

\subsection{Proof of Proposition \protect\ref{4 cumulant}}

\begin{proof}
	The purpose here is to compute the fourth cumulant of $h_{\ell;2}^*(B)$, which, in view of the Diagram Formula (see Section 4.3 \cite{M e Peccati}, or \cite{M e W 2012}), is given by
	\begin{equation}  \label{cum4}
	\begin{split}
	cum_4(h^*_{\ell;2}(B))=\int_{\mathbb{S}^2}1_{B;\varepsilon}(x)&\int_{\mathbb{S}^2}1_{B;%
		\varepsilon}(z)\int_{\mathbb{S}^2} P_\ell(\langle x,y\rangle ) P_\ell(\langle
	y,z\rangle) 1_{B;\varepsilon}(y) \,dy  \\
	& \times \int_{\mathbb{S}^2} P_\ell(\langle z,w\rangle)P_\ell(\langle w,x\rangle)
	1_{B;\varepsilon}(w) \,dw \,dx \,dz;
	\end{split}%
	\end{equation}
	the idea is always to write the integral in terms of spherical harmonics and hence, of the Clebsch-Gordan coefficients. Then, the second step is to handle them in order to derive an expression with less parameters, namely equation (\ref{36}). In the third step we split the series given in (\ref{36}) to make neater some terms of the series. At this point, in step 4, we study the asymptotic behavior of all these terms proving, finally, the thesis of the lemma.\\\\
	\textit{Step 1}. Putting together (\ref{add formula}) and (\ref{fn caratteristica}) in (\ref{cum4}), we obtain four Gaunt integrals and  (\ref{3.64 libro Domenico}) implies that (\ref{cum4}) is equal to
	
	
	\begin{equation}\label{a22}
	\begin{split}
	\bigg(\frac{4\pi}{2\ell+1}\bigg)^4 \dfrac{1}{(4\pi)^2} &
	\sum_{\ell_1=-\ell}^{\ell} b_{\ell_1;\varepsilon} \sqrt{(2\ell_1+1)} C_{\ell
		0 \ell_1 0}^{\ell 0} \sum_{\ell_2=0}^{\infty} b_{\ell_2;\varepsilon} \sqrt{%
		(2\ell_2+1)} C_{\ell 0 \ell_2 0}^{\ell 0} \\
	& \sum_{\ell_3=-\ell}^{\ell} b_{\ell_3;\varepsilon} \sqrt{(2\ell_3+1)}
	C_{\ell 0 \ell_3 0}^{\ell 0} \sum_{\ell_4=-\ell}^{\ell}
	b_{\ell_4;\varepsilon} \sqrt{(2\ell_4+1)} C_{\ell 0 \ell_4 0}^{\ell 0} \\
	& \sum_{m_1=-\ell}^{\ell} \sum_{m_2=-\ell}^{\ell} \sum_{m_3=-\ell}^{\ell}
	\sum_{m_4=-\ell}^{\ell} C_{\ell m_1 \ell_10}^{\ell m_2}C_{\ell m_3
		\ell_20}^{\ell m_4} C_{\ell m_4 \ell_30}^{\ell m_1}C_{\ell m_2
		\ell_40}^{\ell m_3}.
	\end{split}%
	\end{equation}
	
	\textit{Step 2}. In this step we reduce the number of parameters. Indeed, the triangular condition implies that $m_1=m_2=m_3=m_4$, so that (\ref{a22}) is
	\begin{equation}  \label{33}
	\begin{split}
	= \bigg(\frac{4\pi}{2\ell+1}\bigg)^4 \dfrac{1}{(4\pi)^2} &
	\sum_{\ell_1=-\ell}^{\ell} b_{\ell_1;\varepsilon} \sqrt{(2\ell_1+1)} C_{\ell
		0 \ell_1 0}^{\ell 0} \sum_{\ell_2=0}^{\infty} b_{\ell_2;\varepsilon} \sqrt{%
		(2\ell_2+1)} C_{\ell 0 \ell_2 0}^{\ell 0} \\
	& \sum_{\ell_3=-\ell}^{\ell} b_{\ell_3;\varepsilon} \sqrt{(2\ell_3+1)}
	C_{\ell 0 \ell_3 0}^{\ell 0} \sum_{\ell_4=-\ell}^{\ell}
	b_{\ell_4;\varepsilon} \sqrt{(2\ell_4+1)} C_{\ell 0 \ell_4 0}^{\ell 0} \\
	& \sum_{m_1=-\ell}^{\ell} C_{\ell m_1 \ell_10}^{\ell m_1}C_{\ell m_1
		\ell_20}^{\ell m_1} C_{\ell m_1 \ell_30}^{\ell m_1}C_{\ell m_1
		\ell_40}^{\ell m_1}.
	\end{split}%
	\end{equation}
	Besides, for the symmetry properties (\ref{symmetry p 10} or \cite{quantum theory}), one has that
	\begin{equation*}
	C_{\ell m_1 \ell_1 0}^{\ell m_1}=(-1)^{\ell-m_1} \sqrt{\dfrac{2\ell+1}{%
			2\ell_1+1}} C_{\ell m_1 \ell -m_1}^{\ell_1 0},
	\end{equation*}
	and then
	\begin{equation}\label{newa22}
	\begin{split}
	\sum_{m_1=-\ell}^{\ell} &C_{\ell m_1 \ell_10}^{\ell m_1}C_{\ell m_1
		\ell_20}^{\ell m_1} C_{\ell m_1 \ell_30}^{\ell m_1}C_{\ell m_1
		\ell_40}^{\ell m_1} = \\
	& =\sum_{m_1=-\ell}^{\ell} (-1)^{4(\ell-m_1)} \sqrt{\dfrac{(2\ell+1)^4}{%
			(2\ell_1+1)(2\ell_2+1)(2\ell_3+1)(2\ell_4+1)}} C_{\ell m_1 \ell m_1}^{\ell_1
		0}C_{\ell m_1 \ell m_1}^{\ell_2 0} C_{\ell m_1 \ell m_1}^{\ell_3 0}C_{\ell
		m_1 \ell m_1}^{\ell_4 0} .
	\end{split}%
	\end{equation}
	By (\ref{2 pag 248}) and
	(\ref{20 p.260}) (\cite{quantum theory}), (\ref{newa22}) becomes
	\begin{equation}\label{a28}
	\begin{split}
	= \bigg(\frac{4\pi}{2\ell+1}\bigg)^2 & \sum_{\ell_1=-\ell}^{\ell}
	b_{\ell_1;\varepsilon} C_{\ell 0 \ell_1 0}^{\ell 0} \sum_{\ell_2=0}^{\infty}
	b_{\ell_2;\varepsilon} C_{\ell 0 \ell_2 0}^{\ell 0} \times\\
	& \sum_{\ell_3=-\ell}^{\ell} b_{\ell_3;\varepsilon} C_{\ell 0 \ell_3
		0}^{\ell 0} \sum_{\ell_4=-\ell}^{\ell} b_{\ell_4;\varepsilon} C_{\ell 0
		\ell_4 0}^{\ell 0} \times\\
	& \prod_{\ell_1,\ell_2,\ell_3,\ell_4} \sum_{k j} C_{\ell_3 0 \ell_4 0}^{k
		j}C_{\ell_2 0 \ell_1 0}^{k j} 
	\begin{Bmatrix}
	\ell & \ell & \ell_1 \\ 
	\ell & \ell & \ell_2 \\ 
	\ell_4 & \ell_3 & k%
	\end{Bmatrix}%
	,
	\end{split}%
	\end{equation}
	where the last symbol is the Wigner 9j coefficient (see \cite{quantum theory}).
	Then, since the triangular condition implies $j=0$, (\ref{a28}) gives
	\begin{equation}\label{a29}
	\begin{split}
	= \bigg(\frac{4\pi}{2\ell+1}\bigg)^2 & \sum_{\ell_1=-\ell}^{\ell}
	b_{\ell_1;\varepsilon} C_{\ell 0 \ell_1 0}^{\ell 0} \sum_{\ell_2=0}^{\infty}
	b_{\ell_2;\varepsilon} C_{\ell 0 \ell_2 0}^{\ell 0} \times\\
	& \sum_{\ell_3=-\ell}^{\ell} b_{\ell_3;\varepsilon} C_{\ell 0 \ell_3
		0}^{\ell 0} \sum_{\ell_4=-\ell}^{\ell} b_{\ell_4;\varepsilon} C_{\ell 0
		\ell_4 0}^{\ell 0} \times\\
	& \prod_{\ell_1 \ell_2 \ell_3 \ell_4 }\sum_{k} C_{\ell_3 0 \ell_4 0}^{k
		0}C_{\ell_2 0 \ell_1 0}^{k 0} 
	\begin{Bmatrix}
	\ell & \ell & \ell_1 \\ 
	\ell & \ell & \ell_2 \\ 
	\ell_4 & \ell_3 & k%
	\end{Bmatrix}.%
	\end{split}%
	\end{equation}
	In view of equation (\ref{symmetry p 10}), (\ref{a29}) reduces to
	\begin{equation}  \label{36}
	\begin{split}
	= (4\pi)^2 & \sum_{\ell_1=0}^{\infty} b_{\ell_1;\varepsilon} C_{\ell 0 \ell
		0}^{\ell_1 0} \sum_{\ell_2=0}^{\infty} b_{\ell_2;\varepsilon} C_{\ell 0 \ell
		0}^{\ell_2 0} \sum_{\ell_3=0}^{\infty} b_{\ell_3;\varepsilon} C_{\ell 0 \ell
		0}^{\ell_3 0} \sum_{\ell_4=0}^{\infty} b_{\ell_4;\varepsilon} C_{\ell 0 \ell
		0}^{\ell_4 0} \sum_{k} C_{\ell_3 0 \ell_4 0}^{k 0}C_{\ell_2 0 \ell_1 0}^{k
		0} 
	\begin{Bmatrix}
	\ell & \ell & \ell_1 \\ 
	\ell & \ell & \ell_2 \\ 
	\ell_4 & \ell_3 & k%
	\end{Bmatrix}%
	.
	\end{split}%
	\end{equation}
	
	\textit{Step 3}.
	In order to simplify the notation, we define this last expression as $A_{\ell,k}(\ell_1,\ell_2,\ell_3,\ell_4)$. We split it in different cases and we study them separately;
	hence, we rewrite (\ref{36}) as
	\begin{equation}  \label{quarto cum}
	\begin{split}
	= &A_{\ell,k}(\ell_{1},\ell_2,\ell_3,\ell_4)+ A_{\ell,k}(0,0,0,0)+
	A_{\ell,k}(0,\ell_2,\ell_3,\ell_4)+A_{\ell,k}(\ell_1,0,\ell_3,\ell_4)+A_{%
		\ell,k}(\ell_1,\ell_2,0,\ell_4)+ \\
	&A_{\ell,k}(\ell_1,\ell_2,\ell_3,0)+
	2A_{\ell,k}(0,0,\ell_3,\ell_4)+2A_{\ell,k}(0,\ell_2,0,\ell_4)+2A_{\ell,k}(0,%
	\ell_2,\ell_3,0)+2A_{\ell,k}(\ell_1,0,0,\ell_4)+ \\
	&2A_{\ell,k}(\ell_1,0,\ell_3,0)+2A_{\ell,k}(\ell_1,\ell_2,0,0),
	\end{split}
	\end{equation}
	where
	\begin{equation}
	\begin{split}
	A_{\ell,k}&(\ell_1,\ell_2,\ell_3,\ell_4):= \\
	&(4\pi)^2 \sum_{\ell_1=1}^{\infty} b_{\ell_1;\varepsilon} C_{\ell 0 \ell
		0}^{\ell_1 0} \sum_{\ell_2=1}^{\infty} b_{\ell_2;\varepsilon} C_{\ell 0 \ell
		0}^{\ell_2 0} \sum_{\ell_3=1}^{\infty} b_{\ell_3;\varepsilon} C_{\ell 0 \ell
		0}^{\ell_3 0} \sum_{\ell_4=1}^{\infty} b_{\ell_4;\varepsilon} C_{\ell 0 \ell
		0}^{\ell_4 0} \sum_{k} C_{\ell_3 0 \ell_4 0}^{k 0}C_{\ell_2 0 \ell_1 0}^{k
		0} 
	\begin{Bmatrix}
	\ell & \ell & \ell_1 \\ 
	\ell & \ell & \ell_2 \\ 
	\ell_4 & \ell_3 & k%
	\end{Bmatrix},
	\end{split}
	\end{equation}
	and so that
	\begin{equation*}
	A_{\ell,k}(0,0,0,0)=+(4\pi)^2 b_{0;\varepsilon}^4 (C_{\ell 0 \ell 0}^{00})^4
	C_{0000}^{00}C_{0000}^{00}%
	\begin{Bmatrix}
	\ell & \ell & 0 \\ 
	\ell & \ell & 0 \\ 
	0 & 0 & 0%
	\end{Bmatrix},
	\end{equation*}
	\begin{equation*}
	A_{\ell,k}(0,\ell_2,\ell_3,\ell_4)=(4\pi)^2 b_{0;\varepsilon} C_{\ell 0 \ell
		0}^{00} \sum_{\ell_2,\ell_3, \ell_4=1}^{\infty} b_{\ell_2;\varepsilon}
	C_{\ell 0 \ell 0}^{\ell_2 0} b_{\ell_3;\varepsilon} C_{\ell 0 \ell
		0}^{\ell_3 0} b_{\ell_4;\varepsilon} C_{\ell 0 \ell 0}^{\ell_4 0} \sum_{k}
	C_{\ell_3 0 \ell_4 0}^{k 0}C_{\ell_2 0 0 0}^{k 0} 
	\begin{Bmatrix}
	\ell & \ell & 0 \\ 
	\ell & \ell & \ell_2 \\ 
	\ell_4 & \ell_3 & k%
	\end{Bmatrix}%
	\end{equation*}
	(and similar expressions hold for $A_{\ell,k}(\ell_1,0,\ell_3,\ell_4),\mbox{ } A_{\ell,k}(\ell_1,\ell_2,0,\ell_4),\mbox{ } A_{\ell,k}(\ell_1,\ell_2,\ell_3,0)$),
	\begin{equation*}
	\begin{split}
	A_{\ell,k}(0,0,\ell_3,\ell_4)=(4\pi)^2 (b_{0,\varepsilon} C_{\ell 0 \ell
		0}^{00})^2 \sum_{\ell_3, \ell_2=1}^{\infty} b_{\ell_3;\varepsilon} &C_{\ell 0
		\ell 0}^{\ell_3 0} b_{\ell_2;\varepsilon} C_{\ell 0 \ell 0}^{\ell_2 0}\times\\& \times
	\sum_{k} C_{\ell_3 0 0 0}^{k 0}C_{\ell_2 0 0 0}^{k 0} 
	\begin{Bmatrix}
	\ell & \ell & 0 \\ 
	\ell & \ell & \ell_2 \\ 
	0 & \ell_3 & k%
	\end{Bmatrix}.
	\end{split}
	\end{equation*}
	Note that all the terms with three indexes among $\ell_1,\ell_2,\ell_3,\ell_4$
	equal to zero, are zero for the triangular condition, in fact, if we look at the term 
	\begin{equation*}
	3 (4\pi)^2 (b_{0,\varepsilon} C_{\ell 0 \ell 0}^{00})^3
	\sum_{\ell_3=1}^{\infty} b_{\ell_3;\varepsilon} C_{\ell 0 \ell 0}^{\ell_3 0}
	\sum_{k} C_{\ell_3 0 0 0}^{k 0}C_{0 0 0 0}^{k 0} 
	\begin{Bmatrix}
	\ell & \ell & 0 \\ 
	\ell & \ell & 0 \\ 
	0 & \ell_3 & k%
	\end{Bmatrix}%
	,
	\end{equation*}
	in the last sum, the Clebsch-Gordan coefficient $C_{\ell_30 00}^{k0}$ is
	different from zero only if $\ell_3=0$, but this value of $\ell_3$ is not considered in the
	current series.\\\\
	Now we simplify a bit these expressions.
	
	\begin{itemize}

		\item[-] As far as $A_{\ell,k}(0,0,0,0)$ is concerned, for the symmetry properties of the 9j symbols \cite{quantum
			theory}, 
		one has 
		\begin{equation*}
		\begin{Bmatrix}
		\ell & \ell & 0 \\ 
		\ell & \ell & 0 \\ 
		0 & 0 & 0%
		\end{Bmatrix}%
		=\dfrac{1}{2\ell+1}.
		\end{equation*} Therefore, remembering that $C_{0000}^{00}=1$ (from (\ref{2 pag 248})) and $%
		C_{\ell 0 \ell 0}^{0 0}=\dfrac{(-1)^\ell}{\sqrt{2\ell+1}}$ (from (\ref{1 pag
			248})), 
		\begin{equation}\label{stella}
		A_{\ell,k}(0,0,0,0)=(4\pi)^2 b_{0,\varepsilon}^4 \dfrac{1}{(2\ell+1)^3}.
		\end{equation}

		\item[-] Look at the term $A_{\ell;k}(0,\ell_2,\ell_3,\ell_4)$; for the
		triangular condition the only term in the sum in $k$ which does not vanish is $%
		k=\ell_2$ and for the symmetry properties of the Wigner 9j coefficients (\ref{2 p.357}) and
		for (\ref{1 p.357}),
		it follows that
		\begin{equation*}
		\begin{Bmatrix}
		\ell & \ell & 0 \\ 
		\ell & \ell & \ell_2 \\ 
		\ell_4 & \ell_3 & \ell_2%
		\end{Bmatrix}%
		=%
		\begin{Bmatrix}
		\ell_3 & \ell_4 & \ell_2 \\ 
		\ell & \ell & \ell_2 \\ 
		\ell & \ell & 0%
		\end{Bmatrix}%
		=\dfrac{(-1)^{\ell_4+\ell_2}}{[(2\ell_2+1)(2\ell+1)]^{1/2}} 
		\begin{Bmatrix}
		\ell_3 & \ell_4 & \ell_2 \\ 
		\ell & \ell & \ell%
		\end{Bmatrix}%
		.
		\end{equation*}
		Likewise,
		\begin{equation*}
		\begin{Bmatrix}
		\ell & \ell & \ell_1 \\ 
		\ell & \ell & 0 \\ 
		\ell_4 & \ell_3 & \ell_1%
		\end{Bmatrix}%
		=%
		\begin{Bmatrix}
		\ell & \ell & \ell_1 \\ 
		\ell_3 & \ell_4 & \ell_1 \\ 
		\ell & \ell & 0%
		\end{Bmatrix}
		=\dfrac{(-1)^{\ell_3+\ell_1}}{[(2\ell_1+1)(2\ell+1)]^{1/2}} 
		\begin{Bmatrix}
		\ell & \ell & \ell_1 \\ 
		\ell_4 & \ell_3 & \ell%
		\end{Bmatrix}%
		=
		\end{equation*}
		\begin{equation*}
		=\dfrac{(-1)^{\ell_3+\ell_1}}{[(2\ell_1+1)(2\ell+1)]^{1/2}} 
		\begin{Bmatrix}
		\ell_3 & \ell_4 & \ell_1 \\ 
		\ell & \ell & \ell%
		\end{Bmatrix}%
		,
		\end{equation*}
		where the last equality is due to the invariance under permutation of the Wigner
		6j coefficients (\ref{2 p.298}). 
		Similarly, 
		\begin{equation*}
		\begin{Bmatrix}
		\ell & \ell & \ell_1 \\ 
		\ell & \ell & \ell_2 \\ 
		\ell_4 & 0 & \ell_4%
		\end{Bmatrix}%
		=%
		\begin{Bmatrix}
		\ell & \ell_2 & \ell \\ 
		\ell & \ell_1 & \ell \\ 
		\ell_4 & \ell_4 & 0%
		\end{Bmatrix}%
		=\dfrac{(-1)^{\ell_2+\ell_4}}{[(2\ell_4+1)(2\ell+1)]^{1/2}} 
		\begin{Bmatrix}
		\ell & \ell_2 & \ell \\ 
		\ell_1 & \ell & \ell_4%
		\end{Bmatrix}%
		=
		\end{equation*}
		\begin{equation*}
		=\dfrac{(-1)^{\ell_2+\ell_4}}{[(2\ell_4+1)(2\ell+1)]^{1/2}} 
		\begin{Bmatrix}
		\ell_1 & \ell_2 & \ell_4 \\ 
		\ell & \ell & \ell%
		\end{Bmatrix}%
		\end{equation*}
		and 
		\begin{equation*}
		\begin{Bmatrix}
		\ell & \ell & \ell_1 \\ 
		\ell & \ell & \ell_2 \\ 
		0 & \ell_3 & \ell_3%
		\end{Bmatrix}%
		=%
		\begin{Bmatrix}
		\ell & \ell_2 & \ell \\ 
		\ell & \ell_1 & \ell \\ 
		\ell_4 & \ell_4 & 0%
		\end{Bmatrix}%
		=\dfrac{(-1)^{\ell_1+\ell_3}}{[(2\ell_3+1)(2\ell+1)]^{1/2}} 
		\begin{Bmatrix}
		\ell_2 & \ell & \ell \\ 
		\ell & \ell_1 & \ell_3%
		\end{Bmatrix}%
		\end{equation*}
		\begin{equation*}
		=\dfrac{(-1)^{\ell_1+\ell_3}}{[(2\ell_3+1)(2\ell+1)]^{1/2}} 
		\begin{Bmatrix}
		\ell_2 & \ell_1 & \ell_3 \\ 
		\ell & \ell & \ell%
		\end{Bmatrix}%
		.
		\end{equation*}
		
		Renaming the indexes of the similar terms with $\ell_2, \ell_3$, we can write
		\begin{equation}\label{key}
		\begin{split}
		&A_{\ell,k}(0,\ell_2,\ell_3,\ell_4)=A_{\ell,k}(\ell_2,0,\ell_3,\ell_4)=A_{%
			\ell,k}(\ell_2,\ell_3,0,\ell_4)=A_{\ell,k}(\ell_2,\ell_3,\ell_4,0)= \\
		& =(4\pi)^2 b_{0,\varepsilon} \dfrac{(-1)^{\ell}}{\sqrt{2\ell+1}}
		\sum_{\ell_2,\ell_3, \ell_4=1}^{\infty} b_{\ell_2;\varepsilon} C_{\ell 0
			\ell 0}^{\ell_2 0} b_{\ell_3;\varepsilon} C_{\ell 0 \ell 0}^{\ell_3 0}
		b_{\ell_4;\varepsilon} C_{\ell 0 \ell 0}^{\ell_4 0} C_{\ell_3 0 \ell_4
			0}^{\ell_2 0} \times \\& \times\dfrac{(-1)^{\ell_2+\ell_4}}{\sqrt{(2\ell+1)(2\ell_2+1)}}%
		\begin{Bmatrix}
		\ell_3 & \ell_4 & \ell_2 \\ 
		\ell & \ell & \ell%
		\end{Bmatrix}%
		= \\
		& =(4\pi)^2 b_{0,\varepsilon} \dfrac{(-1)^{\ell}}{2\ell+1}
		\sum_{\ell_2,\ell_3, \ell_4=1}^{\infty} b_{\ell_2;\varepsilon} C_{\ell 0
			\ell 0}^{\ell_2 0} b_{\ell_3;\varepsilon} C_{\ell 0 \ell 0}^{\ell_3 0}
		b_{\ell_4;\varepsilon} C_{\ell 0 \ell 0}^{\ell_4 0} C_{\ell_3 0 \ell_4
			0}^{\ell_2 0} \dfrac{(-1)^{\ell_2+\ell_4}}{\sqrt{(2\ell_2+1)}}%
		\begin{Bmatrix}
		\ell_3 & \ell_4 & \ell_2 \\ 
		\ell & \ell & \ell%
		\end{Bmatrix}%
		\end{split}%
		\end{equation}
		and since $C_{\ell0\ell0}^{\ell^{\prime }0}=0$ if $\ell^{\prime }$ is odd, the
		series only run on even indexes and this implies that $(-1)^{\ell_2+\ell_4}=1$.
		
		\item[-] Regarding $A_{\ell,k}(0,0,\ell_3,\ell_4)$, for the triangular condition,
		the only term of the sum in $k$ which is non-zero is $k=0$ and the symmetry
		properties of the 9j symbol (\ref{p.343}) and the relation (\ref{4 p.358})
		imply 
		\begin{equation}\label{permutation9j}
		\begin{Bmatrix}
		\ell & \ell & 0 \\ 
		\ell & \ell & 0 \\ 
		\ell_4 & \ell_3 & 0%
		\end{Bmatrix}%
		=%
		\begin{Bmatrix}
		\ell & \ell & \ell_4 \\ 
		\ell & \ell & \ell_3 \\ 
		0 & 0 & 0%
		\end{Bmatrix}%
		=\dfrac{\delta_{\ell_3}^{\ell_4}}{[(2\ell_4+1)(2\ell+1)^2]^{1/2}}.
		\end{equation}
		The same properties give
		\begin{equation*}
		\begin{Bmatrix}
		\ell & \ell & 0 \\ 
		\ell & \ell & \ell_2 \\ 
		\ell_2 & 0 & \ell_2%
		\end{Bmatrix}%
		=%
		\begin{Bmatrix}
		\ell & \ell_2 & \ell \\ 
		\ell & 0 & \ell \\ 
		\ell_2 & \ell_2 & 0%
		\end{Bmatrix}%
		=\dfrac{1}{[(2\ell_2+1)(2\ell+1)]^{1/2}} 
		\begin{Bmatrix}
		\ell & \ell_2 & \ell \\ 
		0 & \ell & \ell_2%
		\end{Bmatrix}%
		\end{equation*}
		and since if one of the argument is zero the value of the 6j symbol can be
		written explicitly as in (\ref{1 pag.299}),
		we have that
		\begin{equation*}
		\begin{Bmatrix}
		\ell & \ell & 0 \\ 
		\ell & \ell & \ell_2 \\ 
		\ell_2 & 0 & \ell_2%
		\end{Bmatrix}%
		=\dfrac{1}{[(2\ell_2+1)(2\ell+1)]^{1/2}} \dfrac{(-1)^{\ell_2}}{\sqrt{%
				(2\ell_2+1)(2\ell+1)}} =\dfrac{(-1)^{\ell_2}}{(2\ell_2+1)(2\ell+1)}.
		\end{equation*}
		Analogously, we get
		\begin{equation*}
		\begin{Bmatrix}
		\ell & \ell & 0 \\ 
		\ell & \ell & \ell_2 \\ 
		0 & \ell_2 & \ell_2%
		\end{Bmatrix}%
		= \dfrac{(-1)^{\ell_2}}{[(2\ell_2+1)(2\ell+1)]^{1/2}}%
		\begin{Bmatrix}
		\ell_2 & 0 & \ell_2 \\ 
		\ell & \ell & \ell \\ 
		\end{Bmatrix}
		=\dfrac{1}{(2\ell_2+1)(2\ell+1)},
		\end{equation*}
		\begin{equation*}
		\begin{Bmatrix}
		\ell & \ell & \ell_1 \\ 
		\ell & \ell & 0 \\ 
		\ell_1 & 0 & \ell_1%
		\end{Bmatrix}%
		= \dfrac{(-1)^{\ell_1}}{[(2\ell_1+1)(2\ell+1)]^{1/2}}%
		\begin{Bmatrix}
		\ell & \ell & \ell_1 \\ 
		\ell_1 & 0 & \ell \\ 
		\end{Bmatrix}
		=\dfrac{1}{(2\ell_1+1)(2\ell+1)},
		\end{equation*}
		\begin{equation*}
		\begin{Bmatrix}
		\ell & \ell & \ell_1 \\ 
		\ell & \ell & 0 \\ 
		0 & \ell_1 & \ell_1%
		\end{Bmatrix}%
		= \dfrac{1}{[(2\ell_1+1)(2\ell+1)]^{1/2}}%
		\begin{Bmatrix}
		\ell & \ell & \ell_1 \\ 
		0 & \ell_1 & \ell \\ 
		\end{Bmatrix}
		=\dfrac{(-1)^{\ell_1}}{(2\ell_1+1)(2\ell+1)}
		\end{equation*}
		and 
		\begin{equation*}
		\begin{Bmatrix}
		\ell & \ell & \ell_1 \\ 
		\ell & \ell & \ell_1 \\ 
		0 & 0 & 0%
		\end{Bmatrix}%
		=\dfrac{1}{[(2\ell_1+1)(2\ell+1)^2]^{1/2}}.
		\end{equation*}
		Same computations of (\ref{key}) lead to
		\begin{equation}
		\begin{split}
		&A_{\ell,k}(0,0,\ell_3,\ell_4)=A_{\ell,k}(0,\ell_3,0,\ell_4)
		=A_{\ell,k}(\ell_3,0,\ell_4,0)=A_{\ell,k}(\ell_3,\ell_4,0,0)= \\
		& =(4\pi)^2 \dfrac{b_{0,\varepsilon}^2}{2\ell+1} \sum_{\ell_3=1}^{\infty}
		b_{\ell_3;\varepsilon}^2 (C_{\ell 0 \ell 0}^{\ell_3 0})^2 \dfrac{%
			(-1)^{\ell_3}}{{(2\ell+1)(2\ell_3+1)}}\\&=(4\pi)^2 \dfrac{b_{0,\varepsilon}^2}{%
			(2\ell+1)^2} \sum_{\ell_3=1}^{\infty} b_{\ell_3;\varepsilon}^2 (C_{\ell 0
			\ell 0}^{\ell_3 0})^2 \dfrac{(-1)^{\ell_3}}{{(2\ell_3+1)}}
		\end{split}%
		\end{equation}
		and 
		\begin{equation*}
		A_{\ell,k}(0,\ell_2,\ell_3,0)=A_{\ell,k}(\ell_1,0,0,\ell_4)=(4\pi)^2 \dfrac{%
			b_{0,\varepsilon}^2}{(2\ell+1)^2} \sum_{\ell_3=1}^{\infty}
		b_{\ell_3;\varepsilon}^2 (C_{\ell 0 \ell 0}^{\ell_3 0})^2 \dfrac{1}{{%
				(2\ell_3+1)}};
		\end{equation*}
		as for the previous case, since $C_{\ell0\ell0}^{\ell^{\prime }0}=0$ if $\ell^{\prime }$ is odd, the
		series only run on even indexes and then $%
		(-1)^{\ell_3}=1$. 
	\end{itemize}
	Finally, equation (\ref{quarto cum}) reduces to
	\begin{equation}  \label{sum}
	\begin{split}
	cum_4&(h^*_{\ell;2})=A_{\ell,k}(\ell_{1},\ell_2,\ell_3,\ell_4)+
	A_{\ell,k}(0,0,0,0)+ 4A_{\ell,k}(0,\ell_2,\ell_3,\ell_4)+
	12A_{\ell,k}(0,0,\ell_3,\ell_4).
	\end{split}%
	\end{equation}
	\textit{Step 4}. The aim now is to understand the asymptotic behavior of expression (\ref{sum}) and to prove that it is $O\bigg(\dfrac{1}{\ell^3}\bigg)$. As in the previous step we study the summand in (\ref{sum}) separately.
	
	\begin{itemize}
		\item[-] First, we can note that, by the results of the second chaotic component, it is easily seen that the last summand of (\ref{sum}) is $O\bigg(\dfrac{1}{\ell^3}\bigg)$, as $\ell \rightarrow \infty$. The same holds for $A_{\ell,k}(0,0,0,0)$ directly from (\ref{stella}).
		\item[-] Concerning the third term of (\ref{sum}), because of (\ref{1 p110}) (see \cite{M e Peccati}), the following upper bound holds
		\begin{equation}\label{nonsaprei3}
		\bigg| 
		\begin{Bmatrix}
		\ell_3 & \ell_4 & \ell_2 \\ 
		\ell & \ell & \ell%
		\end{Bmatrix}
		\bigg| \leq \dfrac{1}{\sqrt{2\ell+1}} \min \bigg(\dfrac{1}{\sqrt{2\ell_2+1}}%
		, \dfrac{1}{\sqrt{2\ell_3+1}}, \dfrac{1}{\sqrt{2\ell_4+1}}\bigg)
		\end{equation}
		and from (\ref{mod c}), 
		\begin{equation}  \label{star0}
		|C_{\ell 0 \ell 0}^{\ell_2 0}| \leq \dfrac{\sqrt{2\ell_2+1}}{\sqrt{2\ell+1}},
		\end{equation}
		taking the absolute value, one has that $A_{\ell,k}(0,\ell_2,\ell_3,\ell_4)$ is bounded by
		\begin{equation}\label{third}
		\begin{split}
		&\leq (4\pi)^2 b_{0,\varepsilon} \dfrac{%
			(-1)^{\ell}}{2\ell+1} \sum_{\ell_2,\ell_3, \ell_4=1}^{\infty}
		b_{\ell_2;\varepsilon} C_{\ell 0 \ell 0}^{\ell_2 0} b_{\ell_3;\varepsilon}
		C_{\ell 0 \ell 0}^{\ell_3 0} b_{\ell_4;\varepsilon} C_{\ell 0 \ell
			0}^{\ell_4 0} C_{\ell_3 0 \ell_4 0}^{\ell_2 0} \dfrac{1}{\sqrt{(2\ell_2+1)}}%
		\begin{Bmatrix}
		\ell_3 & \ell_4 & \ell_2 \\ 
		\ell & \ell & \ell%
		\end{Bmatrix}
		\\
		& \leq (4\pi)^2 b_{0,\varepsilon} \dfrac{1}{2\ell+1} \sum_{\ell_2,\ell_3,
			\ell_4=1}^{\infty} |b_{\ell_2;\varepsilon} b_{\ell_3;\varepsilon}
		b_{\ell_4;\varepsilon}| \dfrac{\sqrt{2\ell_2+1}}{\sqrt{2\ell+1}}\dfrac{\sqrt{%
				2\ell_3+1}}{\sqrt{2\ell+1}}\dfrac{\sqrt{2\ell_4+1}}{\sqrt{2\ell+1}} 1\times \\
		& \dfrac{1}{\sqrt{(2\ell_2+1)}} \dfrac{1}{\sqrt{2\ell+1}} \min \bigg( \dfrac{%
			1}{\sqrt{2\ell_2+1}}, \dfrac{1}{\sqrt{2\ell_3+1}}, \dfrac{1}{\sqrt{2\ell_4+1}%
		}\bigg) \\
		& \leq (4\pi)^2 b_{0,\varepsilon} \dfrac{1}{(2\ell+1)^3} \sum_{\ell_2,%
			\ell_3, \ell_4=1}^{\infty} |b_{\ell_2;\varepsilon} b_{\ell_3;\varepsilon}
		b_{\ell_4;\varepsilon}| \sqrt{2\ell_3+1} \sqrt{2\ell_4+1}  \times\\
		&  \times\min \bigg( \dfrac{1}{\sqrt{2\ell_2+1}}, \dfrac{1}{\sqrt{2\ell_3+1}}, 
		\dfrac{1}{\sqrt{2\ell_4+1}}\bigg).
		\end{split}
		\end{equation}
		We have already discussed the absolute convergence of the series, which allows us to say that (\ref{third}) is $ O\bigg(\dfrac{1}{\ell^3}\bigg)$
		as $\ell \rightarrow \infty.$ \newline
		\item[-] It remains to study the term $A_{\ell,k}(\ell_1,\ell_2,\ell_3,\ell_4)$;%
		. 
		its absolute value can be bounded by
		\begin{equation}
		\begin{split}
		(4\pi)^2 \sum_{\ell_1=1}^{\infty} |b_{\ell_1;\varepsilon}| &
		\sum_{\ell_2=1}^{\infty} |b_{\ell_2;\varepsilon}| \sum_{\ell_3=1}^{\infty}
		|b_{\ell_3;\varepsilon}| \sum_{\ell_4=1}^{\infty} |b_{\ell_4;\varepsilon}| 
		\dfrac{\sqrt{(2\ell_1+1)(2\ell_2+1)(2\ell_3+1)(2\ell_4+1)}}{(2\ell+1)^2} \times\\
		&\bigg| \sum_{k} C_{\ell_3 0 \ell_4 0}^{k 0}C_{\ell_2 0 \ell_1 0}^{k 0} 
		\begin{Bmatrix}
		\ell & \ell & \ell_1 \\ 
		\ell & \ell & \ell_2 \\ 
		\ell_4 & \ell_3 & k%
		\end{Bmatrix}
		\bigg|.
		\end{split}%
		\end{equation}
		For equation (\ref{20 p.260}) one has that
		\begin{equation}\label{nonsaprei}
		\begin{split}
		\sum_{k} & C_{\ell_3 0 \ell_4 0}^{k 0}C_{\ell_2 0 \ell_1 0}^{k 0} 
		\begin{Bmatrix}
		\ell & \ell & \ell_1 \\ 
		\ell & \ell & \ell_2 \\ 
		\ell_4 & \ell_3 & k%
		\end{Bmatrix}%
		= \dfrac{1}{\sqrt{(2\ell_1+1)(2\ell_2+1)(2\ell_3+1)(2\ell_4+1)}}%
		(-1)^{\ell_1+\ell_2}\times \\
		& \times \sum_{s \sigma} (2s+1) \sqrt{2\ell_1+1}\sqrt{2\ell_3+1} C_{\ell_10 s
			\sigma}^{\ell_4 0} C_{\ell_3 0 s \sigma}^{\ell_2 0} 
		\begin{Bmatrix}
		\ell & \ell & \ell_1 \\ 
		\ell_4 & s & \ell%
		\end{Bmatrix}%
		\begin{Bmatrix}
		\ell & \ell & \ell_3 \\ 
		\ell_2 & s & \ell%
		\end{Bmatrix};
		\end{split}%
		\end{equation}
		the triangular condition implies $\sigma=0,$ so that (\ref{nonsaprei}) is
		\begin{equation}
		\begin{split}
		=&\dfrac{1}{\sqrt{(2\ell_1+1)(2\ell_2+1)(2\ell_3+1)(2\ell_4+1)}} \times \\
		& \times \sum_{s} (2s+1) \sqrt{2\ell_1+1}\sqrt{2\ell_3+1} C_{\ell_10 s
			0}^{\ell_4 0} C_{\ell_3 0 s 0}^{\ell_2 0} 
		\begin{Bmatrix}
		\ell & \ell & \ell_1 \\ 
		\ell_4 & s & \ell%
		\end{Bmatrix}%
		\begin{Bmatrix}
		\ell & \ell & \ell_3 \\ 
		\ell_2 & s & \ell%
		\end{Bmatrix}%
		\end{split}%
		\end{equation}
		and thanks to the fact that

		\begin{equation*}
		\bigg| 
		\begin{Bmatrix}
		\ell & \ell & \ell_1 \\ 
		\ell_4 & s & \ell%
		\end{Bmatrix}%
		\begin{Bmatrix}
		\ell & \ell & \ell_3 \\ 
		\ell_2 & s & \ell%
		\end{Bmatrix}
		\bigg| 
		\leq \dfrac{1}{2\ell+1} \times
		\end{equation*}
		\begin{equation*}
		\times \min \bigg( \dfrac{1}{\sqrt{2\ell_1+1}} ,\dfrac{1}{\sqrt{2s+1}} ,\dfrac{1}{%
			\sqrt{2\ell_4+1}} \bigg) \min \bigg( \dfrac{1}{\sqrt{2\ell_2+1}} ,\dfrac{1}{%
			\sqrt{2s+1}} ,\dfrac{1}{\sqrt{2\ell_3+1}} \bigg)
		\end{equation*}
		((\ref{1 p110}), see \cite{M e Peccati}), and (\ref{star0}), one gets that

		\begin{equation}\label{unaeq}
		\begin{split}
		A_{\ell,k}&(\ell_1,\ell_2,\ell_3,\ell_4) \leq (4\pi)^2 \dfrac{1}{(2\ell+1)^3} \sum_{\ell_1 \ell_2 \ell_3 \ell_4}\!\!\!\!
		|b_{\ell_1;\varepsilon}b_{\ell_2;\varepsilon}b_{\ell_3;\varepsilon}b_{%
			\ell_4;\varepsilon}| \sum_{s} (2s+1) \sqrt{2\ell_1+1} \sqrt{2\ell_3+1} \times\\&
		\times
		C_{\ell_1 0s0}^{\ell_4 0} C_{\ell_3 0s0}^{\ell_2 0}\min \bigg( \dfrac{1}{\sqrt{2\ell_1+1}} ,\dfrac{1}{\sqrt{2s+1}} ,\dfrac{1}{%
			\sqrt{2\ell_4+1}} \bigg) \min \bigg( \dfrac{1}{\sqrt{2\ell_2+1}} ,\dfrac{1}{%
			\sqrt{2s+1}} ,\dfrac{1}{\sqrt{2\ell_3+1}} \bigg)  \\
		\end{split}%
		\end{equation}
		and since
		\begin{equation*}
		\min \bigg( \dfrac{1}{2\ell_1+1} ,\dfrac{1}{\sqrt{2s+1}} ,\dfrac{1}{\sqrt{%
				2\ell_4+1}} \bigg)\leq 1
		\end{equation*}
		and
		\begin{equation*}
		\min \bigg( \dfrac{1}{\sqrt{2\ell_2+1}} ,\dfrac{1}{\sqrt{2s+1}} ,\dfrac{1}{%
			\sqrt{2\ell_3+1}} \bigg)\leq 1,
		\end{equation*}
		then (\ref{unaeq}) is bounded by
		\begin{equation}
		\begin{split}
		\leq (4\pi)^2 \dfrac{1}{(2\ell+1)^3} \sum_{\ell_1 \ell_2 \ell_3 \ell_4}
		|b_{\ell_1;\varepsilon}b_{\ell_2;\varepsilon}b_{\ell_3;\varepsilon}b_{%
			\ell_4;\varepsilon}| \bigg|\sum_{s} (2s+1) \sqrt{2\ell_1+1} \sqrt{2\ell_3+1} C_{\ell_1 0s0}^{\ell_4 0} C_{\ell_3 0s0}^{\ell_2 0} \bigg|.
		\end{split}
		\end{equation}
		In view of the Cauchy-Schwarz inequality, it follows that 
		\begin{equation}\label{nonsaprei2}
		\sum_{s} (2s+1)C_{\ell_1 0s0}^{\ell_4 0} C_{\ell_3 0s0}^{\ell_2 0} \leq %
		\bigg(\sum_{s} (\sqrt{2s+1} C_{\ell_1 0s0}^{\ell_4 0})^2\bigg)^{1/2}\bigg(%
		\sum_{s} (\sqrt{2s+1} C_{\ell_3 0s0}^{\ell_2 0})^2\bigg)^{1/2}
		\end{equation}
		and since
		$\sum_{\ell=|\ell_2-\ell_1|}^{\ell_2+\ell_1} (C_{\ell_10\ell_20}^{\ell 0})^2
		=1,$
		and the permutations properties  \begin{equation*}
		1=\sum_\ell (C_{\ell_10 \ell_2 0}^{\ell 0})^2=\sum_\ell \dfrac{2\ell+1}{%
			2\ell_2+1} (C_{\ell_10 \ell 0}^{\ell_2 0})^2,
		\end{equation*} it entails that 
		\begin{equation*}
		\sum_\ell (2\ell+1)(C_{\ell_10 \ell 0}^{\ell_2 0})^2=2\ell_2+1
		\end{equation*} 
		and then, the right hand side of (\ref{nonsaprei2}) is equal to
		\begin{equation*}
		= \sqrt{2\ell_4+1}\sqrt{2\ell_2+1}.
		\end{equation*}
		Eventually, $A_{\ell,k}(\ell_1,\ell_2,\ell_3,\ell_4)$ is smaller than
		\begin{equation}
		(4\pi)^2 \sum_{\ell_1=1}^{\infty} b_{\ell_1;\varepsilon} C_{\ell 0 \ell
			0}^{\ell_1 0} \sum_{\ell_2=1}^{\infty} b_{\ell_2;\varepsilon} C_{\ell 0 \ell
			0}^{\ell_2 0} \sum_{\ell_3=1}^{\ell} b_{\ell_3;\varepsilon} C_{\ell 0 \ell
			0}^{\ell_3 0} \sum_{\ell_4=1}^{\infty} b_{\ell_4;\varepsilon} C_{\ell 0 \ell
			0}^{\ell_4 0} \sum_{k} C_{\ell_3 0 \ell_4 0}^{k 0}C_{\ell_2 0 \ell_1 0}^{k
			0} 
		\begin{Bmatrix}
		\ell & \ell & \ell_1 \\ 
		\ell & \ell & \ell_2 \\ 
		\ell_4 & \ell_3 & k%
		\end{Bmatrix}%
		\end{equation}
		\begin{equation*}
		\leq (4\pi)^2 \dfrac{1}{(2\ell+1)^3} \sum_{\ell_1 \ell_2 \ell_3 \ell_4}
		|b_{\ell_1;\varepsilon}b_{\ell_2;\varepsilon}b_{\ell_3;\varepsilon}b_{%
			\ell_4;\varepsilon}| \sqrt{2\ell_2+1}\sqrt{2\ell_4+1} \sqrt{2\ell_1+1}\sqrt{%
			2\ell_3+1}
		\end{equation*}
		and since the series are absolutely convergent, the proof is completed.
	\end{itemize}
\end{proof}
\begin{oss}A detailed
	investigation for the asymptotic behavior of Clebsch-Gordan coefficients and bounds like (\ref{nonsaprei3}) and (\ref{nonsaprei})
	can be found also in \cite{M2006} and \cite{M2008}.
\end{oss}

\section{The Clebsch-Gordan coefficients}\label{C-G Appendix}
For completeness, in this Appendix we recall some basic facts and properties about the Clebsch-Gordan coefficients, which we used in our proofs above (we refer to \cite{quantum theory} for further properties and details).\\
The Clebsch-Gordan coefficients are important tools for the evaluation of multiple integrals of spherical harmonics.
For $SO(3)$ they are defined as the set $\{ C_{\ell_1 m_1 \ell_2 m_2}^{\ell_3 m_3} \}$ of the elements of the unitary matrices $C_{\ell_1 \ell_2},$ (Chap 2.4.2 \cite{M e Peccati}); the Clebsch-Gordan coefficients vanish unless the Triangular condition $$|\ell_1-\ell_2|\leq \ell_3 \leq \ell_1+\ell_2,$$
and the equation $$m_1+m_2=m_3$$ are satisfied.\\
The following orthogonal conditions hold (\cite{quantum theory}):

\begin{equation}\label{unitary 1}
\sum_{m_1 m_2} C_{\ell_1 m_1 \ell_2 m_2}^{\ell_3 m_3} C_{\ell_1 m_1 \ell_2 m_2}^{\ell_3' m_3'}=\delta_{\ell_3}^{\ell_3'} \delta_{m_3}^{m_3'},
\end{equation}

\begin{equation}\label{unitary 2}
\sum_{\ell m} C_{\ell_1 m_1 \ell_2 m_2}^{\ell m} C_{\ell_1 m'_1 \ell_2 m'_2}^{\ell m}=\delta_{m_1}^{m'_1} \delta_{m_2}^{m_2'}.
\end{equation}
For $m_1+m_2+m_3=0$, an analytic expression is known:
\begin{equation}
\begin{split}
C_{\ell_1 m_1 \ell_2 m_2}^{\ell_3 -m_3}
&:= (-1)^{\ell_1+m_1} \sqrt{2\ell_3+1} \bigg[ \frac{(\ell_1+\ell_2-\ell_3)!(\ell_1-\ell_2+\ell_3)!(\ell_1-\ell_2+\ell_3)!}{(\ell_1+\ell_2+\ell_3+1)!} \bigg]^{1/2}\\&
\times \bigg[ \frac{(\ell_3+m_3)!(\ell_3-m_3)!}{(\ell_1+m_1)!(\ell_1-m_1)!(\ell_2+m_2)!(\ell_2-m_2)!} \bigg]^{1/2}\\&
\times \sum_{z} \frac{(-1)^z(\ell_2+\ell_3+m_1-z)!(\ell_1-m_1+z)!}{z!(\ell_2+\ell_3-\ell_1-z)!(\ell_3+m_3-z)!(\ell_1-\ell_2-m_3+z)!}
\end{split}
\end{equation}
where the summation runs over all $z$'s such that the factorials are non-negative. When $m_1=m_2=m_3=0,$ this expression becomes simpler

\begin{equation}\label{expression}
C_{\ell_1 0 \ell_2 0 }^{\ell_3 0}=\begin{cases} 0, & \\ \mbox{ }\mbox{ }\mbox{ }\mbox{ }\mbox{ }\mbox{ }\mbox{ }\mbox{ }\mbox{ }\mbox{ }\mbox{ }\mbox{ }\mbox{ }\mbox{ }\mbox{ }\mbox{ }\mbox{ }\mbox{ }\mbox{ }\mbox{ }\mbox{ }\mbox{ }\mbox{ }\mbox{ }\mbox{ }\mbox{ }\mbox{ }\mbox{ }\mbox{ }\mbox{ }\mbox{ }\mbox{ for } \ell_1+\ell_2+\ell_3 \mbox{ odd} \\ \frac{(-1)^{\frac{\ell_1+\ell_2-\ell_3}{2}} [(\ell_1+\ell_2+\ell_3)/2]!}{[(\ell_1+\ell_2-\ell_3)/2]![(\ell_1-\ell_2+\ell_3)/2]![(-\ell_1+\ell_2+\ell_3)/2]!} \big\{ \frac{(\ell_1+\ell_2-\ell_3)!(\ell_1-\ell_2+\ell_3)!(-\ell_1+\ell_2+\ell_3)!}{(\ell_1+\ell_2+\ell_3+1)!}  \big\}^{1/2}, &\\ \mbox{ }\mbox{ }\mbox{ }\mbox{ }\mbox{ }\mbox{ }\mbox{ }\mbox{ }\mbox{ }\mbox{ }\mbox{ }\mbox{ }\mbox{ }\mbox{ }\mbox{ }\mbox{ }\mbox{ }\mbox{ }\mbox{ }\mbox{ }\mbox{ }\mbox{ }\mbox{ }\mbox{ }\mbox{ }\mbox{ }\mbox{ }\mbox{ }\mbox{ }\mbox{ }\mbox{ }\mbox{ for } \ell_1+\ell_2+\ell_3 \mbox{ even} 
\end{cases}
\end{equation}
We recall the following basic property.
\begin{itemize}
	\item Symmetry Properties:

\begin{equation}\label{symmetry p 10}
\begin{split}
C_{\ell_1 m_1 \ell_2 m_2}^{\ell_3 m_3}&=(-1)^{\ell_1+\ell_2-\ell_3} C_{\ell_2 m_2 \ell_1 m_1}^{\ell_3 m_3}=(-1)^{\ell_1-m_1} \sqrt{\dfrac{2\ell_3+1}{2\ell_2+1}} C_{\ell_1 m_1 \ell_3 -m_3}^{\ell_2 -m_2}\\&
=(-1)^{\ell_1-m_1} \sqrt{\dfrac{2\ell_3+1}{2\ell_2+1}} C_{\ell_3 m_3 \ell_1 -m_1}^{\ell_2 m_2}
(-1)^{\ell_2 m_2} \sqrt{\dfrac{2\ell_3+1}{2\ell_1+1}} C_{\ell_3 -m_3 \ell_2 m_2}^{\ell_1 -m_1}\\&
=(-1)^{\ell_2 m_2} \sqrt{\dfrac{2\ell_3+1}{2\ell_1+1}} C_{\ell_2 -m_2 \ell_3 m_3}^{\ell_1 m_1}
\end{split}
\end{equation}

\begin{equation}\label{symmetry p 11}
C_{\ell_1 m_1 \ell_2 m_2}^{\ell_3 m_3}=(-1)^{\ell_1+\ell_2-\ell_3} C_{\ell_1 -m_1 \ell_2 -m_2}^{\ell_3 -m_3}
\end{equation}


\end{itemize}


\subsection{Wigner 3j coefficients}
Wigner 3j coefficients are related to the Clebsch-Gordan coefficients by the identities

\begin{equation}\label{3.67 libro Domenico}
\begin{pmatrix} \ell_1 & \ell_2 & \ell_3 \\ m_1 &
m_2 & m_3\\ 
\end{pmatrix}
= (-1)^{\ell_3+m_3} \dfrac{1}{\sqrt{2\ell_3+1}} C_{\ell_1 -m_1 \ell_2 -m_2}^{\ell_3 m_3}
\end{equation}
\begin{equation}\label{3.68 libro Domenico}
C_{\ell_1 m_1 \ell_2 m_2}^{\ell_3 m_3}=(-1)^{\ell_1-\ell_2+m_3} \sqrt{2\ell_3+1}
\begin{pmatrix} \ell_1 & \ell_2 & \ell_3 \\ m_1 &
m_2 & -m_3\\ 
\end{pmatrix}.
\end{equation}
From \cite{M e Peccati}, we have that for any $\ell_1,\ell_2,\ell_3,$ the following upper bound holds
\begin{equation}
\bigg| \begin{pmatrix} \ell_1 & \ell_2 & \ell_3 \\ m_1 &
m_2 & m_3\\ 
\end{pmatrix}
 \bigg| \leq [\max\{2\ell_1+1,2\ell_2+1,2\ell_3+1\}]^{-1/2},
\end{equation}
then
\begin{equation}\label{mod c}
|C_{\ell_1 m_1 \ell_2 m_2}^{\ell_3 m_3}|\leq \sqrt{2\ell_3+1} [\max\{2\ell_1+1,2\ell_2+1,2\ell_3+1\}]^{-1/2}
\end{equation}
As the Clebsch-Gordan coefficients they satisfy some symmetry properties; such as
\begin{equation}
\begin{pmatrix} \ell_1 & \ell_2 & \ell_3 \\ m_1 &
m_2 & m_3\\ 
\end{pmatrix}
= (-1)^{\ell_1+\ell_2+\ell_3}\begin{pmatrix} \ell_1 & \ell_2 & \ell_3 \\ -m_1 &
-m_2 & -m_3\\ 
\end{pmatrix}.
\end{equation}
For special values of the arguments, namely if $\ell_3=0 \mbox{ or } \ell_2=0$, one has explicit forms of these coefficients:
\begin{equation}\label{1 pag 248}
C_{\ell_1 m_1 \ell_2 m_2}^{ 00} =(-1)^{\ell_1-m_1} \dfrac{\delta_{\ell_1}^{\ell_2} \delta_{m_1}^{-m_2}}{\sqrt{2\ell_1+1}}
\end{equation}
and 
\begin{equation}\label{2 pag 248}
C_{\ell_1 m_1 0 0}^{ \ell_3 m_3} = \delta_{\ell_1}^{\ell_3} \delta_{m_1}^{m_3}  ;
\end{equation}
for details see again \cite{quantum theory}.
Another property, involved in our computations is the sum of the products of four Clebsch-Gordan Coefficients:
\begin{equation}\label{20 p.260}
\sum_{\beta \gamma \epsilon \varphi} C_{b \beta c \gamma }^{a \alpha} C_{e \epsilon f \varphi}^{d \delta} C_{e \epsilon b \beta}^{g \eta} C_{f \varphi c \gamma }^{j \mu} =(-1)^{a-b+c+d+e-f} \sum_{s \sigma} \prod_{s s a g} C_{a \alpha s \sigma}^{j \mu} C_{g \eta s \sigma}^{d \delta} \begin{Bmatrix} b & c & a \\ j &
s & f\\ 
\end{Bmatrix}  \begin{Bmatrix} b & e & g \\ d &
s & f\\ 
\end{Bmatrix}
\end{equation}
$$=\prod_{a d gj} \sum_{k i} C_{g \eta j-\mu}^{k i } C_{d \delta a \alpha }^{k i} \begin{Bmatrix} c & b & a \\ f &
e & d \\ j & g&k \\ 
\end{Bmatrix}$$
(see Section \ref{9j} for the last symbol).
\subsection{Wigner 6j coefficients}
The 6j symbol is invariant under any permutation of its columns or under interchange of the upper and lower arguments in each of any two columns, the formulae we used in the paper are the following:
\begin{equation}\label{2 p.298}
\begin{split}
&\begin{Bmatrix} a & b & c \\ d & e & f \\ \end{Bmatrix}=
\begin{Bmatrix} a & e & f \\ d & b & c \\ \end{Bmatrix} 
=\begin{Bmatrix} e & d & c \\ b & a & f \\ \end{Bmatrix}=
\begin{Bmatrix} d & b & f \\ a & e & c \\ \end{Bmatrix}
\end{split}
\end{equation}
and the following upper bound holds (\cite{M e Peccati}, Section 4.5.4):
\begin{equation}\label{1 p110}
 \bigg| \begin{Bmatrix} a & b & c \\ d &
e & f \\  
\end{Bmatrix}\bigg| \leq \min \bigg( \dfrac{1}{\sqrt{(2c+1)(2f+1)}}, \dfrac{1}{\sqrt{(2a+1)(2d+1)}},\dfrac{1}{\sqrt{(2b+1)(2e+1)}} \bigg)
\end{equation}
When one of the arguments is equal to zero, their expression reduces to
\begin{equation}\label{1 pag.299}
\begin{split}
& \begin{Bmatrix} a & b & c \\ 0 &e & f \\ \end{Bmatrix}= (-1)^{a+b+e} \dfrac{\delta_{b}^{f} \delta_{c}^{e}}{\sqrt{(2b+1)(2c+1)}}, \\&
\begin{Bmatrix} a & 0 & c \\ d &e & f \\ \end{Bmatrix} = (-1)^{a+d+e} \dfrac{\delta_{a}^{c} \delta_{d}^{f}}{\sqrt{(2a+1)(2d+1)}} ,\\&
\begin{Bmatrix} a & b & c \\ d &0 & f \\ \end{Bmatrix}= (-1)^{a+b+d} \dfrac{\delta_{a}^{f} \delta_{c}^{d}}{\sqrt{(2a+1)(2c+1)}}.
\end{split}
\end{equation}

\subsection{Wigner 9j coefficients}\label{9j}
Similarly to the Wigner 6j coefficients, when one of the arguments is equal to zero, they have an easier expression
\begin{equation}\label{1 p.357}
\begin{Bmatrix} a & b & c \\ d &
e & f \\ g & h & 0 \\ 
\end{Bmatrix}=\delta_{c}^{f} \delta_{g}^{h} \dfrac{(-1)^{b+c+d+g}}{[(2c+1)(2g+1)]^{1/2}}  \begin{Bmatrix} a & b & c \\ e &
d & g \\ 
\end{Bmatrix}
\end{equation}
Using symmetry properties, we get
\begin{equation}\label{2 p.357}
\begin{split}
\begin{Bmatrix} 0 & c & c \\ g &
e & b \\ g & d & a \\\end{Bmatrix}& =  \begin{Bmatrix} c& 0 & c \\ d & g & a \\ e & g & b \\  \end{Bmatrix}=  \begin{Bmatrix} g& g & 0 \\ e & d & c \\ b & a & c \\  \end{Bmatrix}= \begin{Bmatrix} g& b & e \\ 0 & c & c \\ g & a & d \\  \end{Bmatrix}= \begin{Bmatrix} a& g & d \\ c & 0 & c \\ b & g & e \\  \end{Bmatrix}= \begin{Bmatrix} b& a & c \\ g & g & 0 \\ e & d & c \\  \end{Bmatrix}= \\&= \begin{Bmatrix} c& e & d \\ c & b & a \\ 0 & g & g \\  \end{Bmatrix}= \begin{Bmatrix} d& c & e \\ a & c & b \\ g & 0 & g \\  \end{Bmatrix}= \begin{Bmatrix} a& b & c \\ d & e & c \\ g & g & 0 \\  \end{Bmatrix} =\dfrac{(-1)^{b+d+c+g}}{[(2c+1)(2g+1)]^{1/2}}  \begin{Bmatrix} a& b & c \\ e & d & g \\   \end{Bmatrix}
\end{split}
\end{equation}

\begin{equation}\label{4 p.358}
\begin{Bmatrix} a & b & c \\ d &
e & f \\ 0 & 0&0 \\ 
\end{Bmatrix}= \dfrac{\delta_{a}^{d} \delta_{b}^{e} \delta_{c}^{f}}{[(2a+1)(2b+1)(2c+1)]^{1/2}};
\end{equation}
to get all the symmetry properties see \cite{quantum theory}; the one used in this paper is
\begin{equation}\label{p.343}
\begin{Bmatrix} a & b & c \\ d &
e & f \\ g& h&j \\ 
\end{Bmatrix}=  \begin{Bmatrix} a & d & g \\ b &
e & h \\ c& f&j \\ 
\end{Bmatrix}.
\end{equation}

\end{document}